\documentclass[11pt]{amsart}
\usepackage{amscd,amsmath,amssymb,amsfonts,verbatim}
\usepackage[cmtip, all]{xy}
\usepackage{MnSymbol}
\usepackage{comment}

\setlength{\textwidth}{5.8in}             
\setlength{\textheight}{9.2in}
\setlength{\topmargin}{-0.0in}

\setlength{\oddsidemargin}{.25in}
\setlength{\evensidemargin}{.25in}


%

\newtheorem{thm}{Theorem}[section]
\newtheorem{prop}[thm]{Proposition}
\newtheorem{lem}[thm]{Lemma}
\newtheorem{cor}[thm]{Corollary}

\theoremstyle{definition}
\newtheorem{ques}[thm]{Question}

\newtheorem{defn}[thm]{Definition}
\theoremstyle{remark}
\newtheorem{remk}[thm]{Remark}
\newtheorem{remks}[thm]{Remarks}

\newtheorem{exm}[thm]{Example}
\newtheorem{exms}[thm]{Examples}
\newtheorem{notat}[thm]{Notation}
\numberwithin{equation}{section}

{\hfill$\square$\end{defn}}
{\hfill$\square$\end{remk}}
{\hfill$\square$\end{remks}}
{\hfill$\square$\end{exm}}
{\hfill$\square$\end{exms}}
{\hfill$\square$\end{notat}}

\newcommand{\thmref}{Theorem~\ref}
\newcommand{\propref}{Proposition~\ref}
\newcommand{\corref}{Corollary~\ref}

\newcommand{\lemref}{Lemma~\ref}

\newcommand{\secref}{Section~\ref}

\newcommand{\sF}{{\mathcal F}}

\newcommand{\sI}{{\mathcal I}}

\newcommand{\sK}{{\mathcal K}}
\newcommand{\sL}{{\mathcal L}}

\newcommand{\sO}{{\mathcal O}}

\newcommand{\sR}{{\mathcal R}}

\newcommand{\sU}{{\mathcal U}}

\newcommand{\sZ}{{\mathcal Z}}

\renewcommand{\P}{{\mathbb P}}

\newcommand{\Z}{{\mathbb Z}}

\newcommand{\fm}{{\mathfrak m}}

\newcommand{\Ker}{{\rm Ker}}

\newcommand{\CH}{{\rm CH}}

\newcommand{\surj}{\twoheadrightarrow}
\newcommand{\inj}{\hookrightarrow}
\newcommand{\red}{{\rm red}}

\newcommand{\Spec}{{\rm Spec \,}}
\newcommand{\sing}{{\rm sing}}

\newcommand{\divf}{{\rm div}}

\newcommand{\Sch}{{\operatorname{\mathbf{Sch}}}}

\newcommand{\Sm}{{\mathbf{Sm}}}

\newcommand{\cyc}{{\operatorname{\rm cyc}}}

\newcommand{\ds}{{/\kern-3pt/}}

\newcommand{\ov}{\overline}

\renewcommand{\dim}{\text{\rm dim}}

\newcommand{\tuborg}{\left\{\begin{array}{ll}}
\newcommand{\sluttuborg}{\end{array}\right.}

\newcommand{\zar}{{\rm zar}}
\newcommand{\nis}{{\rm nis}}

\newcommand{\reg}{{\rm reg}}

\newcommand{\wt}{\widetilde}




\newcounter{elno}

\newcounter{elno-abc}   
\newenvironment{listabc}{
                         \begin{list}{\alph{elno-abc})
                                     }{\usecounter{elno-abc}}
                      }{
                         \end{list}}
\newcounter{elno-abc-prime}   
\newenvironment{listabcprime}{
                         \begin{list}{\alph{elno-abc-prime}')
                                     }{\usecounter{elno-abc-prime}}
                      }{
                         \end{list}}

\begin{document}
\title{A decomposition theorem for 0-cycles and applications}
\author{Rahul Gupta, Amalendu Krishna, Jitendra Rathore}
\address{Fakult\"at f\"ur Mathematik, Universit\"at Regensburg, 
93040, Regensburg, Germany.}
\email{Rahul.Gupta@mathematik.uni-regensburg.de}
\address{Department of Mathematics, Indian Institute of Science,  
Bangalore, 560012, India.}
\email{amalenduk@iisc.ac.in}
\address{School of Mathematics, Tata Institute of Fundamental Research, Homi Bhabha
  Road, Mumbai-400005, India.}
\email{jitendra@math.tifr.res.in}


\keywords{$K$-theory, cycles with modulus, cycles on singular schemes}        

\subjclass[2020]{Primary 14C25; Secondary 14F42, 19E15}

\maketitle

\begin{quote}\emph{Abstract.} 
  We prove a decomposition theorem for the cohomological Chow group of 0-cycles on the
  double of a quasi-projective $R_1$-scheme over a field along a closed subscheme, in
  terms of the Chow groups, with and without modulus, of the scheme.
  This yields a significant generalization of the decomposition theorem of
  Binda-Krishna. As applications, we prove a moving lemma for Chow groups with modulus
  and an analogue of
  Bloch's formula for 0-cycles with modulus on singular surfaces. The latter extends a
  previous result of Binda-Krishna-Saito.
  \end{quote}
\setcounter{tocdepth}{1}
\tableofcontents

\section{Introduction}\label{sec:Intro}
\subsection{Background}\label{sec:Background}
Let $k$ be a field and $X$ an integral quasi-projective $R_1$-scheme
(i.e., a scheme which is  regular in codimension one) of dimension $d \ge 1$ over $k$.
Let $D \subset X$ be a closed subscheme of 
codimension one. Let $S_X$ denote the join of two copies of $X$ along $D$ (see
\S~\ref{sec:double}).
Let $\CH_0(X|D)$ denote the Chow group of 0-cycles with modulus
on $X$ (see \cite[Definition~1.6]{Kerz-Saito})
and let $\CH_0(S_X)$ denote the cohomological (lci) Chow group of 0-cycles on
$S_X$, introduced in \cite{Binda-Krishna}. Let $\CH^F_0(X)$ denote the classical
(Borel-Moore type) homological
Chow group of 0-cycles on $X$ (see \cite[Chapter~1]{Fulton}).

When $k$ is perfect and $X$ is smooth over $k$,
a fundamental result of Binda-Krishna \cite{Binda-Krishna}
connecting 0-cycles with and without modulus says that there exists a split
short exact sequence
\begin{equation}\label{eqn:BK}
  0 \to \CH_0(X|D) \to \CH_0(S_X) \to \CH^F_0(X) \to 0.
\end{equation}

This result plays a fundamental role in the current research on 0-cycles with
modulus on smooth varieties over perfect fields. Several new results have been found in the
subject of cycles with modulus using this exact sequence.
One reason for its usefulness is that it allows one to transport any
problem about 0-cycles with modulus on smooth schemes
to an analogous problem about 0-cycles without
modulus on schemes, albeit with mild singularities. In many cases, the latter
groups are well understood.

\subsection{Objective}
The goal of this paper is to prove an analogue of ~\eqref{eqn:BK}
when neither $k$ is perfect nor $X$ is smooth over $k$.
The motivation for this generalization comes from our desire to: (1) extend existing
results on 0-cycles with modulus on smooth schemes over perfect field to
those over imperfect fields, and (2) extend existing results on 0-cycles on
smooth schemes over perfect fields to singular schemes over such fields.

In this paper, we apply this generalization to prove
a new moving lemma for the Chow group of 0-cycles on
regular schemes, prove Bloch's formula for 0-cycles with modulus on
singular schemes, and prove a moving lemma for 0-cycles on singular surfaces.
Further important applications are obtained in \cite{Krishna-SH} (where
the 0-th Suslin homology is identified with the Chow group with 0-cycles with modulus
under some necessary conditions), and \cite{GKR}
(where a class field theory over finite fields is established for schemes with
singularities).

\subsection{Main result}
Let $k$ now be arbitrary and $X$ an integral quasi-projective $R_1$-scheme of dimension $d \ge 1$ over $k$.
Let $X^o$ denote the regular locus of $X$. Note that $X^o$ may not be smooth over $k$
because the latter is not assumed to be perfect.
Let $D \subset X$ be such that $D \cap X^o$ is a Cartier
divisor (this may be empty).
We shall introduce a new 0-cycle group with modulus on $X$ which we shall
denote by $\CH_0(X|D)$, following the classical notation.
It is an extension of the lci version $\CH_0(X)$ of the Levine-Weibel Chow group
of $X$ to the modulus world. Our motivation behind the introduction of
this group is the expectation that this would be useful in understanding the
already existing Chow group with modulus on non-proper smooth varieties, e.g.,
the regular loci of singular projective varieties.
(see, for instance, \cite{GKR} whose main results are proven using this new Chow group
with modulus). 
Let $\CH_0(X,D)$ denote the lci Chow group of $X$ relative to $D \cup X_\sing$
(see \cite[\S~3.3]{Binda-Krishna}).
The new decomposition theorem reads as follows.

\begin{thm}\label{thm:Main-1}
  There is a split short exact sequence
  \[
    0 \to \CH_0(X|D) \to \CH_0(S_X) \to \CH_0(X, D) \to 0.
  \]
  \end{thm}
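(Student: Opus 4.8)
The plan is to adapt the strategy of Binda--Krishna to the present singular, possibly imperfect setting, organizing everything around the fold map and the two inclusions of the double. Write $\iota_\pm \colon X \to S_X$ for the two closed immersions of $X$ as the components of $S_X$ and $\nabla \colon S_X \to X$ for the fold map, so that $\nabla \circ \iota_\pm = \id_X$. Since $X$ is $R_1$ and $D \cap X^o$ is a Cartier divisor, the regular locus of $S_X$ is $(X^o \setminus D) \sqcup (X^o \setminus D)$, and all three Chow groups in the statement share the same underlying group of admissible $0$-cycles: it is $\mathcal{Z}_0(X^o \setminus D)$ for both $\CH_0(X|D)$ and $\CH_0(X,D)$, and $\mathcal{Z}_0(X^o \setminus D)^{\oplus 2}$ for $\CH_0(S_X)$. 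At the level of cycle groups there is a tautological split exact sequence with injection $\delta(\alpha) = \iota_{-*}\alpha - \iota_{+*}\alpha = (-\alpha,\alpha)$, fold $\nabla_*(\alpha_+,\alpha_-) = \alpha_+ + \alpha_-$, and section $\iota_{+*}$. The entire theorem is therefore the assertion that this tautological sequence descends to the Chow groups and remains split exact, i.e. that the three subgroups of rational equivalences match up correctly under $\delta$, $\nabla_*$ and $\iota_{+*}$.

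First I would verify that the three maps are well defined on Chow groups. For the section $s = \iota_{+*} \colon \CH_0(X,D) \to \CH_0(S_X)$ one checks that a relative rational equivalence on $X$ (coming from a curve and a function that is trivial along $D \cup X_\sing$) pushes forward to a rational equivalence on $S_X$ supported on the component $X_+$; because such a function is trivial along the gluing locus $D$, it does not interact with the other component. For the fold $\nabla_* \colon \CH_0(S_X) \to \CH_0(X,D)$ one checks that the image under $\nabla$ of a defining relation on $S_X$ --- a good (Cohen--Macaulay) curve $C \hookrightarrow S_X$ together with $f \in k(C)^\times$ --- is a relative rational equivalence on $X$, the only issue being curves meeting the singular locus $D$, which is controlled by compatibility of the lci pushforward with $\nabla$. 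Granting well-definedness, $\nabla_* \circ s = \id$ holds already at the cycle level, so the sequence $0 \to \ker \nabla_* \to \CH_0(S_X) \to \CH_0(X,D) \to 0$ is automatically split exact, and it remains only to identify $\ker \nabla_*$ with $\CH_0(X|D)$ via $\delta$.

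The crux is therefore the analysis of $\delta \colon \CH_0(X|D) \to \ker \nabla_* \subseteq \CH_0(S_X)$ and the proof that it is an isomorphism. Well-definedness of $\delta$ amounts to showing that the anti-diagonal of a modulus-trivial $0$-cycle is trivial on $S_X$; surjectivity onto $\ker \nabla_*$ follows from the tautological computation $(\alpha_+,\alpha_-) - s\nabla_*(\alpha_+,\alpha_-) = (-\alpha_-,\alpha_-) = \delta(\alpha_-)$, so every kernel class is an anti-diagonal; and injectivity is the converse, that if $(-\alpha,\alpha)$ is trivial on the double then $\alpha$ is modulus-trivial. All three reduce, via the presentation of rational equivalence through finite maps from curves, to a single curve-level correspondence: for a good curve $C$ with double $S_C$ and $D_C = C \cap D$, the functions on $S_C$ producing the relevant relations must correspond exactly to pairs of functions on the two copies of $C$ that are congruent modulo the modulus (conductor) along $D_C$. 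This is the precise incarnation of the slogan that the double sees the modulus, and it is the \emph{technical heart} of the argument.

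I expect the main obstacle to lie exactly in this curve-level correspondence in the new generality. Over an imperfect field one cannot invoke generic smoothness or the usual Bertini reductions to smooth curves, so the congruence/conductor computation on $S_C$ must be carried out for curves that are merely Cohen--Macaulay (or $R_1$) and that are allowed to pass through $D \cap X_\sing$. Establishing that the functions on $S_C$ are \emph{precisely} the modulus-congruent pairs --- integrally, not just generically --- and that this identification is compatible with taking divisors of rational functions, is the step I would expect to require the most care and the most genuinely new input beyond the smooth, perfect case of Binda--Krishna.
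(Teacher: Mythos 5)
There is a genuine gap, in fact two.

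First, your section $s=\iota_{+*}\colon \CH_0(X,D)\to \CH_0(S_X)$, $\alpha\mapsto(\alpha,0)$, is not well defined, and your justification for it rests on a misreading of the relations defining $\CH_0(X,D)$: those are given by functions that are merely \emph{regular and invertible} along $D\cup X_\sing$, not \emph{trivial} (congruent to $1$) along $D$ --- triviality along $D$ is the modulus condition defining $\CH_0(X|D)$, a different group. For $f$ regular invertible but not $\equiv 1$ on $C\cap D$, the pair $(f,1)$ does not glue to a function on the double of $C$, and the cycle $(\divf(f),0)$ is in general a nonzero class in $\CH_0(S_X)$: indeed $(\divf(f),\divf(f))=\divf((f,f))_{S_C}\sim 0$, so $(\divf(f),0)\sim(0,-\divf(f))$, whose image under the retraction $\tau^*_X=\iota^*_+-\iota^*_-$ is the class of $\divf(f)$ in $\CH_0(X|D)$ --- exactly the discrepancy the theorem is measuring. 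The paper avoids this by taking the surjection to be a single projection $\iota^*_-$ and splitting it by the \emph{diagonal} $\Delta^*_X(\alpha)=(\alpha,\alpha)$, which is well defined precisely because $(f,f)$ does glue on $S_C$; the pushforward $\iota_{+*}$ is only used on $\CH_0(X|D)$, where the modulus condition lets one extend $f$ by $1$ to the other branch.

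Second, your reduction of the injectivity of $\delta$ to a ``curve-level correspondence'' on doubles $S_C$ of curves $C\subset X$ is not available. A Cartier (or good) curve on $S_X$ is in general \emph{not} of the form $S_C$: its two halves $C_+=C\cap X_+$ and $C_-=C\cap X_-$ are required to agree only along $D$ and are otherwise unrelated curves in $X$. The case $C_+=C_-$, where your conductor/congruence analysis applies, is the easy half of the argument (it follows from the exact sequence $0\to\sO_{C,E}\to\sO_{C_+,E}\times\sO_{C_-,E}\to\sO_E\to 0$). The actual technical heart is the case $C_+\neq C_-$, which in the paper consumes all of \S\ref{sec:CC}--\S\ref{sec:PB}: one must move a general Cartier curve on $S_X$ into a very ample position, compare $\divf(f_+)$ on $C_+$ with $\divf(f_-)$ on $C_-$ via auxiliary sections $s_\infty$, $s_0^{\pm}$ of large twists, and reduce $d\ge 3$ to the surface case by carefully chosen hypersurface sections and blow-ups --- all of which relies on the Bertini theorems of Ghosh--Krishna over arbitrary (possibly imperfect) fields. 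Your write-up correctly identifies the overall shape of the splitting but locates the difficulty in the wrong place, and the step it does rely on (the section) fails as stated.
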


  \vskip .3cm

\vskip .3cm

When $X$ is smooth over $k$, the group
  $\CH_0(X,D)$ coincides with $\CH^F_0(X)$ as a consequence of
  the moving lemma for higher Chow groups by Bloch
  (see \cite[Lemma~3.20]{Binda-Krishna}). Hence, \thmref{thm:Main-1}
  is identical to ~\eqref{eqn:BK} in this case.
  The group $\CH_0(X,D)$ coincides with $\CH_0(X)$ for many singular
  schemes as well (see \corref{cor:Main-3-0}).

 \begin{remk}\label{remk:Imperfect}
  We emphasize to the reader that even when $X$ is smooth over the base field $k$,
  \thmref{thm:Main-1} asserts more than the decomposition theorem of
  Binda-Krishna in that it does not assume $k$ to be perfect.
  This is achieved by virtue of some  delicate new moving techniques
  (see \S~\ref{sec:double}) 
  and new Bertini theorems proven recently by Ghosh-Krishna
  \cite{Ghosh-Krishna-Bertini}.
  \end{remk}

\subsection{Application I : moving lemma for Chow group with modulus}
  \label{sec:MLCM}
  The proof of \thmref{thm:Main-1} has the following consequence for the Chow group of
  0-cycles with modulus ({\`a} la Kerz-Saito) on regular schemes.
  Let us assume in \thmref{thm:Main-1} that $X$ is regular and $D$ is reduced.
  Then we prove (a stronger version of)  the following new moving lemma for $\CH_0(X|D)$.

  \begin{cor}\label{cor:ML-CH-mod}
    In the definition of $\CH_0(X|D)$, it suffices to consider only those
    integral curves $C \subset X$ not contained in $D$ for which we can 
    assume that $C \cap D_\sing = \emptyset$ and $\nu^*(D)$ is regular,
    where $\nu \colon C_n \to X$ is the canonical map from the normalization of $C$.
    If $k$ is algebraically closed and $X$ is affine or projective,
    we can further assume that $C$ is regular along $D$.
  \end{cor}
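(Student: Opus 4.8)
The plan is to extract the assertion from the curve-moving techniques of \secref{sec:double} that enter the proof of \thmref{thm:Main-1}, transported across the inclusion $\CH_0(X|D) \hookrightarrow \CH_0(S_X)$. Since $X$ is regular and $D$ is a reduced Cartier divisor, the locus $D_\sing$ has codimension $\geq 2$ in $X$, and the double $S_X$ is regular exactly away from the gluing divisor $D$; in particular $S_X \setminus D$ is the disjoint union $(X\setminus D)_+ \sqcup (X\setminus D)_-$, and any integral curve on $S_X$ lies entirely in one of the two copies $X_\pm$. The first step is to recall the dictionary of \cite{Binda-Krishna}: a Kerz--Saito modulus relation on $X$, given by an integral curve $C \not\subset D$ and $f \in k(C)^\times$ with $f \equiv 1 \pmod{\nu^*D}$, is realized on $S_X$ by the reduced good curve $E = C_+ \cup_{C\cap D} C_-$ equipped with the rational function that is $f$ on the branch $C_+$ and $1$ on the branch $C_-$. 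The modulus condition is exactly what makes this function a unit, equal to $1$, at every point of $E \cap D$, so that $E$ is an admissible Levine--Weibel datum whose associated relation is $\nu_*(\divf f)$ on the $X_+$-side and trivial on the $X_-$-side.

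The core step is to run, on the good curve $E$ just produced, the moving lemma established in \secref{sec:double} for relations among $0$-cycles on $S_X$. This replaces $E$ by good curves that avoid $D_\sing$, meet the singular locus $D$ transversally in a reduced subscheme of regular points of $D$, and are lci there. Decomposing each moved good curve into its components in $X_+$ and $X_-$ and reading off the $X_+$-part, the transversality and avoidance translate, under the isomorphism $(X\setminus D)_+ \cong X\setminus D$, into the conditions $C'\cap D_\sing = \emptyset$ and $(\nu')^*D$ reduced, i.e. regular. The injectivity half of \thmref{thm:Main-1} guarantees that this passage neither creates nor loses relations on the $X$-side, so the modulus relations carried by such nice curves already generate all of $\CH_0(X|D)$; this is the first assertion.

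The main obstacle is the Bertini step underlying the moving lemma of \secref{sec:double}, which must be performed over an arbitrary, possibly imperfect, base field where the classical Bertini theorems are unavailable. I would invoke the Bertini theorems of Ghosh--Krishna \cite{Ghosh-Krishna-Bertini}. The delicate point is that these must be applied relative to the singular locus $D$ of $S_X$ so as to guarantee, for a general member of the relevant linear system, simultaneous disjointness from the codimension $\geq 2$ locus $D_\sing$ and a reduced intersection with $D$ supported at regular points of $D$; the lci condition where the two branches meet along $D$ is exactly what is needed for the moved curve to define a class in $\CH_0(S_X)$. Arranging in addition that the moving curve carries the prescribed $0$-cycle is handled by the incidence (``passing through given points'') form of the same Bertini package.

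For the sharper conclusion when $k$ is algebraically closed and $X$ is affine or projective, I would upgrade the regular-intersection input to the smoothness Bertini theorem. Over an algebraically closed field regularity and smoothness coincide, and in precisely the affine and projective settings the Ghosh--Krishna theorems produce a general curve that is itself smooth in a neighbourhood of $D$ while still passing through the prescribed points. Translated back to $X$ through the dictionary above, this yields a curve $C$ that is regular along $D$, which completes the proof.
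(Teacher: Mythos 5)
Your overall route---realize a modulus relation as a Levine--Weibel relation on the join curve in $S_X$, move that curve on the double, and transport back---is indeed the paper's route, but the step where all the difficulty lives is missing from your sketch. After moving on $S_X$, a relation is carried by a Cartier curve $C'=C'_+\amalg_E C'_-$ with $C'_+\neq C'_-$ in general, together with a function $g=(g_+,g_-)$ satisfying only $g_+|_E=g_-|_E$; neither restriction is $1$ on $E$. ``Reading off the $X_+$-part'' therefore does \emph{not} produce a modulus relation: $\divf(g_+)$ alone is merely a relation relative to $D$ (this is what \lemref{lem:Proj-0} extracts). What must be shown is that the \emph{difference} $\tau^*_X=\iota^*_+-\iota^*_-$ of such a relation is a sum of modulus relations carried by nice curves, and this is exactly the hard content of \lemref{lem:Tau-2} and \propref{prop:Tau>2} (the construction of the auxiliary sections $s_\infty$, ${s'_0}^{\pm}$ and the resulting three-term decomposition of $\tau^*_X(\divf(f))$). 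Your appeal to ``the injectivity half of \thmref{thm:Main-1}'' cannot substitute for this: injectivity of $p^+_*$ into $\CH_0(S_X)$ only controls the coarse group $\CH_0(X|D)$, whereas the corollary asserts that the finer group $\CH^{\rm mod}_0(X|D)$ maps isomorphically onto it; moreover that injectivity is itself proved by exhibiting $\tau^*_X$ as a splitting with values in $\CH^{\rm mod}_0(X|D)$, so the argument as written is circular. You would also need the projective-bundle trick to descend from $\CH^{LW}_0(S_X)$ to $\CH_0(S_X)$ (whose relations come from finite lci maps, not embedded curves) and the pro-$\ell$ trick for finite $k$. The paper's proof of the first part is the one-line reduction to \corref{cor:mL-lci}, where all of this is packaged.

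For the second assertion, the restriction to affine or projective $X$ has nothing to do with Bertini: the Ghosh--Krishna theorems already give regular sections over any infinite field and make no distinction between affine, projective and general quasi-projective schemes. The genuine obstruction is that $\CH_0(S_X)$ is presented by relations on good curves (finite lci morphisms from abstract curves), which no Bertini argument converts into embedded Cartier curves regular along $D$. The paper instead runs the diagram \eqref{eqn:ML-CH-mod-0}: the horizontal arrows are injective by the splittings above, and the right vertical arrow $\CH^{LW}_0(S_X)\to\CH_0(S_X)$ is an isomorphism by the comparison theorems of \cite{Gupta-Krishna-JAG} (affine) and \cite{Krishna-torsion} (projective). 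These deep results, available precisely in those two cases over an algebraically closed field, are what force $\CH^{LW}_0(X|D;B)\to\CH^{\rm mod}_0(X|D;B)$ to be an isomorphism, i.e.\ what allow one to take $C$ regular along $D$; your proposed Bertini upgrade does not reach this conclusion.
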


  \begin{remk}\label{remk:Suslin-hom}
    One may recall that if $X$ is assumed to be complete in \corref{cor:ML-CH-mod}
    ($D$ still reduced), then there is a canonical surjection
    $\phi_{X|D} \colon \CH_0(X|D) \surj H^{sus}_0(X \setminus D)$, where
    the latter is the 0-th Suslin homology (see \cite[\S~2.2]{Binda-Krishna-EP} or
    \cite[\S~8]{Ghosh-Krishna-CFT}). The reader is now warned that the above corollary
    does not imply that $\phi_{X|D}$ is an isomorphism. The question as to when
    the latter happens, is the subject of \cite{Krishna-SH}.
\end{remk}

\subsection{Application II: Bloch's formula and moving lemma for Chow group of
    singular surfaces}\label{sec:BF}
  Let us assume in \thmref{thm:Main-1} that $\dim(X) = 2$.
  Let $\sK^M_{2, (X,D)}$ be the kernel of the restriction map of Zariski sheaves
    of Milnor $K$-groups $\sK^M_{2,X} \surj \sK^M_{2,D}$ on $X$. Every closed point
    $x \in X^o \setminus D$ has a cycle class $cyc_X([x]) \in
    H^2_\zar(X, \sK^M_{2,(X,D)})$ (e.g., see \cite[\S~2]{Kato-Saito-86}). 
This defines the well known cycle class map
    $\sZ_0(X^o \setminus D) \to H^2_\zar(X, \sK^M_{2,(X,D)})$.
    As another application of \thmref{thm:Main-1}, we are able to prove the following
    Bloch's formulas for Chow groups, with and without modulus, for surfaces over
    arbitrary fields.

    \begin{thm}\label{thm:Main-3}
      The cycle class map induces isomorphisms
      \[
        \cyc_{X} \colon \CH_0(X, D) \xrightarrow{\cong} H^2_\zar(X, \sK^M_{2,X});
      \]
      \[
        \cyc_{X|D} \colon \CH_0(X|D) \xrightarrow{\cong} H^2_\zar(X, \sK^M_{2,(X,D)}).
      \]
    \end{thm}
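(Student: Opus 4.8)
The plan is to deduce both isomorphisms simultaneously from \thmref{thm:Main-1} by comparing the Chow-theoretic decomposition with a parallel decomposition of Milnor $K$-cohomology, the comparison being effected by the cycle class map. The one genuinely new analytic input will be Bloch's formula for the double $S_X$, which is itself a reduced quasi-projective singular surface. First I would record the sheaf-level structure of the double. Writing $\iota_1, \iota_2 \colon X \to S_X$ for the two closed immersions and $\pi \colon S_X \to X$ for the fold map (so $\pi \circ \iota_j = \id$), restriction to the first copy yields a short exact sequence of Zariski sheaves on $S_X$
\[
0 \to (\iota_2)_*\sK^M_{2,(X,D)} \to \sK^M_{2,S_X} \xrightarrow{\iota_1^*} (\iota_1)_* \sK^M_{2,X} \to 0,
\]
where exactness on the left uses that the kernel consists of symbols on the second copy vanishing along $D$, and surjectivity of $\sK^M_{2,X} \surj \sK^M_{2,D}$ uses the $R_1$-hypothesis together with $D \cap X^o$ being Cartier. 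Since $\iota_1$ is a section of $\pi$, the sequence is split, and applying $H^2_\zar$ produces a split short exact sequence
\[
0 \to H^2_\zar(X, \sK^M_{2,(X,D)}) \to H^2_\zar(S_X, \sK^M_{2,S_X}) \to H^2_\zar(X, \sK^M_{2,X}) \to 0.
\]

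Next I would assemble the comparison ladder. The cycle class maps $\cyc_{S_X}$, $\cyc_{X|D}$ and $\cyc_X$ are compatible with the two split short exact sequences because all the structure maps --- the inclusion of the modulus subgroup, the projection to $\CH_0(X,D)$, and their cohomological counterparts $\iota_1^*$, $(\iota_2)_*$ and $\pi^*$ --- are induced by the same morphisms $\iota_1, \iota_2, \pi$, and the cycle class map is covariant for closed immersions and contravariant for the fold. In particular the modulus condition defining $\CH_0(X|D)$ is matched, under $(\iota_2)_*$, with the relative sheaf $\sK^M_{2,(X,D)}$. I would then prove Bloch's formula for the singular surface $S_X$, namely that $\cyc_{S_X} \colon \CH_0(S_X) \xrightarrow{\cong} H^2_\zar(S_X, \sK^M_{2,S_X})$ is an isomorphism; this is carried out via the Gersten--Kato complex computing $H^2_\zar(S_X, \sK^M_{2,S_X})$ together with the new moving lemma for $0$-cycles on singular surfaces, which forces the cycle class map to be both surjective and injective. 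The same argument applied to $X$ itself, combined with the identification $\CH_0(X,D) = \CH_0(X)$ from \corref{cor:Main-3-0}, yields the first isomorphism $\cyc_X$. Finally, since $\cyc_{S_X}$ and $\cyc_X$ are isomorphisms and the two rows are split compatibly, the five lemma forces $\cyc_{X|D}$ to be an isomorphism as well, completing the proof.

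The main obstacle will be Bloch's formula for $S_X$ --- equivalently, for a general reduced quasi-projective surface over an arbitrary, possibly imperfect, field. Controlling the Gersten--Kato complex on a non-normal surface and establishing exactness at the $0$-cycle spot requires the delicate moving lemma for singular surfaces, and the imperfect-field case forces one to invoke the Bertini theorems of Ghosh--Krishna rather than the classical ones. A secondary but real difficulty is verifying the commutativity of the comparison ladder at the level of explicit cycle classes: one must check that the relative cycle class $\cyc_{X|D}$ indeed lands in the subgroup $H^2_\zar(X, \sK^M_{2,(X,D)})$ and is intertwined with $\cyc_{S_X}$ by $(\iota_2)_*$, and that the splitting of the cohomological sequence matches the splitting of the Chow-theoretic one coming from \thmref{thm:Main-1}.
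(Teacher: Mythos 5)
Your overall architecture --- comparing the split exact sequence of \thmref{thm:Main-1} with a split short exact sequence in Milnor $K$-cohomology of the double and concluding by the five lemma, with Bloch's formula for the singular surface $S_X$ as the key external input --- is essentially the paper's strategy for the second isomorphism. (Two presentational differences: the paper imports the fact that $\cyc_{S_X}$ is an isomorphism from Binda--Krishna--Saito rather than reproving it via a Gersten--Kato analysis, and it works with $H^2_\zar(S_X,\sK^M_{2,(S_X,X_-)})$, identifying this group with $H^2_\zar(X,\sK^M_{2,(X,D)})$ only at the level of $H^2$ via $\iota_+^*$, rather than asserting your sheaf-level identification of the kernel with $(\iota_2)_*\sK^M_{2,(X,D)}$.)

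The genuine gap is in your derivation of the first isomorphism. You propose to obtain $\cyc_X\colon \CH_0(X,D)\xrightarrow{\cong} H^2_\zar(X,\sK^M_{2,X})$ by combining Bloch's formula for the singular surface $X$ (i.e.\ for $\CH_0(X)$) with the identification $\CH_0(X,D)=\CH_0(X)$ from \corref{cor:Main-3-0}. This is circular: part (2) of that corollary is itself deduced from the first isomorphism of \thmref{thm:Main-3} --- the paper factors $\cyc_X$ as $\CH_0(X,D)\to\CH_0(X)\to H^2_\zar(X,\sK^M_{2,X})$ and concludes that the first arrow is an isomorphism precisely because the composite and the second arrow are. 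The moving lemma $\CH_0(X,D)\cong\CH_0(X)$ has no independent proof available (its general form is posed as an open question in the paper), so you cannot invoke it as an input. The circularity also infects your final step: in the five-lemma ladder, if the right-hand vertical map $\CH_0(X,D)\to H^2_\zar(X,\sK^M_{2,X})$ is only known to be surjective, you obtain injectivity of $\cyc_{X|D}$ but not surjectivity. The missing idea is the paper's: injectivity of $\cyc_X$ on $\CH_0(X,D)$ follows from the commutative square $\cyc_{S_X}\circ\Delta^*_X=\Delta^*_X\circ\cyc_X$, because $\Delta^*_X\colon\CH_0(X,D)\to\CH_0(S_X)$ is split injective by the proof of \thmref{thm:Main-1} and $\cyc_{S_X}$ is an isomorphism; surjectivity of $\cyc_X$ is then a separate and easier point via Kato--Saito. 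With that replacement, your ladder argument for $\cyc_{X|D}$ goes through.
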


 Binda-Krishna-Saito \cite{BKS} had earlier proven the first isomorphism when
    $k$ is perfect and $D = \emptyset$ and the second isomorphism when $k$ is perfect
    and $X$ is smooth over $k$.

 One consequence of Bloch's formula is the following moving lemma for
    $\CH_0(X)$.
    \begin{cor}\label{cor:Main-3-0}
     Let $D \subset X$ be as in \thmref{thm:Main-1}.  
      Then the canonical map
      \[
        \CH_0(X,D) \to \CH_0(X)
      \]
      is an isomorphism in the following cases:
      \begin{enumerate}
      \item
        $X$ is smooth over $k$.
      \item
        $d = 2$.
      \item
        $k$ is algebraically closed and $X$ is affine.
        \end{enumerate}
    \end{cor}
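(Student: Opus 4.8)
The plan is to produce the canonical map and then verify that it is an isomorphism in each of the three cases, in every instance by comparing $\CH_0(X,D)$ with a target that does not see the divisor $D$. The map itself is immediate: a closed point of $X^o\setminus D$ is in particular a point of $X^o$, and any rational equivalence admissible for the pair with bad locus $D\cup X_\sing$ is a fortiori admissible for the one with bad locus $X_\sing$, so forgetting $D$ induces a well-defined homomorphism $\CH_0(X,D)\to\CH_0(X)$. Both its injectivity and its surjectivity are moving-lemma assertions: surjectivity asks that every $0$-cycle on $X^o$ be rationally equivalent to one supported away from $D$, and injectivity asks that a rational equivalence witnessing triviality in $\CH_0(X)$ be refinable to one meeting $D\cup X_\sing$ in the prescribed fashion. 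Case (1) is then settled at once: following \cite[Lemma~3.20]{Binda-Krishna} we have $\CH_0(X,D)=\CH^F_0(X)$ when $X$ is smooth, and since $X$ is then regular the lci version of the Levine--Weibel group $\CH_0(X)$ likewise coincides with $\CH^F_0(X)$; under these identifications the canonical map is the identity.

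For case (2), I would apply \thmref{thm:Main-3} twice. For the given $D$ it supplies an isomorphism $\cyc_X\colon\CH_0(X,D)\xrightarrow{\cong}H^2_\zar(X,\sK^M_{2,X})$. Applying it with $D=\emptyset$, which is permitted since $D\cap X^o$ is allowed to be empty and for which $\CH_0(X,\emptyset)$ is, by its definition in \cite[\S~3.3]{Binda-Krishna}, the lci group $\CH_0(X)$ relative to $X_\sing$, supplies an isomorphism $\cyc_X\colon\CH_0(X)\xrightarrow{\cong}H^2_\zar(X,\sK^M_{2,X})$ with the \emph{same} target. Since both maps are induced by the common cycle-class assignment $x\mapsto\cyc_X([x])$ on closed points, the triangle formed by the canonical map and these two isomorphisms commutes already at the level of cycle groups, and the canonical map is therefore the composite of two isomorphisms, hence itself an isomorphism.

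For case (3), with $k$ algebraically closed and $X$ affine, I would establish the two moving statements directly. Here $D\cap X^o$ is an effective Cartier divisor on the regular affine scheme $X^o$, and the moving techniques developed for \thmref{thm:Main-1} in \S\ref{sec:double}, together with the Bertini theorems of Ghosh--Krishna \cite{Ghosh-Krishna-Bertini}, should allow one both to replace an arbitrary $0$-cycle by a rationally equivalent one whose support is disjoint from $D$ and to put the curves appearing in a given rational equivalence into the required position relative to $D\cup X_\sing$; these yield surjectivity and injectivity respectively. This is essentially the same circle of ideas already used to produce the sequence of \thmref{thm:Main-1} and the moving statement of \corref{cor:ML-CH-mod}.

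The step I expect to be the main obstacle is case (3): cases (1) and (2) reduce to identifications that are either classical or already established in the paper, whereas the affine algebraically closed case genuinely demands a moving lemma. The delicate point is to move $0$-cycles and their rational equivalences off $D$ simultaneously while remaining inside the regular locus $X^o$ and keeping the singular locus in good position, and to ensure that the general-position (Bertini) arguments remain valid over an algebraically closed field of arbitrary characteristic. This is precisely where the new Bertini and moving techniques highlighted in \remref{remk:Imperfect} are needed.
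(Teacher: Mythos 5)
Your construction of the canonical map and your treatment of cases (1) and (2) are correct and essentially match the paper: case (1) is exactly the identification $\CH_0(X,D)=\CH^F_0(X)=\CH_0(X)$ via \cite[Lemma~3.20]{Binda-Krishna}, and case (2) is the paper's argument in \S~\ref{sec:ML-lci*}, which factors $\cyc_X$ as $\CH_0(X,D)\to\CH_0(X)\to H^2_\zar(X,\sK^M_{2,X})$ and quotes \cite[Theorem~8.1]{BKS} for the second arrow (your variant of invoking \thmref{thm:Main-3} with $D=\emptyset$ amounts to the same citation).

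Case (3), however, is a genuine gap, and it is the only case that required a new argument. You propose to prove injectivity and surjectivity ``directly'' as moving statements using the Bertini theorems of \cite{Ghosh-Krishna-Bertini} and the techniques of \S~\ref{sec:double}, but you give no actual argument, and this is precisely the statement the authors say they do \emph{not} know how to prove by moving: the injectivity of $\CH^{LW}_0(X,Y)\to\CH^{LW}_0(X)$ for a general nowhere dense $Y$ is posed as the open Question~\ref{ques:ML-LW-0}, and the text preceding it states explicitly that the expected general moving lemma is unknown. The difficulty is that injectivity requires taking a rational equivalence whose curves may meet $D$ badly and replacing it by one whose curves are Cartier along $D\cup X_\sing$ while fixing the given $0$-cycle; the Bertini results only control general members of linear systems and do not by themselves rework an arbitrary given relation. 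The paper instead circumvents the moving problem entirely: it first reduces to the Levine--Weibel groups using $\CH^{LW}_0(X)\cong\CH_0(X)$ for affine $X$ over algebraically closed fields (\cite[Corollary~7.6]{Krishna-Invent}) and the surjectivity of $\CH^{LW}_0(X,D)\to\CH^{LW}_0(X)$ from \cite[Lemma~2.2]{Ghosh-Krishna-CFT}, and then deduces injectivity from the factorization $\CH^{LW}_0(X,D)\surj\CH^{LW}_0(X)\xrightarrow{\cyc_X}H^d_\nis(X,\sK^M_{d,X})$, where both the second and the composite arrows are isomorphisms by the higher-dimensional affine Bloch formula \cite[Theorem~1.1]{Gupta-Krishna-JAG}. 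In other words, the affine case rests on a deep external input (Bloch's formula for affine varieties over algebraically closed fields), not on a direct geometric moving lemma; without that input, or an actual construction carrying out the moving you describe, your case (3) is unproved.
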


    \begin{remk}\label{remk:Levine-unpublished}
      We shall show in the proof of \corref{cor:Main-3-0} that its part (3) also holds
      if $X$ is projective, assuming a result from the yet unpublished paper
      \cite{Levine-unpub}.
      \end{remk}

Note that the case (1) of the corollary is already known by
    \cite[Lemma~3.20]{Binda-Krishna}.
    So the new results are the other two cases.
The case (2) was earlier proven by Levine-Weibel 
when $k$ is algebraically closed and $X$ is affine 
(see \cite[Theorem~2.3]{Levine-Weibel}).

The main application of \corref{cor:Main-3-0} for us is that we can replace
$\CH_0(X,D)$ by $\CH_0(X)$ in \thmref{thm:Main-1} when $X$
satisfies one of the three conditions of the corollary.
We expect this is to be true in general
(see Question~\ref{ques:ML-LW-0}) but do not know how to prove it.

\subsection{Overview of proofs}\label{sec:Outline}
We now provide a brief outline of our proofs.
The proof of \thmref{thm:Main-1} presented here was undoubtedly inspired by the
previous decomposition theorem of Binda-Krishna \cite{Binda-Krishna}.
However, we had to make some critical advances in order to prove the new
decomposition theorem.

The first is to devise methods to get rid of the assumption that the base field $k$ is
perfect.
Due to potential applications, some experts had asked the second author if this could be
possible.
In order to do so, we significantly refine the techniques of \cite{Binda-Krishna}.
This involves developing some delicate new moving techniques. We expect these new moving
techniques to be useful in future study of cycles over imperfect fields.
In particular, these also allow us to clarify some obscure steps in the proof of
the decomposition theorem of \cite{Binda-Krishna}.
Besides these movings, we employ the new Bertini
theorems proved recently by Ghosh-Krishna \cite{Ghosh-Krishna-Bertini} over
arbitrary fields. 

The second is to weaken the assumption that $X$ is smooth over the base field.
Filling this part is very challenging and requires one to make very fine
choices of various intermediate hypersurface sections and curves that we
consider during the proofs. In particular, the reduction of the higher dimensions
case to surfaces becomes much subtle. \S~\ref{sec:double} provides many of
the new technical steps we need in order to achieve this reduction. 

In \S~\ref{sec:LWCM}, we review the definitions of the Levine-Weibel and
lci Chow groups of singular schemes. We then introduce the Chow group of
0-cycles with modulus on singular schemes.
In \S~\ref{sec:CC}, we prove some results which show that the Chow group of
the double of a singular scheme along a closed subscheme has a very special
presentation. This is then used in \S~\ref{sec:PB} to achieve the most
challenging piece of the proof of  \thmref{thm:Main-1}. We also prove
a moving lemma in this section.
In \S~\ref{sec:ES}, we complete the proofs of \thmref{thm:Main-1} and its
applications.

\subsection{Notations}\label{sec:Notn}
Given a field $k$, a $k$-scheme will
mean a separated and essentially of finite type $k$-scheme.
We shall denote the category of such schemes by $\Sch_k$.
We let $\Sm_k$ denote the category of smooth essentially of finite type $k$-schemes.
A curve in this paper will mean a $k$-scheme of pure dimension one.

If $Y,Z \subset X$ are two closed subschemes, then $Y \cap Z$ will mean
the scheme theoretic intersection $Y \times_X Z$ unless we say otherwise.
If $X$ is reduced, we shall let $X_n$ denote the normalization of $X$.
We shall let $k(X)$ denote the total ring of quotients of $X$.
For a morphism $f \colon X' \to X$ of schemes and
$D \subset X$ a subscheme, we shall write $D \times_X X'$ as $f^*(D)$.
We shall let $\sZ_0(X)$ denote the free abelian group on the set of closed points on
$X$.

\section{Levine-Weibel Chow group with modulus}\label{sec:LWCM}
We fix a field $k$ of any characteristic.
We begin by very briefly recalling the definitions of the Levine-Weibel Chow group
and its improved version, the lci Chow group.
We refer to \cite[\S~3]{Binda-Krishna} for complete definitions.
If $X$ is a reduced connected $k$-scheme of dimension $d$, then a point $x \in X$
will be called a singular point if either it lies in an irreducible component of $X$
of dimension $< d$ or $\sO_{X,x}$ is not a regular local ring.
We let $X_\sing$ denote the locus of singular points of $X$. This is a closed
subset of $X$ and we endow it with the reduced closed subscheme structure.
We let $X_\reg = X \setminus X_\sing$. If $X$ is not necessarily reduced, we
shall let $X^\dagger = (X_\red)_\reg$.

Throughout this paper, we shall use the following notations.
Once we have fixed a reduced 
 scheme $X \in \Sch_k$, we shall write $X_\reg$ as $X^o$.
For any subscheme $Z \subset X$, we shall write $Z^o = Z \cap X^o$
and $Z^\star = Z \cup X_\sing$.
Keep in mind that $Z^o \neq Z_\reg$ in general unless $Z = X$.

\subsection{Review of Levine-Weibel and lci Chow groups}\label{sec:LWC}
Let $X$ be a reduced and connected quasi-projective $k$-scheme of dimension $d \ge 1$.
Let $Y \subset X$ be a closed subscheme which contains no
component of $X$ of maximal dimension.

Let $C \subset X$ be a curve. Recall (see \cite[\S~1]{Levine-Weibel} and
\cite[\S~2]{Biswas-Srinivas}) that $C$ is called Cartier relative to $Y$
if no component of $C$ lies in $Y^\star$, no embedded point of $C$ lies away from
$Y^\star$,  $\sO_{C, \eta}$ is a field if $\ov{\{\eta\}}$ is a component of $C$
disjoint from $Y^\star$ and, $C$ is defined by a regular sequence at every point of
$C \cap Y^\star$. If $Y \subset X_\sing$,
we shall simply say that $C$ is a Cartier curve on $X$.
We let $k(C, C\cap Y^\star)^{\times}$ be the image of the natural map
$\sO^{\times}_{C,S} \to \stackrel{s}{\underset{i =1}\oplus} \sO^{\times}_{C, \eta_i}$,
where $\{\eta_1, \ldots , \eta_s\}$ is the set of generic points of $C$ and
$S$ is the union of the closed subset 
$C \cap Y^\star$ and the set of generic points $\eta_i$ of $C$ 
such that $\ov{\{\eta_i\}}$ is 
disjoint from $Y^\star$.

For $f \in k(C, C\cap Y^\star)^{\times}$, we let $\divf(f) =
\stackrel{s}{\underset{i =1}\sum} \divf(f_i)$, where
$f_i$ is the projection of $f$ onto $\sO^{\times}_{C, \eta_i}$, and
$\divf(f_i)$ is the divisor of the restriction of $f_i$ to the maximal Cohen-Macaulay 
subscheme $C_i$ of $C$ supporting $\eta_i$ (see \cite[\S~2]{Biswas-Srinivas}).
If $C$ is reduced, then $k(C, C\cap Y^\star)^{\times} = \sO^{\times}_{C,S}$ and
for $f \in \sO^{\times}_{C,S}$, $\divf(f)$  is the sum of
$\divf(f_i)$, where the sum runs through
the divisors (in the classical sense)
of the restrictions of $f$ to the components of $C$.
If $f$ is a rational function on two different curves $C$ and $C'$, we may
sometimes use notations such as $\divf(f)_C$ and $\divf(f)_{C'}$ to make
clear as to where we consider the divisor of $f$.

Let $A \subset X$ be a reduced closed subscheme such that
$\dim(A) \le d-2$. We let $\sR^{LW}_0(X,Y;A)$ denote the subgroup of
$\sZ_0(X \setminus (Y^\star \cup A))$
 generated by $\divf(f)$, where $f \in k(C, C\cap Y^\star)^{\times}$ for
a Cartier curve $C$ relative to $Y$ which misses $A$.
The Levine-Weibel Chow group of $(X, Y;A)$ is defined as the quotient
\begin{equation}\label{eqn:LW-00}
\CH^{LW}_0(X,Y;A) = \frac{\sZ_0(X \setminus (Y^\star \cup A))}{\sR^{LW}_0(X,Y;A)}.
\end{equation}
We shall drop $Y$ from our notations if $Y \subset X_\sing$.
Thus, the groups $\sR^{LW}_0(X, X_\sing;A)$ and $\CH^{LW}_0(X,X_\sing;A)$ will be simply written
as $\sR^{LW}_0(X;A)$ and $\CH^{LW}_0(X;A)$, respectively.
When $A = \emptyset$, we drop $A$ from our notations of the Chow groups.
Similar convention will be enforced on the lci Chow groups that we recall now.

One says that $C$ a good curve relative to
$Y$ if $C$ is reduced and there is a finite local complete intersection (lci)
morphism $\nu \colon C \to X$ such that $\nu^{-1}(Y^\star)$ is nowhere dense in $C$.
If $Y \subset X_\sing$, we drop the adjective `relative to $Y$'
and simply say that $C$ is a good curve.
We let  $\sR_0(X,Y;A)$ be the subgroup of $\sZ_0(X \setminus (Y^\star \cup A))$
generated by $\nu_*(\divf(f))$, where
$\nu \colon C \to X$ is a good curve relative to $Y$ whose image misses $A$ and
$f \in k(C, \nu^{-1}(Y^\star))^{\times}$.
The lci Chow group of 0-cycles of $(X, Y;A)$ is defined as the quotient 
\begin{equation}\label{eqn:LW-01}
\CH_0(X,Y;A) = \frac{\sZ_0(X \setminus (Y^\star \cup A))}{\sR_0(X,Y;A)}.
\end{equation}
We have a canonical surjection
\[
\CH^{LW}_0(X,Y;A) \surj \CH_0(X,Y;A).
\]

\subsection{Chow groups with modulus}\label{sec:LWGM}
Let $X \in \Sch_k$ be an integral scheme of dimension $d \ge 1$ which is
regular in codimension one.
Let $D \subset X$ be a closed subscheme such that $D^o$ is a Cartier divisor on $X^o$.
We fix a reduced closed subscheme $A \subset X$ such that
$X_\sing \subset A$ and $\dim(A) \le d-2$. 
Let $C$ be an integral normal curve.
We shall say that a morphism $\nu \colon C \to X$ is `admissible' if
it is finite, $\nu(C) \not\subset D$ and $\nu(C) \cap A = \emptyset$.
Given such a morphism and $f \in k(C)^{\times}$, we say that $f$ has modulus $D$
if it lies in $\Ker(\sO^{\times}_{C, E} \surj \sO^{\times}_E)$, where $E = \nu^*(D)$.
We let $\sR_0(X|D;A)$ be the subgroup of $\sZ_0(X^o \setminus (D \cup A))$
generated by $\nu_*(\divf(f))$, where $\nu \colon C \to X$ is admissible and
$f \in k(C)^{\times}$ has modulus $D$.
We let
\begin{equation}\label{eqn:LW-mod-0}
  \CH_0(X|D;A) = \frac{\sZ_0(X^o \setminus (D \cup A))}{\sR_0(X|D;A)}.
\end{equation}

When $A = X_\sing$, we shall often write $\sR_0(X|D;A)$ and $\CH_0(X|D;A)$ as
 $\sR_0(X|D)$ and $\CH_0(X|D)$, respectively. 
Note that $\CH_0(X|D)$ coincides with the original definition
(see \cite[Definition~1.6]{Kerz-Saito}) of the Chow group with modulus when
$X \in \Sm_k$. 
We let $\CH^{\rm mod}_0(X|D;A)$ be the quotient of $\sZ_0(X^o \setminus (D \cup A))$
by the subgroup $\sR^{\rm mod}_0(X|D;A)$, where we allow only those admissible
morphisms $\nu \colon C \to X$ which have the property that $\nu^*(D^\dagger)$
is regular.
We have then the canonical surjection
\begin{equation}\label{eqn:LW-mod-00}
  \CH^{\rm mod}_0(X|D;A) \surj  \CH_0(X|D;A).
\end{equation}

We shall show that this map is in fact an isomorphism (see \corref{cor:mL-lci}).

\vskip .3cm

In this paper, we shall consider a Levine-Weibel analogue of $\CH_0(X|D;A)$.
We let  $\sR^{LW}_0(X|D;A)$ be the subgroup of $\sZ_0(X^o \setminus (D \cup A))$
 generated by $\divf(f)$, where:
\begin{enumerate}
\item
$C \subset X$ is an integral curve with the property
that $C \not\subset D, \ C \cap A = \emptyset$;
\item
  $C$ is regular at every point of $E := C \cap D$;
\item
  $C \cap D^\dagger$ is regular;
\item
  $f \in \Ker(\sO^{\times}_{C,E} \surj \sO^{\times}_E)$.
\end{enumerate}
Note that the conditions (1) and (2) together
force $C$ to be a Cartier curve on $X$ relative to $D \cup A$. We let 
\[
\CH^{LW}_0(X|D;A)=\frac{\sZ_0(X^o \setminus (D \cup A))}{\sR^{LW}_0(X|D;A)}.
\]
We shall write $\CH^{LW}_0(X|D;X_\sing)$ as $\CH^{LW}_0(X|D)$.
We shall call $\CH^{LW}_0(X|D;A)$ the `Levine-Weibel Chow group with modulus' of
$(X,D)$. If $X$ is projective over $k$, we let $\CH_0(X|D;A)_0$ denote the
kernel of the degree map $\CH_0(X|D;A) \to \Z$.

\begin{remk}\label{remk:LW-properties}
  We remark that just like the Levine-Weibel Chow group without modulus,
  $\CH^{LW}_0(X|D;A)$ may not have
  good functorial properties such as finite push-forward
  (except for closed immersions). On the other hand,
it has some advantages over $\CH_0(X|D;A)$. For instance, it admits
cycle class map to $H^d_\nis(X, \sK^M_{d,(X,D)})$ when $k$ is infinite.
At any rate, there is always a canonical surjection
\begin{equation}\label{eqn:Mod-old}
  \phi_{X|D} \colon  \CH^{LW}_0(X|D;A) \surj   \CH_0(X|D;A),
\end{equation}
which we expect to be an isomorphism
when $k$ is infinite. We shall show in this
paper that this is indeed the case when $d = 2$ (see Remark~\ref{remk:Surf-**}).
\end{remk}


\begin{remk}
For a reduced closed subscheme $A \subset X$ such that
$X_\sing \subset A$ and $\dim(A) \le d-2$, we have a canonical map
\[
\CH_0(X|D, A) \to \CH_0(X|D).
\]
For a general $A$, we can not say much about this homomorphism. But it follows from \corref{cor:mL-lci} that the above map is an isomorphism when $A \subset X_{\sing} \cup D_{\red}$. Indeed, in the corollary, we shall prove the stronger claim that the composition of the maps 
\[
 \CH^{\rm mod}_0(X|D;A) \surj \CH_0(X|D, A) \to \CH_0(X|D).
\]
is an isomorphism. A similar result for the Levine-Weibel Chow groups with modulus can be deduced from 
\corref{cor:ML-0}. 
\end{remk}

\subsection{The double and its Cartier curves}\label{sec:double}
 Let $D \subset X$ be as in \S~\ref{sec:LWGM}.
 Recall from \cite[\S~2.1]{Binda-Krishna}
  that the double (or join) of $X$ along $D$ is the push-out $S(X,D):= X \amalg_D X$.
  We shall also write $S(X,D)$ as $S_X$. One knows that
  \begin{equation}\label{eqn:Double-0}
    \xymatrix@C.8pc{
      D \ar[r]^-{\iota} \ar[d]_-{\iota} & X_+ \ar[d]^-{\iota_+} \\
      X_- \ar[r]^-{\iota_-} & S_X}
  \end{equation}
  is a bi-Cartesian square. Moreover, there is a finite morphism
  $\Delta \colon S_X \to X$ whose composition with $\iota_\pm$ is identity.
  We shall use the following result (see \cite[Proposition~2.4]{Binda-Krishna})
  frequently in this paper.
 \begin{prop}\label{prop:double*}
Suppose that $D$ is a Cartier divisor on $X$. Then the following
hold:
\begin{enumerate}
\item
$\Delta$ is finite and flat.
\item
$S_X$ is Cohen-Macaulay if $X$ is so.
\item
  If $f\colon Y \to X$ is lci 
  along $D$, then $Y {\underset{X}\times} S_X \to S_X$ is lci along $D$.
\item
  If $f\colon Y \to X$ is a morphism such that $f^*(D)$ is a Cartier divisor on $Y$,
  then the canonical map $S_Y \to Y {\underset{X}\times} S_X$ is an isomorphism.
\end{enumerate}
\end{prop}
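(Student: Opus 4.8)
The plan is to reduce all four assertions to a single explicit local computation of the double and then read each statement off a free-module presentation. First I would pass to the affine-local situation: the pushout $S_X = X \amalg_D X$ along the closed immersion $D \hookrightarrow X$ is Zariski-local on $X$, and finiteness, flatness, Cohen--Macaulayness, the lci property, and the base-change isomorphism may all be checked locally on the base. Hence I may assume $X = \Spec A$, and, since $D$ is Cartier, that $D = \Spec A/(t)$ for a single nonzerodivisor $t \in A$. In this case the pushout is affine and corresponds to the fibre-product ring $B := A \times_{A/(t)} A = \{(a,b) \in A \times A : a - b \in (t)\}$; the inclusions $\iota_\pm$ correspond to the two projections $B \to A$, and the fold map $\Delta$ corresponds to the diagonal ring homomorphism $A \to B$, $a \mapsto (a,a)$.

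The crux, from which (1) and (2) both follow, is that $B$ is free of rank two as an $A$-module (via $\Delta$) on the basis $(1,1)$ and $(0,t)$: given $(a,b) \in B$, write $b - a = tc$; since $t$ is a nonzerodivisor the element $c$ is unique, whence $(a,b) = a\cdot(1,1) + c\cdot(0,t)$, and the same uniqueness gives linear independence. This proves (1), as $\Delta$ is then finite and locally free, hence flat. For (2), $\Delta$ is flat and finite, so each of its fibres is a finite, hence Artinian, $k(x)$-scheme and in particular Cohen--Macaulay; the standard criterion for flat morphisms (the source is Cohen--Macaulay iff the base and all fibres are) then shows that $S_X$ is Cohen--Macaulay whenever $X$ is. Alternatively, freeness gives $\depth_A B = \depth_A A$ and $\dim B = \dim A$, and depth is insensitive to the finite extension $A \to B$.

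For (3), I would observe that $Y \times_X S_X \to S_X$ is precisely the base change of $f \colon Y \to X$ along $\Delta \colon S_X \to X$; since the lci property is stable under arbitrary base change, and since any point of $Y \times_X S_X$ lying over $D \subset S_X$ maps to a point of $Y$ lying over $D \subset X$ where $f$ is lci, the projection is lci along $D$. For (4), writing $Y = \Spec A'$ locally, the hypothesis that $f^*(D)$ is Cartier means the image of $t$ is a nonzerodivisor in $A'$, so the local model of $S_Y$ is $B' := A' \times_{A'/(t)} A'$, which is $A'$-free on $(1,1)$ and $(0,t)$ by the computation above. The canonical morphism $S_Y \to Y \times_X S_X$ corresponds to an $A'$-algebra map $A' \otimes_A B \to B'$; since $A' \otimes_A B$ is $A'$-free on $1 \otimes (1,1)$ and $1 \otimes (0,t)$, which map to the basis $(1,1)$ and $(0,t)$ of $B'$, the map carries a basis to a basis and is therefore an isomorphism.

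The conceptual heart, and the step I expect to need the most care, is part (4): forming the double does \emph{not} commute with base change for a general closed subscheme, and it is precisely the Cartier hypotheses on $D$ and $f^*(D)$ — which force the local freeness of $B$ over $A$ established in (1) — that make the base change well-behaved. The remaining delicacy is purely bookkeeping: one must check that the affine-local identifications of $S_X$, of $\Delta$, and of the canonical map $S_Y \to Y \times_X S_X$ are independent of the chosen local generator $t$ and glue to the stated global objects.
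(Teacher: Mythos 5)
Your local model is the right one and is essentially the standard argument: the paper does not prove this proposition itself but cites it from Binda--Krishna, and their proof likewise rests on the affine computation $B = A\times_{A/(t)}A$ together with the resulting exact sequence $0 \to B \to A\times A \to A/(t) \to 0$ (which reappears in this paper as ~\eqref{eqn:Cartier-0}). Your observation that $B$ is free of rank two over $A$ on the basis $(1,1)$, $(0,t)$ is correct and cleanly delivers (1), (2) and (4); the verification in (4) that the canonical $A'$-algebra map $A'\otimes_A B \to A'\times_{A'/(tA')}A'$ carries a basis to a basis (using that the image of $t$ is a nonzerodivisor in $A'$, which is exactly the hypothesis that $f^*(D)$ is Cartier) is the right way to see that forming the double commutes with this base change.

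The one step that does not stand as written is (3). The claim that ``the lci property is stable under arbitrary base change'' is false: for instance, the regular immersion $V(x)\hookrightarrow \A^1_k$ is lci over $\A^1_k$, but its base change along $\Spec(k[x]/(x^2)) \to \A^1_k$ is the immersion $\Spec(k) \hookrightarrow \Spec(k[x]/(x^2))$, whose ideal $(x)$ consists of zerodivisors and is not (Koszul-)regular, so the base change is not lci. Regular immersions, and hence lci morphisms, are only preserved under \emph{flat} base change (or more generally Tor-independent base change). Your argument for (3) is therefore incomplete unless you invoke part (1): the projection $Y\times_X S_X \to S_X$ is the base change of $f$ along $\Delta$, and $\Delta$ is flat by (1), so the flat-base-change stability of lci morphisms applies. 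With that single correction the proof is complete; the remaining gluing and independence-of-$t$ points you flag are indeed routine, since the pushout along a closed immersion is Zariski-local on $X$ and freeness of $\Delta_*\sO_{S_X}$ does not depend on the chosen local equation for $D$.
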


If $C_\pm$ are two closed subschemes of $X$ such that $C_+ \cap D = C_- \cap D$ as
  closed subschemes, then the join $C_+ \amalg_D C_-$ along $D \cap C_\pm$ is
  canonically a closed subscheme of $S_X$.
While proving the main results, we shall often need to recognize Cartier curves on
$S_X$. The aim of the rest of this subsection is to prove some results in this
direction. The main results that we shall use later are \lemref{lem:Cartier-higher} and
\propref{prop:Cartier-higher-*}. 
We fix a field $k$.

\begin{lem}\label{lem:Cartier}
Let $X \in \Sch_k$ be an integral scheme of dimension $d \ge 1$.
Let $D \subset X$ be an effective Cartier divisor on $X$.
Suppose that $C_\pm\subset X$ are two Cartier divisors such that 
$C_\pm \cap D$ is a Cartier divisor on $C_\pm$ and $C_+ \cap D = C_- \cap D$. 
Then $C_+ \amalg_D C_-$ is a Cartier divisor on $S_X$.
\end{lem}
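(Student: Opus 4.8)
The statement is local on $S_X$, so the plan is to verify at each point that the ideal sheaf of $Z := C_+ \amalg_D C_-$ is locally principal and generated by a nonzerodivisor. Away from $D$ there is nothing to do: $S_X \setminus D$ is the disjoint union of two copies of $X \setminus D$, and on the $\pm$-copy the subscheme $Z$ restricts to $C_\pm$, which is Cartier on $X$ by hypothesis. Thus the only real content is at a point $x \in D$, and there I would pass to the local ring.

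Fix $x \in D$ and set $A = \sO_{X,x}$. Since $D$ is an effective Cartier divisor, it is cut out near $x$ by a single nonzerodivisor $t \in A$, and the pushout description of the double (\propref{prop:double*}) identifies $\sO_{S_X,x}$ with $R := A \times_{A/(t)} A = \{(a,b) \in A \times A : a \equiv b \pmod t\}$. Let $g_\pm \in A$ be local equations for $C_\pm$; these are nonzerodivisors since the $C_\pm$ are Cartier on $X$. Using $C_+ \cap D = C_- \cap D$, one checks that the ideal of $Z$ in $R$ is $J = \{(a,b) \in R : a \in (g_+),\ b \in (g_-)\}$. If $x \notin C_+ \cap D$ then, since $C_+ \cap D = C_- \cap D$, both $g_+$ and $g_-$ are units and $J = R$, so $Z$ is empty near $x$; hence I may assume $x \in C_+ \cap D = C_- \cap D$, so that $g_+, g_-, t \in \fm_x$.

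The key point is to produce a single nonzerodivisor generating $J$. First, the two Cartier hypotheses translate into regularity of sequences: $g_+$ is a nonzerodivisor in $A$, and $t$ is a nonzerodivisor in $A/(g_+)$ because $C_+ \cap D$ is Cartier on $C_+$, so $(g_+, t)$ is a regular sequence; permuting it in the Noetherian local ring $A$ shows that $\bar g_+$ is a nonzerodivisor in $A/(t)$, and likewise $\bar g_-$. Since $C_+ \cap D = C_- \cap D$ gives $(\bar g_+) = (\bar g_-)$ in $A/(t)$, comparing these two nonzerodivisor generators of the same principal ideal produces a unit $\bar u \in (A/(t))^\times$ with $\bar g_+ = \bar u\, \bar g_-$; lifting $\bar u$ to a unit $u \in A^\times$ yields $g_+ - u g_- \in (t)$. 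I would then propose the generator
\[
  h := (g_+,\, u g_-) \in A \times A .
\]
Because $g_+ - u g_- \in (t)$, the element $h$ lies in $R$, and clearly $h \in J$; it is a nonzerodivisor since both coordinates are nonzerodivisors in $A$.

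Finally I would check $J = (h)$. Given $(a,b) \in J$, write $a = g_+ p$ and $b = g_- q$; the only possible factorization forces $(a,b) = h \cdot (p, u^{-1} q)$, so it remains to verify $(p, u^{-1} q) \in R$, i.e. $up - q \in (t)$. This follows from the congruence $a \equiv b \pmod t$: substituting $g_+ = u g_- + t w$ gives $g_-(up - q) \in (t)$, and cancelling the nonzerodivisor $\bar g_-$ in $A/(t)$ yields $up - q \in (t)$. Thus $J = (h)$ with $h$ a nonzerodivisor, so $Z$ is Cartier at $x$. I expect the main obstacle to be exactly this construction at points of $D$: naively one wants to splice $g_+$ and $g_-$, but $(g_+, g_-)$ need not lie in $R$. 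The device that repairs this is the comparison unit $u$ obtained by permuting the regular sequences $(g_\pm, t)$, which is where both Cartier hypotheses on $C_\pm \cap D$ are genuinely used.
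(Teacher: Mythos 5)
Your proof is correct and follows essentially the same route as the paper: reduce to the local ring at a point of $D$, use the permutability of the regular sequences $(g_\pm,t)$ to see that $\bar g_\pm$ are nonzerodivisors in $A/(t)$, produce a comparison unit so that $(g_+,ug_-)$ lies in the fibre product ring $R$, and then show this single nonzerodivisor generates the ideal by cancelling $\bar g_-$ modulo $t$. The only cosmetic difference is that you extract the unit directly from the fact that two nonzerodivisor generators of a principal ideal differ by a unit, whereas the paper writes $a_-=\alpha a_+ + \beta f$ and argues $\alpha\in A^\times$ by a Nakayama-style contradiction; the two devices are interchangeable.
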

\begin{proof}
Since this is a local question, we can assume that $X = \Spec(A)$ is local and 
$D = V((f)),  C_\pm = V((a_\pm))$ are all principal Cartier divisors on $X$.  
We are given that $\{a_\pm, f\}$ are regular sequences in the local ring $A$
and $(a_+,f) = (a_-,f)$.
Recall here that every permutation of a regular sequence in a 
Noetherian local ring is also a regular sequence.
We can write $a_- = \alpha a_+ + \beta f$. Note that $\alpha \neq 0$
by our assumption. 

We claim that $\alpha \in A^{\times}$. Suppose on the contrary that $\alpha
\in \fm$, where $\fm$ is the maximal ideal of $A$.
Let $\ov{a}$ denote the residue class of an element $a \in A$ in the quotient ring
$A' = A/{(f)}$. Then we get $\ov{a_-} \in \fm (\ov{a_+})$. But this is not 
possible since $(\ov{a_+}) = (\ov{a_-})$ and $\ov{a_\pm}$ is not a zero-divisor in $A'$.

By replacing $a_+$ by $\alpha a_+$, we can assume that
$C_\pm = V((a_\pm))$ and $a_- = a_+ + \beta f$ for some $\beta \in A$.
We now recall that there is an exact sequence of $R$-modules
\begin{equation}\label{eqn:Cartier-0}
0 \to R \to A \times A \xrightarrow{\phi} A' \to 0,
\end{equation}
where $R$ is the coordinate ring of $S_X$ and $\phi((a, b)) = \ov{a} - \ov{b}$. 
It follows that $(a_+, a_-) \in R$. Let $R'$ be the coordinate ring of 
$C_+ \amalg_D C_-$ and let $I = \Ker(R \surj R')$.
We then have an exact sequence
\begin{equation}\label{eqn:Cartier-1}
  0 \to I \to (a_+)  \times (a_-) \xrightarrow{\phi}
  \frac{(a_\pm)}{(a_\pm) \cap (f)} \to 0.
\end{equation}

It is clear that $a:= (a_+, a_-) \in I$. We now claim that $I = (a)$.
Let $(x_+, x_-) \in I$ be any element. This means that
$x_\pm = \alpha_\pm a_\pm$ for some $\alpha_\pm \in A$ and $x_+- x_- \in (f)$.
We need to show that $\alpha_+ - \alpha_- \in (f)$ so that
$(\alpha_+, \alpha_-) \in R$.

To show this, we write
$\alpha_+ a_+ - \alpha_- a_- = \gamma f$ for some $\gamma \in A$.
This implies that
$\alpha_+ a_+ - \alpha_-(a_+ + \beta f) = \gamma f$.
Equivalently, $(\alpha_+ - \alpha_-)a_+ \in (f)$. But this forces
$\alpha_+ - \alpha_-$ to lie in $(f)$ because $\ov{a_+}$ is not a zero-divisor in $A'$.
We have thus shown that $I = (a)$. It is clear from ~\eqref{eqn:Cartier-0} that
$a$ is not a zero-divisor in $R$ because $a_\pm$ is not a zero-divisor in $A$.
The lemma is now proven.
\end{proof}

To find Cartier curves on the double when $d \ge 3$, we need some 
results. We begin with the following lemma in local commutative algebra.

\begin{lem}\label{lem:Algebra-0}
Let $A$ be a Noetherian local ring. Let $I = (a_1, \ldots , a_r)$ and $J$ be two ideals
of $A$ such that $A/J$ is a discrete valuation ring and $\{a_1, \ldots , a_r\}$ is
a regular sequence in $A$. We can then find a regular sequence
$\{b_1, \ldots , b_r\}$ in $A$ such that $I =  (b_1, \ldots , b_r)$ and
$b_i \in I \cap J$ for $i > 1$.
\end{lem}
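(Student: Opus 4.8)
The plan is to work modulo $J$, exploiting that $A/J$ is a discrete valuation ring, and to produce the desired generators by an elementary unipotent transformation of $\{a_1, \dots, a_r\}$ that clears the images of all but one generator in $A/J$.

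First I would denote by $\ov{a}$ the image of $a \in A$ in the DVR $A/J$, with valuation $v$ (setting $v(0) = \infty$). Since $A/J$ is a DVR, the finitely generated ideal $(\ov{a}_1, \dots, \ov{a}_r)$ is principal, generated by whichever $\ov{a}_i$ has least valuation. After reordering the $a_i$ --- permissible because any permutation of a regular sequence in a Noetherian local ring is again a regular sequence --- I may assume $v(\ov{a}_1) = \min_i v(\ov{a}_i)$. If $\ov{a}_1 = 0$, then $I \subseteq J$ and we simply take $b_i = a_i$, so assume $\ov{a}_1 \neq 0$. Then for each $i > 1$ we have $v(\ov{a}_i) \ge v(\ov{a}_1)$, hence $\ov{a}_i \in (\ov{a}_1)$ in $A/J$; write $\ov{a}_i = \ov{u}_i\, \ov{a}_1$ and lift $\ov{u}_i$ to some $c_i \in A$.

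Next I would set $b_1 = a_1$ and $b_i = a_i - c_i a_1$ for $i > 1$. For $i > 1$, the image of $b_i$ in $A/J$ is $\ov{a}_i - \ov{u}_i \ov{a}_1 = 0$, so $b_i \in J$; and clearly $b_i \in I$, whence $b_i \in I \cap J$. Since $a_i = b_i + c_i b_1$, the two families generate the same ideal, so $(b_1, \dots, b_r) = I$.

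The main point to verify, and the step I expect to require the most care, is that $\{b_1, \dots, b_r\}$ is again a regular sequence. This I would deduce from the standard characterization: $b_1 = a_1$ is a nonzerodivisor because $a_1, \dots, a_r$ is regular, and modulo $(b_1) = (a_1)$ each $b_i$ ($i > 1$) reduces to $a_i$, so the images of $b_2, \dots, b_r$ in $A/(a_1)$ coincide with those of $a_2, \dots, a_r$, which form a regular sequence there precisely because $a_1, \dots, a_r$ is regular in $A$. Thus $b_1$ is a nonzerodivisor and $\{b_2, \dots, b_r\}$ is regular on $A/(b_1)$, so $\{b_1, \dots, b_r\}$ is a regular sequence. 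The only mild subtleties are the bookkeeping of the degenerate case $I \subseteq J$ and the harmless reordering at the start; the substantive content is packaged entirely into the unipotent transformation $b_i = a_i - c_i a_1$, which simultaneously fixes the ideal $I$ and preserves regularity.
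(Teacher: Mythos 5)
Your proof is correct and follows essentially the same route as the paper's: reorder so that $a_1$ has minimal valuation modulo $J$ (the paper's condition $m_1 \le m_i$), then apply the unipotent transformation $b_i = a_i - c_i a_1$ where $c_i$ lifts $\ov{a}_i/\ov{a}_1$ (the paper's $\alpha_i = u_1^{-1}u_i\pi^{m_i-m_1}$), and observe that this preserves both the ideal and regularity. Your valuation-theoretic phrasing is a little cleaner, and you spell out the regularity check that the paper leaves as ``clear,'' but there is no substantive difference.
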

\begin{proof}
  In the proof, we shall use (as before)
  the fact that every permutation of a regular sequence in a 
Noetherian local ring is also a regular sequence.
We can assume (after a permutation) that $a_1 \notin J$ because there is
nothing to prove otherwise.
Suppose that $a_i \notin J$ for $1 \le i \le r'$ and $a_i \in J$ for $i > r'$.
If $r' = 1$, then we can take $b_i = a_i$ for $i \ge 1$ and we are done.
We therefore assume that $r' >1$.

Let $\ov{a}$ denote the residue class of any element $a \in A$ in the quotient
$A/J$. Let $\pi \in A$ be an element such that $\ov{\pi}$ is a uniformizing
parameter for the dvr $A/J$.
For $i \le r'$, we can then write
$a_i = a'_i + u_i\pi^{m_i}$, where $u_i \in A^{\times}, a'_i \in J$ and $m_i \ge 1$.
We can assume that $m_{1} \le m_i$ for all $i \le r'$.

We now let $\alpha_i = u^{-1}_{1}u_i\pi^{m_i -m_1}$ and
$b_i = a_i - \alpha_i a_{1}$ for $2 \le i \le r'$. Then we get
\[
b_i = a'_i + u_i\pi^{m_i} - \alpha_i(a'_{1} + u_{1}\pi^{m_{1}})
= (a'_i - \alpha_i a'_{1}) + (u_i\pi^{m_i} -
u^{-1}_{1}u_i\pi^{m_i -m_1} u_{1}\pi^{m_1})
  = a'_i - \alpha_i a'_1.
\]
In particular, $b_i \in I \cap J$ for $2 \le i \le r'$.
Letting $b_i = a_i$ for $i = 1$ and $i > r'$, it is clear that
$\{b_1, b_2, \ldots , b_r\}$ is a regular sequence in $A$ and
$I = (b_1, b_2, \ldots , b_r)$. This concludes the proof.
\end{proof}

Recall that a Noetherian scheme $X$ is called an $R_i$-scheme if it is regular
in codimension $i$. It is called an $S_i$-scheme if $\sO_{X,x}$ satisfies
Serre's $S_i$ condition (see \cite[Chapter~23, pp.~183]{Matsumura}) for every
$x \in X$. We shall deduce the following results from the Bertini theorems of
\cite[\S~2, 3]{Ghosh-Krishna-Bertini}.

\begin{lem}\label{lem:Bertini-0}
Assume that $k$ is infinite and $X \subset \P^n_k$ is a reduced quasi-projective
scheme of dimension $d \ge 3$. Let $C \subset X$ be a reduced curve.
Let $Z \subset C_\reg \cap X^o$ be a finite closed subset such that
    the intersection of $Z$ with every irreducible component of $C$ which meets
    $X^o$ is nonempty.
Then for all $N \gg 0$, there is a dense open subscheme $\sU$ of the linear system
of hypersurfaces of degree $N$ containing $C$ in $\P^n_k$ such that every
$H \in \sU(k)$ has the property that $H \cap X^o$ is regular away from $C$ and
is also regular at every point of $Z$.
In particular, there is a finite closed subset $Z_H \subset C$ (depending on
$H \in \sU(k)$) such that $(H\cap X^o) \setminus Z_H$ is regular.
\end{lem}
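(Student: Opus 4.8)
The plan is to realize the linear system of degree-$N$ hypersurfaces containing $C$ as $\P(H^0(\P^n_k, \sI_C(N)))$, where $\sI_C \subset \cO_{\P^n_k}$ is the ideal sheaf of the reduced curve $C$, and to produce $\sU$ as a finite intersection of dense open subsets of this projective space, one for each required regularity condition. By Serre vanishing, for all $N \gg 0$ the twisted sheaf $\sI_C(N)$ is globally generated and, away from $C$, the system $|\sI_C(N)|$ separates points and tangent vectors; moreover the global sections of $\sI_C(N)$ generate $2$-jets at each of the finitely many points $z \in Z$, i.e. the evaluation map $H^0(\P^n_k, \sI_C(N)) \to \sI_{C,z}/\fm_z^2 \sI_{C,z}$ is surjective. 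I would fix these asymptotic freeness statements at the outset; they are where the choice of $N$ does its work.

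For regularity away from $C$ I would invoke the Bertini theorem of Ghosh-Krishna directly. Since $X$ is reduced and $k$ is infinite, their results, applied to the quasi-projective scheme $X$ together with the system $|\sI_C(N)|$ (which is base-point free away from $C$), guarantee that a generic member $H$ has the property that $H \cap X^o$ is regular at every point of $X^o \setminus C$. This is precisely the step where the absence of the hypothesis that $k$ be perfect forces the use of the \emph{regularity} form of Bertini from \cite{Ghosh-Krishna-Bertini} rather than a classical generic-smoothness argument. The set of such $H$ is a dense open $\sU_0 \subset \P(H^0(\P^n_k, \sI_C(N)))$.

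For regularity at the points of $Z$ I would argue locally. Fix $z \in Z \subset C_\reg \cap X^o$; then $\sO_{X^o,z}$ is a regular local ring and $C$ is a regular curve through $z$, so the conormal space $(\sI_{C,z} + \fm_z^2)/\fm_z^2$ has dimension $d - 1 \ge 2$ inside the cotangent space $\fm_z/\fm_z^2$. If $h \in \sO_{X^o,z}$ is a local equation of $H$, then $h \in \sI_{C,z}$ because $C \subset H$, and $H \cap X^o = \Spec(\sO_{X^o,z}/(h))$ is regular at $z$ exactly when the image of $h$ in $\fm_z/\fm_z^2$ is nonzero. By the $2$-jet generation fixed above, the sections of $\sI_C(N)$ realize every class in this nonzero conormal space, so the condition $h \in \fm_z^2$ cuts out only a proper linear subspace; its complement is a dense open $\sU_z$. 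Setting $\sU = \sU_0 \cap \bigcap_{z \in Z} \sU_z$, a finite intersection of dense opens (nonempty on $k$-points since $k$ is infinite), yields the asserted dense open, and every $H \in \sU(k)$ has $H \cap X^o$ regular away from $C$ and at each point of $Z$.

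For the final assertion I would set $Z_H = (H \cap X^o)_\sing$. Since $H \cap X^o$ is regular off $C$, this locus is closed in $C^o = C \cap X^o$, and the hypothesis that $Z$ meets every component of $C$ meeting $X^o$ is exactly what makes it finite: on each such component $C_i$ there is a point $z \in Z \cap C_i$ at which $H \cap X^o$ is regular, and as regularity is an open condition, $Z_H \cap C_i$ is a proper closed subset of the curve $C_i$, hence finite; since $C$ has finitely many components, $Z_H$ is finite. The main obstacle I anticipate is not the local conormal bookkeeping but verifying that the Bertini theorem of \cite{Ghosh-Krishna-Bertini} applies in precisely the needed form — over a possibly imperfect infinite field, with prescribed center $C$ and the simultaneous regularity demand at $Z$ — and that the freeness statements of the first paragraph hold with $C$ only reduced rather than smooth.
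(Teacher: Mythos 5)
Your proposal follows essentially the same route as the paper's proof: the Bertini theorem of Ghosh--Krishna gives a dense open $\sU_1$ for which $H\cap X^o$ is regular away from $C$, and for each $z\in Z$ the condition that a local equation of $H$ lie in $\fm_z^2$ cuts out a proper linear subspace of the linear system (the paper's kernel of $\psi_x$), so its complement is a dense open $\sU_z$. The only cosmetic differences are that the paper works with the ideal sheaf of the closure $\ov{C}$ in $\P^n_k$ (since $C$ need not be closed in $\P^n_k$) and defines $Z_H$ via an open neighborhood of $Z$ rather than as the singular locus, but your argument is correct.
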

\begin{proof}
Let $\ov{X}$ (resp. $\ov{C}$) be the scheme theoretic closure of $X$ (resp. $C$) in
  $\P^n_k$ so that $\ov{C}$ is a reduced projective curve.
  Let $\sI_{\ov{C}} \subset \sO_{\P^n_k}$ be the sheaf of ideals defining $\ov{C}$.
  By the proof of \cite[Proposition~2.3]{Ghosh-Krishna-Bertini}, it follows that for all
  $N \gg 0$, there is a dense open subscheme $\sU_1$ of the linear system
  $|H^0(\P^n_k, \sI_{\ov{C}}(N))|$ such that every $H \in \sU_1(k)$ has the property
  that $H \cap X^o$ is regular away from $C$.

  We now fix a point $x \in Z$. Let $\fm_x$ be the maximal ideal of $\sO_{X,x}$
  and $S = \Spec(k(x))$.
  Since $X$ and $C$ are both regular at $x$, it follows that
  the canonical restriction map $\sI_{\ov{C}} \to {\sI_S}/{\sI^2_S}$ can not
  be zero. 
  In particular, the canonical map
  $\sI_{\ov{C}}(N) \to {\sI_S}/{\sI^2_S}(N)$ is not zero.
  Since $H^0(\P^n_k, \sI_{\ov{C}}(N))\otimes_k \sO_{\P^n_k} \surj \sI_{\ov{C}}(N)$
  for all $N \gg 0$, 
we conclude that the canonical map of $k$-vector spaces
  \[
    \psi_x \colon H^0(\P^n_k, \sI_{\ov{C}}(N)) \to    {\fm_x}/{\fm^2_x}(N)
  \]
is not zero for all $N \gg 0$.

If we  choose any element $f \in H^0(\P^n_k, \sI_{\ov{C}}(N))$,
then $f$ will not die in ${\fm_x}/{\fm^2_x}(N)$ if and only if
the hypersurface $H$ defined by $f$
contains $C$ and $H \cap X$ is regular at $x$.
Note that this uses our assumption that $x \in X^o$.
Since $\P_k({\rm Ker}(\psi_x))$ is a proper closed subscheme of
$|H^0(\P^n_k, \sI_{\ov{C}}(N))|$, 
it follows that there is a dense open subscheme
$\sU_x \subset |H^0(\P^n_k, \sI_{\ov{C}}(N))|$ for all $N \gg 0$ such that $H \cap X$
is regular at $x$ for every $H \in \sU_x(k)$.
We let $\sU = ({\underset{x \in Z} \cap} \sU_x) \cap \sU_1$.
Then $\sU \subset |H^0(\P^n_k, \sI_{\ov{C}}(N))|$ is open dense and
every $H \in \sU(k)$ has the property that $H \cap X^o$ is regular away from $C$
and is also regular at every point of $Z$.
But such an $H$ must have the
property that $H \cap X^o$ is regular in an open neighborhood $V_H$ of $Z$ in $X^o$.
Letting $Z_H = C \setminus V_H$, we can conclude the proof of the lemma.
\end{proof}

\begin{lem}\label{lem:Bertini}
  Assume that $k$ is infinite and $X \subset \P^n_k$ is an integral quasi-projective
  Cohen-Macaulay scheme of dimension $d \ge 3$. Let $C \subset X$ be a reduced curve
  and let $A \subset X$ be a reduced closed subset.
  Then there is a dense open subscheme $\sU$ of the linear system of hypersurfaces of
  all degrees $N \gg 0$ containing $C$ in $\P^n_k$ such that every
  $H \in \sU(k)$ satisfies the following:
  \begin{enumerate}
  \item
    $H \cap X$ is an integral Cohen-Macaulay scheme of dimension $d-1$.
  \item
    There is a finite closed subset $Z_H \subset C$ (depending on $H$) such that
    $(H\cap X^o) \setminus Z_H$ is regular.
  \item
    $H \cap X$ intersects $A$ properly away from $C$. 
  \end{enumerate}
\end{lem}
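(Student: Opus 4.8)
The plan is to produce $\sU$, for each $N\gg 0$, as the intersection of three dense open subsets of the linear system $|H^0(\P^n_k,\sI_{\ov{C}}(N))|$ of degree-$N$ hypersurfaces containing $\ov{C}$ (the scheme-theoretic closure of $C$ in $\P^n_k$), handling one conclusion at a time and then verifying the conclusions pointwise on $\sU(k)$. Conclusion (2) is immediate: it is precisely the last assertion of \lemref{lem:Bertini-0}, applied after choosing a finite set $Z\subset C_\reg\cap X^o$ meeting every irreducible component of $C$ that meets $X^o$ (such $Z$ exists because each such component is a curve, so $C_\reg\cap X^o$ is a nonempty open subset of it). This produces a dense open $\sU_2$ with a finite $Z_H\subset C$ such that $(H\cap X^o)\setminus Z_H$ is regular.

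For conclusion (1), I first observe that the Cohen-Macaulay property and the dimension are automatic as soon as $H\not\supset X$: since $X$ is integral, the section of $\sO_X(N)$ cutting out $H\cap X$ is then a non-zero-divisor, so $H\cap X$ is Cohen-Macaulay of pure dimension $d-1$, as cutting a Cohen-Macaulay scheme by a non-zero-divisor lowers depth and dimension by one. The condition $H\not\supset X$ is dense open for $N\gg 0$ because $\dim C<\dim X$. Reducedness then follows from Serre's criterion: $H\cap X$ satisfies $S_1$ (being Cohen-Macaulay), and it satisfies $R_0$ because, for $H\in\sU_2(k)$, the regular locus $(H\cap X^o)\setminus Z_H$ is dense in $H\cap X$, so it contains the generic point(s); density uses that $H$ avoids the finitely many components of $X_\sing$ off $C$ (a dimension count as in (3) below), forcing $\dim(H\cap X_\sing)\le d-2$, and that $d\ge 3$ makes $\dim(H\cap X)\ge 2$ so that deleting the finite set $Z_H$ preserves density. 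It remains to secure irreducibility, which I obtain from the Bertini irreducibility theorem of \cite[\S~3]{Ghosh-Krishna-Bertini} for the base-point conditions imposed by $\ov{C}$; this gives a dense open $\sU_1$ on which $H\cap\ov{X}$, hence its dense open $H\cap X$, is irreducible. On $(\sU_1\cap\sU_2)(k)$ the section $H\cap X$ is thus integral and Cohen-Macaulay, giving (1).

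For conclusion (3), I argue by a dimension count over the finitely many irreducible components $A_i$ of $A$ with $A_i\not\subset C$. For each such $A_i$ fix a point $a\in A_i\setminus\ov{C}$; since $\sI_{\ov{C}}(N)$ is globally generated for $N\gg 0$ by \cite[\S~2]{Ghosh-Krishna-Bertini}, the base locus of the linear system is exactly $\ov{C}$, so there is a hypersurface through $C$ missing $a$, whence the $H$ containing $A_i$ form a proper linear subspace. As $k$ is infinite, removing these finitely many proper subspaces yields a dense open $\sU_3$ on which no $H$ contains a component of $A$ off $C$, so that $H\cap X$ meets $A$ properly away from $C$. Setting $\sU=\sU_1\cap\sU_2\cap\sU_3$ then completes the proof.

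The main obstacle is the irreducibility in (1). Over an arbitrary, possibly imperfect, field the classical Bertini irreducibility theorem is unavailable, and we are further constrained to the special linear system of hypersurfaces containing the fixed curve $C$, whose base locus is exactly $\ov{C}$; it is precisely to control irreducibility and regularity of sections by such constrained systems over arbitrary fields that the Bertini theorems of \cite{Ghosh-Krishna-Bertini} are invoked, and checking that their hypotheses are satisfied here (using that $X$ is integral Cohen-Macaulay and $d\ge 3$) is the delicate point.
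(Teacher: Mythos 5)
Your proposal is correct and follows essentially the same route as the paper: conclusion (2) from \lemref{lem:Bertini-0}, irreducibility (and the proper intersection with $A$) from the Bertini theorems of \cite{Ghosh-Krishna-Bertini} applied to the linear system with base locus $\ov{C}$, and reducedness via Serre's criterion $(R_0+S_1)$ using the Cohen--Macaulayness of the hypersurface section. The only difference is cosmetic: you unpack the Cohen--Macaulay/dimension claim and conclusion (3) into elementary non-zero-divisor and dimension-count arguments, whereas the paper absorbs them into the citation of \cite[Theorem~3.4, Lemma~3.7]{Ghosh-Krishna-Bertini}.
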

\begin{proof}
  By \cite[Theorem~3.4, Lemma~3.7]{Ghosh-Krishna-Bertini},
  there is a dense open subscheme $\sU$ of the linear system of hypersurfaces in
  $\P^n_k$ of all degrees $N \gg 0$  containing $C$ such that every $H \in \sU(k)$
  has the property that
  $Y = H \cap X$ is an irreducible Cohen-Macaulay scheme of dimension $d-1$,
  and it intersects $A$ properly away from $C$.
   In particular, $Y$ satisfies (3).
  We combine this with \lemref{lem:Bertini-0} to see that after possibly
  enlarging $N$ and shrinking $\sU$, the section $Y = H \cap X$ also
  satisfies (2).
In particular, $Y$ is an $R_0$-scheme.
    Since $Y$ is Cohen-Macaulay of positive dimension, it is an $S_1$-scheme.
    But an $(R_0 + S_1)$-scheme is reduced. It follows that $Y$ satisfies (1) too.
  \end{proof}

Let $X$ be an integral quasi-projective $k$-scheme of dimension $d \ge 3$ and
let $D \subset X$ be an effective Cartier divisor on $X$. Let
$C_\pm\subset X$ be two distinct closed subschemes of pure dimension one
whose no component is contained in $D$ and $C_+ \cap D = C_- \cap D$. We let
$\Delta(C) = C_+ \cup C_-$ be the subscheme
defined by the sheaf of ideals $\sI_+ \cap \sI_-$, where $\sI_\pm$ are the sheaves of 
ideals defining $C_\pm$ in $X$. 
 We fix a locally closed embedding $X \inj \P^n_k$ and let $V_m$ be the
 linear system of hypersurfaces in $\P^n_k$ of degree $m$ containing $\Delta(C)$.
 Let $x \in C_\pm \cap D$ be a closed point. We let $R$ and $\fm$ respectively
  denote the local ring $\sO_{X,x}$ and its maximal ideal.

\begin{lem}\label{lem:Open}
   Assume that $X$ is a Cohen-Macaulay scheme.
   Assume further that $C_+$ is a Cartier curve on $X$ at $x$ and $C_-$ is regular at
   $x$. Then for all $m \gg 0$, the scheme $V_m$ is nonempty and there is a dense
   open subscheme $\sU_m \subset V_m$ such that $C_+$ is a Cartier curve on $H \cap X$ at
   $x$ for all $H \in \sU_m(k)$.
 \end{lem}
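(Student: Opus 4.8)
\emph{The plan} is to reduce the assertion to a purely local criterion at $x$ and then to globalize it by a standard Serre vanishing argument. The local criterion I would isolate is this: writing $R=\sO_{X,x}$ and $\sI_{\pm,x}\subset R$ for the ideals of $C_\pm$, if $h\in R$ is a local equation at $x$ of a hypersurface $H\supset\Delta(C)$ (so that necessarily $h\in\sI_{+,x}\cap\sI_{-,x}=\sI_{\Delta(C),x}$), then $C_+$ is a Cartier curve on $H\cap X$ at $x$ as soon as $h$ is a \emph{minimal generator} of $\sI_{+,x}$, i.e. $h\notin\fm\sI_{+,x}$. Indeed, $C_+$ Cartier on $X$ at $x$ means $\sI_{+,x}$ is generated by a regular sequence, which must have length $d-1=\operatorname{ht}(\sI_{+,x})$ since $X$ is integral of dimension $d$; as $R$ is Cohen-Macaulay, $\sI_{+,x}$ is then a complete intersection with $\mu(\sI_{+,x})=d-1$, and every minimal generating set of it is a regular sequence (a set of $\operatorname{ht}=d-1$ elements generating it). By Nakayama any $h\notin\fm\sI_{+,x}$ extends to a minimal generating set $h=c_1,c_2,\dots,c_{d-1}$, which is therefore a regular sequence; hence $h$ is a nonzerodivisor, $\sO_{H\cap X,x}=R/(h)$ is Cohen-Macaulay of dimension $d-1$, and $\sI_{+,x}/(h)=(\ov{c}_2,\dots,\ov{c}_{d-1})$ is generated by a regular sequence of length $d-2$. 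This is exactly the statement that $C_+$ is Cartier on $H\cap X$ at $x$.

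\emph{To produce one good local equation} I would use the hypothesis that $C_-$ is regular at $x$, which says precisely that $R/\sI_{-,x}=\sO_{C_-,x}$ is a discrete valuation ring. This is the exact hypothesis of \lemref{lem:Algebra-0}, applied with $A=R$, $I=\sI_{+,x}$ (generated by a regular sequence of length $d-1\ge 2$ as $d\ge 3$) and $J=\sI_{-,x}$. It furnishes a regular sequence $b_1,\dots,b_{d-1}$ generating $\sI_{+,x}$ with $b_i\in\sI_{+,x}\cap\sI_{-,x}$ for $i>1$. In particular $b_2\in\sI_{\Delta(C),x}$ is a minimal generator of $\sI_{+,x}$, so its image in the $k(x)$-vector space $\sI_{+,x}/\fm\sI_{+,x}$ is nonzero.

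\emph{For the globalization} I would invoke that for all $m\gg 0$ the sheaf $\sI_{\ov{\Delta(C)}}(m)$ is globally generated; hence $V_m\neq\emptyset$, and the global sections generate the stalk $\sI_{\Delta(C),x}$ as an $\sO_{X,x}$-module. Consequently the linear map
\[
  \rho_m\colon H^0(\P^n_k,\sI_{\ov{\Delta(C)}}(m))\longrightarrow \sI_{+,x}/\fm\sI_{+,x},
\]
sending a section to the class of its local equation at $x$ (well defined independently of the trivialization of $\sO(m)$, as a unit does not affect membership in $\fm\sI_{+,x}$), has image equal to the image of $\sI_{\Delta(C),x}$ in $\sI_{+,x}/\fm\sI_{+,x}$; by the previous paragraph this contains the nonzero class of $b_2$, so $\rho_m\neq 0$. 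Therefore $\sU_m:=\{H\in V_m:\rho_m(H)\neq 0\}$ is the complement of the proper linear subspace $\P(\Ker\rho_m)$, hence a dense open subscheme of $V_m$, and for every $H\in\sU_m(k)$ the local equation of $H$ at $x$ is a minimal generator of $\sI_{+,x}$. By the local criterion, $C_+$ is Cartier on $H\cap X$ at $x$. (If one also wishes $H\cap X$ to be integral and Cohen-Macaulay, one intersects $\sU_m$ with the Bertini-good locus of \lemref{lem:Bertini}.)

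\emph{The main obstacle} is the local step, and more precisely the recognition that the structure one needs --- a local equation of $H$ lying in $\sI_{+,x}\cap\sI_{-,x}$ which is at the same time a minimal generator of $\sI_{+,x}$ --- is exactly what \lemref{lem:Algebra-0} delivers once $R/\sI_{-,x}$ is a discrete valuation ring, i.e. once $C_-$ is regular at $x$. The Cohen-Macaulay hypothesis on $X$ enters twice and is essential: to force a height-$(d-1)$ ideal generated by $d-1$ elements to be a regular sequence, and to keep $R/(h)$ Cohen-Macaulay so that the residual sequence $\ov{c}_2,\dots,\ov{c}_{d-1}$ remains regular. The globalization is routine Serre theory; the only point requiring care is that the image of $\rho_m$ coincides with the image of the local ideal $\sI_{\Delta(C),x}$, which is what global generation after twisting guarantees.
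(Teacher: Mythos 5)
Your proof is correct and follows essentially the same route as the paper's: the same local criterion (the local equation of $H$ must be a minimal generator of $\sI_{+,x}$, which in a Cohen--Macaulay ring forces $C_+$ to remain a complete intersection on $H\cap X$), the same use of \lemref{lem:Algebra-0} with $J=\sI_{-,x}$ to produce such a generator inside $\sI_{\Delta(C),x}$, and the same identification of the bad locus as a proper linear (hence closed) subspace of $V_m$. The only cosmetic difference is that you prove the local criterion directly where the paper cites \cite[Theorem~17.4]{Matsumura}.
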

\begin{proof}
It is easy to see that $V_m \neq \emptyset$ for all $m \gg 0$.
For any coherent sheaf $\sF$ on $X$,
we let $\mu_x(\sF)$ be the minimal number of generators of the stalk
$\sF_x$ as an $\sO_{X,x}$-module. 
For any integer $t \ge 1$, let $\sU_m(t) \subset V_m$ be the subscheme of
points $H \in V_m$ such that $\mu_x({\sI_{C_+}}/{\sI_{H \cap X}}) \le t$, where
$\sI_{Z}$ is the sheaf of ideals of a closed subscheme $Z \subset X$.
Since $X$ is Cohen-Macaulay (in particular, a catenary scheme),
one knows from \cite[Theorem~17.4]{Matsumura} that every $H \in \sU_m(d-2)(k)$
has the property that  $C_+$ is a Cartier curve on $H \cap X$ at $x$.
Hence, it suffices to show that $\sU_m := \sU_m(d-2)$ is a nonempty open subscheme of $V_m$.

It follows from \lemref{lem:Algebra-0} that $\sU_m \neq \emptyset$.
It remains to show that  $\sU_m \subset V_m$ is open.
This part does not require $X$ to be Cohen-Macaulay.
We let $I$ (resp. $I_H$) be the stalk of the ideal sheaf $\sI_{C_+}$ (resp.
  $\sI_{H \cap X}$) inside $X$ at $x$ for any $H \in V_m(k)$.
  For any such $H$, we let $f_H \in R$ be the defining equation of
  $H \cap X$ at $x$ obtained by suitably localizing the homogeneous polynomial
  in $k[x_0, \ldots , x_n]$ that defines $H$.
  We let $J = {I}/{I_H}$ and $\psi_H(1) = f_H$ so that there is an exact sequence of
  $R$-modules:
  \[
    R  \xrightarrow{\psi_H} I \to J \to 0.
  \]
This yields an exact sequence
  \[
    k(x) \xrightarrow{{\psi_H} \otimes 1} I \otimes_R k(x) \to
    J \otimes_R k(x) \to 0.
  \]

By Nakayama's lemma, $\mu_x(J) \le d-2$ if and only if $\mu_x( J \otimes_R k(x))
  \le d-2$. On the other hand, we are given that $\mu_x(I \otimes_R k(x)) = d-1$.
  Hence, the statement $\mu_x( J \otimes_R k(x)) \ge d-1$ is equivalent to the
  statement that $f_H \equiv 0\ {{ \rm mod}}(\fm I)$. But the latter is clearly a closed
  condition on $V_m$. We have thus shown that $\sU_m(d-2)$ is open in $V_m$.
  This concludes the proof.
 \end{proof}

\begin{lem}\label{lem:Cartier-higher}
Assume that $k$ is infinite. 
Let $X \subset \P^n_k$ be an integral quasi-projective $k$-scheme of dimension
$d \ge 3$ and let $D \subset X$ be a Cartier divisor on $X$. Let
$C_\pm \subset X$ be two distinct integral curves not contained in $D$ 
such that $C_+ \cap D = C_- \cap D$. Assume that
$C_\pm$ are Cartier curves on $X$ along $D$ and are regular along $D$.
Assume further that $X$ is a Cohen-Macaulay scheme. Let $\Delta(C) = C_+ \cup C_-$.
Let $A \subset X$ be a reduced closed subset.
Then there exist  independent general hypersurfaces
$H_1, \ldots , H_{d-2} \subset \P^n_k$ of large enough degrees containing $\Delta(C)$
 such that
$Y = X \cap H_1 \cap \cdots \cap H_{d-2}$
satisfies the following:
\begin{enumerate}
\item
  $Y$ is an integral Cohen-Macaulay scheme of dimension $2$.
\item
There is a finite closed subset $Z_Y \subset \Delta(C)$ (depending on $Y$) such that
$(Y\cap X^o) \setminus Z_Y$ is regular.
\item
  $C_\pm$ are Cartier curves on $Y$ along $D$.
  \item
 $Y$ intersects $A$ properly away from $\Delta(C)$. 
\end{enumerate}
\end{lem}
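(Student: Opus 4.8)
The plan is to obtain $Y$ as an iterated general hyperplane (hypersurface) section, cutting down from dimension $d$ to dimension $2$ by choosing $d-2$ general hypersurfaces of large degree containing $\Delta(C)$, and to verify the four conditions by combining the Bertini-type statements already established (Lemmas~\ref{lem:Bertini-0}, \ref{lem:Bertini}, and \ref{lem:Open}). The guiding principle is that each of the four conditions is an open dense condition on the relevant linear system, so it suffices to intersect finitely many dense open subsets. First I would apply \lemref{lem:Bertini} to $X$ with the curve $\Delta(C)$ (which is reduced, being a union of two distinct integral curves) and the closed set $A$: this produces, for all large degrees $N$, a dense open $\sU$ of hypersurfaces containing $\Delta(C)$ such that a single section $H \cap X$ is integral Cohen-Macaulay of dimension $d-1$, is regular away from a finite subset of $\Delta(C)$, and meets $A$ properly away from $\Delta(C)$. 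Thus one general hypersurface drops the dimension by one while preserving conditions (1), (2), and (4).

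The subtlety is condition (3), that $C_\pm$ remain Cartier curves on the section along $D$. For this I would invoke \lemref{lem:Open} at each point $x \in C_\pm \cap D$. Note the hypothesis there is asymmetric ($C_+$ Cartier, $C_-$ regular at $x$), but in our situation \emph{both} $C_\pm$ are Cartier curves along $D$ and regular along $D$, so the lemma applies with the roles of $C_+$ and $C_-$ interchanged as well; running it for each of the two curves at each of the finitely many points of $(C_+ \cup C_-) \cap D$ gives a dense open subscheme of the linear system $V_m$ over which \emph{both} $C_+$ and $C_-$ are Cartier on $H \cap X$ at every point of $D$. Since $C_\pm \cap D$ is finite and away from $D$ the curves are already Cartier divisors (being regular, hence locally principal, inside the regular locus), this secures condition (3) for a single section.

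Having arranged that one general hypersurface of large degree containing $\Delta(C)$ preserves all four properties, I would then iterate. The cleanest way to organize the induction is to observe that after taking $Y_1 = H_1 \cap X$, the scheme $Y_1$ is again an integral quasi-projective Cohen-Macaulay scheme (now of dimension $d-1 \ge 2$), that $D \cap Y_1$ is a Cartier divisor on $Y_1$, and that $C_\pm \subset Y_1$ are still integral curves, Cartier along $D$ and regular along $D$, with $C_+ \cap D = C_- \cap D$; this is precisely the hypothesis configuration of the lemma with $d$ replaced by $d-1$. One repeats the construction on $Y_1$ relative to the reduced closed set $A \cap Y_1$, and continues until the dimension reaches $2$. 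The only point requiring care in the induction step is that \lemref{lem:Bertini} and \lemref{lem:Open} both require the ambient dimension to be $\ge 3$; thus the iteration runs while the current section has dimension $\ge 3$, i.e.\ through the choice of $H_1, \ldots, H_{d-2}$, and terminates exactly when $Y$ has dimension $2$, at which point no further cut is needed and the Cartier, reducedness, and properness conclusions are read off from the last application. Taking $H_1, \ldots, H_{d-2}$ to be independent general members of their respective linear systems (so that each cut lies in the dense open locus prescribed at that stage), the resulting $Y = X \cap H_1 \cap \cdots \cap H_{d-2}$ satisfies (1)--(4).

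I expect the main obstacle to be the bookkeeping in the induction rather than any single hard estimate: one must check that the three auxiliary properties fed into the next stage --- integrality and Cohen-Macaulayness of $Y_i$, the fact that $D \cap Y_i$ stays a Cartier divisor, and that $C_\pm$ remain regular and Cartier along $D$ inside $Y_i$ --- are genuinely preserved by a general section, so that \lemref{lem:Bertini} and \lemref{lem:Open} can be reapplied verbatim. The regularity of $C_\pm$ along $D$ inside the section follows because $C_\pm \subset Y_i$ and regularity of a curve at a point is intrinsic; that $D \cap Y_i$ is Cartier on $Y_i$ follows since $D$ is Cartier on $X$ and $Y_i$ is cut by a regular sequence in the Cohen-Macaulay scheme $X$; and the Cartier property of $C_\pm$ along $D$ in $Y_i$ is exactly what \lemref{lem:Open} delivers at each stage. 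Once these stability statements are in hand, the finiteness of $Z_Y$ in (2) follows by taking the union of the finitely many bad loci contributed at each cut, and (4) follows by the proper-intersection clause of \lemref{lem:Bertini} applied inductively to $A$.
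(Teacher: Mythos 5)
Your proposal is correct and follows essentially the same route as the paper, whose proof is simply the one-line observation that the statement is a direct consequence of Lemmas~\ref{lem:Bertini} and~\ref{lem:Open}; you have merely spelled out the iteration down to dimension $2$ and the symmetric application of \lemref{lem:Open} to both $C_+$ and $C_-$, which is exactly what the cited combination entails.
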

\begin{proof}
This is a direct consequence of Lemmas~\ref{lem:Bertini} and ~\ref{lem:Open}.
\end{proof}

We can now prove a version of \lemref{lem:Cartier} in higher dimensions.

\begin{prop}\label{prop:Cartier-higher-*}
  Assume that $k$ is infinite.
Let $X$ be an integral quasi-projective $k$-scheme of dimension $d \ge 2$. Let
$C_\pm \subset X$ be two distinct integral curves not contained in $D$ 
such that $C_+ \cap D = C_- \cap D$. Assume that
$C_\pm$ are Cartier curves on $X$ along $D$ and are regular along $D$.
Assume further that $X$ is a Cohen-Macaulay scheme.
Then $C_+ \amalg_D C_- \subset S_X$ is a Cartier curve along $D$.
\end{prop}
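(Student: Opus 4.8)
The plan is to check the Cartier-curve-along-$D$ condition pointwise, dispose of the surface case directly with \lemref{lem:Cartier}, and reduce the case $d \ge 3$ to surfaces by cutting with the general hypersurfaces produced in \lemref{lem:Cartier-higher}. Being a Cartier curve along $D$ is a condition only at the points of $(C_+ \amalg_D C_-) \cap D = C_+ \cap D = C_- \cap D$, all of which lie over $D$ under $\Delta$; away from $D$ the join is just the disjoint union of $\iota_+(C_+)$ and $\iota_-(C_-)$, where nothing is imposed. So I would fix a closed point $x \in C_+ \cap D = C_- \cap D$ and work in the local ring of $S_X$ at the image of $x$.

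For $d = 2$, each $C_\pm$ has codimension one in $X$, so the hypothesis that $C_\pm$ is a Cartier curve along $D$ says precisely that $C_\pm$ is cut out at $x$ by a single non-zero-divisor $a_\pm \in \sO_{X,x}$, i.e.\ is a local Cartier divisor. Since $C_\pm$ is integral, regular along $D$ and not contained in $D$, the local ring $\sO_{C_\pm, x}$ is a discrete valuation ring in which the local equation $f$ of $D$ has nonzero, hence non-zero-divisor, image; thus $C_\pm \cap D$ is a Cartier divisor on $C_\pm$. All hypotheses of \lemref{lem:Cartier} now hold in $\sO_{X,x}$, and that lemma shows $C_+ \amalg_D C_-$ is cut out in $S_X$ at the image of $x$ by a single non-zero-divisor. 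As $\dim S_X = 2$, this is exactly the Cartier-curve-along-$D$ condition.

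For $d \ge 3$, I would apply \lemref{lem:Cartier-higher} to get general hypersurfaces $H_1, \ldots, H_{d-2}$ containing $\Delta(C) = C_+ \cup C_-$ for which $Y = X \cap H_1 \cap \cdots \cap H_{d-2}$ is an integral Cohen-Macaulay surface on which $C_\pm$ are Cartier curves along $D$; they stay regular along $D$ since this is intrinsic to the abstract curves $C_\pm$. As $Y$ is integral with $Y \not\subset D$ (because $\Delta(C) \subset Y$ is not contained in $D$), the restriction $D \cap Y$ is a Cartier divisor on $Y$, so \propref{prop:double*}(4) identifies $S_Y \cong Y \times_X S_X$ with a closed subscheme of $S_X$. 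Since $X$ is Cohen-Macaulay and $Y$ has codimension $d-2$, the equations of $H_1, \ldots, H_{d-2}$ restrict to a regular sequence cutting out $Y$ in $X$; pulling these back along $\Delta$, which is finite flat by \propref{prop:double*}(1) and surjective because $\Delta \circ \iota_\pm = \id$, yields a regular sequence of length $d-2$ cutting out $S_Y$ in $S_X$.

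Finally I would splice the two reductions together. The surface case, applied to $Y$, shows that $C_+ \amalg_D C_-$ is cut out in the local ring of $S_Y$ at the image of $x$ by a single non-zero-divisor $g$. Lifting $g$ to $S_X$ and appending it to the length-$(d-2)$ regular sequence defining $S_Y$ gives a regular sequence of length $d-1$ defining $C_+ \amalg_D C_-$ in the local ring of $S_X$ at the image of $x$, which is precisely what being a Cartier curve along $D$ demands. I expect the main obstacle to be exactly this last transfer: one must know that a regular sequence defining $Y$ in $X$ pulls back to a regular sequence defining $S_Y$ in $S_X$, so that the codimension-one datum on the surface double propagates faithfully to $S_X$. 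This is guaranteed by the flat base-change identity $S_Y \cong Y \times_X S_X$ of \propref{prop:double*} together with the flatness of $\Delta$, and it is what lets the short (length-one) regular sequence on $S_Y$ grow to the required length $d-1$ on $S_X$.
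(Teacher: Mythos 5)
Your proposal is correct and follows essentially the same route as the paper: dispose of $d=2$ via Lemma~\ref{lem:Cartier}, cut down to the surface $Y$ of Lemma~\ref{lem:Cartier-higher} when $d\ge 3$, and conclude from the chain $C_+\amalg_D C_-\subset S_Y\subset S_X$ with $S_Y$ a complete intersection in $S_X$ by Proposition~\ref{prop:double*}. The paper leaves the splicing of the length-one regular sequence on $S_Y$ with the length-$(d-2)$ sequence defining $S_Y$ in $S_X$ implicit; you have simply spelled out that (standard) step.
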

\begin{proof}
  By \lemref{lem:Cartier}, we can assume $d \ge 3$. In this case, we let
  $\Delta(C) \subset Y \subset X$ be as in \lemref{lem:Cartier-higher}.
  Then we have inclusions $C_+ \amalg_D C_- \subset S_Y \subset S_X$.
  The first inclusion is Cartier along $D$ by \lemref{lem:Cartier} and
  $S_Y$ is a complete intersection in $S_X$ by our choice of $Y$ and
  \propref{prop:double*}. The desired assertion now follows.
\end{proof}

\section{Cartier curves on the double}\label{sec:CC}
Let $k$ be a field. We fix an integral quasi-projective $k$-scheme $X$ of
dimension $d \ge 2$.  Let $D \subset X$
be a closed subscheme such that $D^o = D \cap X^o$ is
a Cartier divisor on $X^o$.
We let $D' = D_\red \cup (X_\sing)_+ \cup (X_\sing)_-  = (S_X)_\sing$.
Let $A, B \subset X$ be reduced closed subschemes of dimensions $\le d-2$  such that
$X_\sing \subset B \subset X_\sing \cup D = D^\star$. We let $\Sigma = S_A \cup S_B =
S_{A\cup B}$.

The most challenging part of the proof of \thmref{thm:Main-1} is the construction
of a suitable pull-back map $\tau^*_X \colon \CH_0(S_X) \to \CH_0(X|D)$.
The strategy of doing this is to first construct a pull-back map
$\tau^*_X \colon \CH^{LW}_0(S_X;A) \to \CH^{LW}_0(X|D; A \cup B)$,
assuming  $k$ is infinite.
We then descend $\tau^*_X$ to the level of $\CH_0(S_X)$ by a projective bundle trick.
The reduction to the case of finite field is done using a pro-$\ell$-extension
trick.

In order to construct $\tau^*_X$ on $\CH^{LW}_0(S_X;A)$, we need to show that
the latter can be described by
relations among the 0-cycles on $(S_X)_\reg$ which are given by the
divisors of rational functions on a very specific kind of Cartier curves on $S_X$.
The goal of this section is to achieve this reduction when $k$ is infinite.
If $S$ is a quasi-projective $k$-scheme and $\sL$ is a line bundle on $S$
with a section $t \in H^0(S, \sL)$, we write $(t)$ for the divisor of zeros of
$t$, that we consider as a closed subscheme of $S$.
We shall assume in this section that $k$ is infinite.
We fix a locally closed embedding $S_X \inj \P^n_k$.
For a subscheme $Z \subset S_X$, we let $Z_\pm = \iota^*_\pm(Z)$.
Recall that $Z^\dagger = (Z_\red)_\reg$ for any $Z \in \Sch_k$.

\begin{lem}\label{lem:Cartier-good}
Assume that $d \ge 3$. Then $\sR^{LW}_0(S_X;S_A)$ is generated by the 
divisors of functions on Cartier curves $C\inj S_X$, where $C$ satisfies the following:
\begin{enumerate}
\item
  There is a reduced surface $Y \subset S_X$ such that $Y_\pm = X_\pm \cap Y$
  are integral.
\item 
There are distinct hypersurfaces \[H_1, \cdots , H_{d-2} \inj \P^n_k\] 
such that 
$Y \cap S_{X^o} = S_{X^o} \cap H_1 \cap \cdots \cap H_{d-2}$ is a complete
intersection.
\item
$Y^o_\pm = X^o_{\pm} \cap Y = X^o_{\pm} \cap H_1 \cap \cdots \cap H_{d-2}$.
\item
No component of $Y$ is contained in $D'$.
\item
$C \subset Y$.
\item
$C$ is a Cartier divisor on $Y$.
\item
  $Y^o_{\pm}$ and $Y^o_\pm \cap D^{\dagger}$ are regular away from $C_\pm$.
\item
  $C$ is Cohen-Macaulay.
\item
  $C \cap \Sigma = \emptyset$.
\item
  $Y \cap \Sigma$ is finite.
\end{enumerate}
\end{lem}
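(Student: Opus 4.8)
My plan is to prove the nontrivial inclusion, namely that every generator of $\sR^{LW}_0(S_X;S_A)$ lies in the subgroup generated by the divisors attached to Cartier curves of the special shape (1)--(10); the reverse inclusion is automatic, since any such curve is in particular a Cartier curve on $S_X$ missing $S_A$. So I fix a generator $\divf(f)$, where $C\inj S_X$ is a Cartier curve missing $S_A$ and $f\in k(C,C\cap D')^{\times}$. Because $X_+\cap X_-=D$ inside $S_X$, every irreducible component of $C$ lies in a single sheet, so with $C_\pm=\iota^*_\pm(C)$ I get $C_+\cap D=C_-\cap D$ and $C=C_+\amalg_D C_-$, with $f$ splitting as a pair $(f_+,f_-)$ agreeing along $C\cap D$ and $\divf(f)=\divf(f_+)_{C_+}+\divf(f_-)_{C_-}$. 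By linearity I may assume the $C_\pm$ are integral. The target of the first (and hardest) step is to rewrite $\divf(f)$, modulo the subgroup already known to be generated by special divisors, so that $C_\pm$ are Cartier curves on $X$ that are \emph{regular} at every point of $C_\pm\cap D$ and cross $D$ away from $A\cup B$.

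This reduction is where I expect the main obstacle to lie. A general Cartier curve is only locally defined by a regular sequence along $D$ and need not be regular there, yet regularity of the two curves along $D$ is exactly what is required to feed the local-algebra input \lemref{lem:Algebra-0} (whose hypothesis is that the local ring of the \emph{other} curve is a discrete valuation ring) into the hypersurface construction below. I would carry this out by a local straightening of $C_\pm$ near $C_\pm\cap D$: since $D^o$ is a Cartier divisor on the regular scheme $X^o$ and $B^o\subset D^o$ has codimension $\ge 1$ in $D^o$, one can move the finitely many branches of $C_\pm$ crossing $D$ so that they pass through general, hence regular, points of $D^o\setminus B^o$, realizing the difference as divisors of functions on auxiliary special curves. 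This is precisely the sort of delicate moving alluded to in \S\ref{sec:double} and \remref{remk:Imperfect}, and verifying that it can be performed without leaving the regular locus or meeting $\Sigma$ is the technical heart of the argument.

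Granting such $C=C_+\amalg_D C_-$, I would then build the surface $Y$ by a two-sheet Bertini argument. Fixing $S_X\inj\P^n_k$, consider for $m\gg 0$ the linear system $V_m=|H^0(\P^n_k,\sI_C(m))|$ of hypersurfaces containing $C$. Restriction to the sheet $X_+$ imposes exactly the condition of containing $C_+$ (because $C_-\cap X_+=C_-\cap D=C_+\cap D\subset C_+$), and symmetrically for $X_-$; for $m\gg 0$ the restriction maps $H^0(\P^n_k,\sI_C(m))\to H^0(X_\pm,\sI_{C_\pm}(m))$ are surjective by Serre vanishing, so a general member of $V_m$ restricts to a general hypersurface containing $C_\pm$ on each sheet. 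I can therefore apply \lemref{lem:Bertini-0} and \lemref{lem:Bertini} to $X_+$ and to $X_-$ simultaneously (with curve $C_\pm$ and closed set $A\cup B$) and intersect the resulting dense opens inside $V_m$. This yields $H_1,\ldots,H_{d-2}$ such that $Y=S_X\cap H_1\cap\cdots\cap H_{d-2}$ has sheets $Y_\pm=X_\pm\cap H_1\cap\cdots\cap H_{d-2}$ that are integral surfaces, complete intersections on $X^o_\pm$, with $Y^o_\pm$ and $Y^o_\pm\cap D^\dagger$ regular away from $C_\pm$, no component contained in $D'$, and $Y$ meeting $\Sigma$ in dimension $\le 0$ (using $\dim\Sigma\le d-2$ and \lemref{lem:Bertini}(3)); these are conditions (1)--(4), (7) and (10). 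The Bertini theorems are applied on the Cohen--Macaulay locus of each sheet, the complement being harmless as it is swallowed by $\Sigma$.

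It remains to upgrade $C$ to a Cartier divisor on $Y$ and to clear the disjointness conditions. By construction $C\subset Y$, giving (5). Applying \lemref{lem:Open} on each sheet --- legitimate because $C_+$ is Cartier and $C_-$ regular along $D$ after the first step --- shows that $C_\pm$ remain Cartier divisors on $Y_\pm$ at the points of $C_\pm\cap D$, while away from $D$ they are Cartier on $Y_\pm$ since $Y^o_\pm$ is regular there; \lemref{lem:Cartier}, applied to the double $Y=Y_+\amalg_D Y_-$, then shows that $C=C_+\amalg_D C_-$ is a Cartier divisor on $Y$, which is (6), together with the Cohen--Macaulayness (8). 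Condition (9) holds because $C$ already misses $S_A$ and, by the first step, crosses $D$ only at regular points of $D^o\setminus B^o$, so that $C\cap S_B=\emptyset$ as well. Since $f$ is never altered, $\divf(f)$ is now literally the divisor of a function on a special Cartier curve, which completes the reduction. I note finally that the case $d=2$ needs no hypersurface cutting and is recorded directly in \lemref{lem:Cartier}, which is why the hypothesis $d\ge 3$ enters only through the Bertini step.
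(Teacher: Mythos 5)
There are genuine gaps, and the proposal also mislocates the difficulty. First, the reduction ``by linearity I may assume the $C_\pm$ are integral'' is not available: the generators of $\sR^{LW}_0(S_X;S_A)$ are divisors of functions on whole Cartier curves, and an individual irreducible component of a Cartier curve need not itself be a Cartier curve (the regular-sequence condition along $D'$ is a condition on the full curve), so splitting $\divf(f)$ into componentwise contributions does not express it in terms of generators. Likewise the decomposition $C=C_+\amalg_D C_-$ with $C_+\cap D=C_-\cap D$ is not automatic for an arbitrary Cartier curve on $S_X$; in the paper this join structure only appears later, as a consequence of realizing the curve as the zero locus of a section of a line bundle on $S_X$ restricted to $Y=Y_+\amalg_D Y_-$. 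Second, the step you yourself flag as ``the technical heart'' --- straightening $C_\pm$ so that they are regular along $D$ --- is not part of this lemma at all: conditions (1)--(10) do not require $C_\pm$ to be regular along $D$ (that refinement is the content of \lemref{lem:reduction-basic}, where it is achieved by replacing $C$ with general members of a very ample linear system via the relations $\divf(t_0/t_\infty)$, not by locally moving branches of the given curve). As proposed, the straightening would change $C\cap D$ and no mechanism is given for realizing the difference inside $\sR^{LW}_0(S_X;S_A)$.

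The decisive missing ingredient is the one that actually produces condition (6). After cutting by the hypersurfaces, Levine's argument only gives that $C_1$ is locally principal in $Y$ \emph{near $C_1\cap D'$ and at the generic points of $C_1$}; at a point of $C_\pm$ away from $D$ the surface $Y_\pm$ may well be singular --- your own condition (7) only asserts regularity of $Y^o_\pm$ \emph{away from} $C_\pm$ --- so your claim that $C_\pm$ is Cartier there ``since $Y^o_\pm$ is regular there'' is circular and false in general. The paper repairs this by \emph{enlarging} $C_1$ to a locally principal closed subscheme $C\supset C_1$ of $Y$ with $C$ equal to $C_1$ at its generic points, $\ov{C\setminus C_1}\cap C_1\cap D'=\emptyset$, $C\cap\Sigma=\emptyset$ and $\sO_Y(C)$ very ample (Levine's Lemma~1.4, or part (i) of the Biswas--Srinivas sublemma), then using integrality of $Y_\pm$ and the join exact sequence to see that $C$ is a genuine Cartier divisor on $Y$, and finally extending $f$ to $g\in k(C,C\cap D')^\times$ by $g=1$ on $C\setminus C_1$ so that $\divf(g)=\divf(f)$. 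This enlargement-and-extension is the whole point of the lemma and is absent from your argument; without it conditions (6) and (8) are not obtained. (Your appeal to \lemref{lem:Open} is also misplaced: that lemma concerns whether a curve stays Cartier on a hypersurface section at a point where a \emph{second} curve is regular, and is an ingredient for \lemref{lem:Cartier-higher}, not for this step.) The two-sheet Bertini part of your argument is essentially in line with the paper's use of \lemref{lem:Bertini} and Levine's Lemma~1.3, but the rest needs to be restructured around the enlargement of the curve rather than around regularizing it along $D$.
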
    
\begin{proof}
Let $C_1 \inj S_X$ be an arbitrary Cartier curve and let 
$f \in k(C_1, C_1 \cap D')^{\times}$.
Since $C_1$ is Cartier along $D'$, it follows that
it is Cartier in $S_X$ along each of its generic points. 
 We shall now replace $C_1$ by the curves of desired type using
a combination of \cite[Lemma~1.3]{Levine-2} and 
\cite[Lemma~1.4]{Levine-2} as follows.

We are given that $S_X$ is reduced quasi-projective, $X$ is integral and 
$C_1 \inj S_X$ is Cartier along $D'$ which does not meet $S_A$.
Since $S_B \subset (S_X)_\sing$, we can first modify $C_1$ using
\cite[Sublemma~1]{Biswas-Srinivas} (see its proof)
so that we also have $C_1 \cap S_B = \emptyset$.
Since $S_{X^o}$ is Cohen-Macaulay by \propref{prop:double*}, 
we can apply \cite[Lemma~1.3]{Levine-2} and \lemref{lem:Bertini} to find
distinct hypersurfaces $H_1, \cdots , H_{d-2} \subset
\P^n_k$  such that letting $Y = (S_X \cap H_1 \cap \cdots \cap H_{d-2})_\red$, we
have the following:
\begin{enumerate}
\item
  $Y \cap S_{X^o} = S_{X^o} \cap H_1 \cap \cdots \cap H_{d-2}$ is a complete intersection.
\item
  $Y\cap S_{X^o}$ is Cohen-Macaulay.
\item
$X_{\pm} \cap Y = X_{\pm} \cap H_1 \cap \cdots \cap H_{d-2}:= Y_{\pm}$ are
integral.
\item
No component of $Y$ is contained in $D'$.
\item
$C_1 \subset Y$.
\item
$C_1$ is locally principal in $Y$ in a neighborhood of the finite set
$C_1 \cap D'$ and at each generic point of $C_1$.
\item
 $(Y \setminus C_1) \cap X^o_\pm$ and $(Y \setminus C_1) \cap D^\dagger$ are regular.
\end{enumerate}
Since the hypersurfaces are general away from $C_1$
and $\dim(A) \le d-2$,
$S_X \cap H_1 \cap \cdots \cap H_{d-2}$ will have the property that
its intersection with $S_A$ and $S_B$ are finite.

Since $Y \inj S_X$ is constructed as the reduced locus of
a complete intersection of hypersurfaces
of arbitrarily large degrees,
we can, as Levine shows (more precisely, see either the second paragraph of the
proof of \cite[Lemma~1.4]{Levine-2} or part (i) of the proof of
\cite[Sublemma~1]{Biswas-Srinivas}),  
furthermore find a locally principal closed subscheme
$C$ of $Y$ such that the following hold:
\begin{listabc}
\item
$C_1 \subset C$.
\item
$C$ equals $C_1$ at every generic point of $C_1$.
\item
 $\ov{(C \setminus C_1)} \cap C_1 \cap D' = \emptyset$.
\item
  $C \cap S_A = C \cap S_B = \emptyset$.
\item
  $\sO_Y(C)$ is very ample on $Y$.
\end{listabc}

Since $Y$ is reduced, it follows from \cite[Lemma~2.2]{Binda-Krishna} that
it is the join of $Y_\pm$ along $D$ and there is an exact sequence
\begin{equation}\label{eqn:Cartier-good-0}
  0 \to \sO_Y \to \sO_{Y_+} \times \sO_{Y_-} \to \sO_{Y \cap D} \to 0.
\end{equation}

Since $C$ is locally principal in $Y$, it follows that 
$C \cap X_{\pm} = C \cap Y_{\pm}$ are locally principal in $Y_{\pm}$. 
Since $Y_{\pm}$ are integral surfaces, it follows that $C \cap X_{\pm}$ are 
Cartier divisors in $Y_{\pm}$. It follows from ~\eqref{eqn:Cartier-good-0} that 
$C$ is a Cartier divisor in $Y$. Since $Y \cap S_{X^o}$ is Cohen-Macaulay and
$C$ is a Cartier divisor on $Y \cap S_{X^o}$ by (d), it follows that $C$ is
Cohen-Macaulay. It follows from  (c) that $f$ extends to a function 
$g \in k(C, C \cap D')^{\times}$ by setting
$g = f$ on $C_1$ and $g = 1$ on $C \setminus C_1$. In particular, we have
$\divf(f) = \divf(g)$. This concludes the proof.
\end{proof}

We now further refine the rational equivalence by specifying the shape of the Cartier
curves that generate the group of relations.

\begin{lem}\label{lem:reduction-basic}
  Let $X$ be as above with $d \ge 2$. Let $\nu\colon C \inj S_X$ be a connected
  Cartier curve which misses $S_A$. 
Assume that either $d = 2$ or there are inclusions 
$C \subset Y \subset S_X$, where
$Y$ is a reduced, $Y \cap S_{X^o}$ is a complete intersection surface on $S_{X^o}$ and
$C$ is a Cartier divisor on  $Y$, as in \lemref{lem:Cartier-good}.
Let $f \in k(C, C \cap D')^\times$.

We can then find two Cartier curves $\nu'\colon C' \inj S_X$ and 
$\nu''\colon C'' \inj S_X$ satisfying the following:
\begin{enumerate}
\item
There are very ample line bundles $\sL', \sL''$ on $S_X$ and sections
$t' \in H^0(S_X, \sL'), \ t'' \in H^0(S_X, \sL'')$ such that
$C' = Y \cap (t')$ and $C'' = Y \cap (t'')$ (with the convention $Y=S_X$ if $d=2$).
\item
  $C'$ and $C''$ are reduced.
\item
  $C' \cap \Sigma = C'' \cap \Sigma = \emptyset$.
\item
  $C'_\pm$ and $C''_\pm$ are integral curves in $X$,
which are Cartier everywhere on $X$ and are regular along $D$.
\item
  $C'_\pm \cap D^\dagger$ and $C''_\pm \cap D^\dagger$ are regular.
\item
There are functions $f' \in k(C', C' \cap D')^{\times}$ and
$f'' \in k(C'', C'' \cap D')^{\times}$ such that
$\nu'_*({\divf}(f'))  + \nu''_*({\divf}(f''))  = 
\nu_*({\divf}(f))$ in $\sZ_0((S_X)_\reg \setminus S_A)$.  
\end{enumerate}
\end{lem}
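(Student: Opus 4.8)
The plan is to exploit the very ample line bundle $\sO_Y(C)$ supplied by \lemref{lem:Cartier-good} together with the symmetry of the local intersection multiplicity of two Cartier divisors on a Cohen--Macaulay surface. First I would fix the geometry of \lemref{lem:Cartier-good}: when $d \ge 3$ we have $C \subset Y \subset S_X$ with $Y$ reduced, $Y \cap S_{X^o}$ a complete intersection (hence Cohen--Macaulay) surface, $Y_\pm$ integral, $C$ a Cartier divisor on $Y$ with $\sO_Y(C)$ very ample, and with the bad loci of $Y^o_\pm$ and of $Y^o_\pm \cap D^\dagger$ finite and disjoint from $C$; when $d = 2$ the same holds with $Y = S_X$. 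Write $C = (s_0)$ for a section $s_0 \in H^0(Y, \sO_Y(C))$. The whole point of producing \emph{two} curves will be that an auxiliary use of $s_0$ is threaded through both of them so that its contributions cancel.

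The core step is to express $f$ as a ratio of two global sections of $\sO_Y(C)$. Assuming this, say $f = (t'/t'')|_C$ with $t', t'' \in H^0(Y, \sO_Y(C))$, I set $C' = Y \cap (t')$ and $C'' = Y \cap (t'')$ and define the rational functions $f' = (s_0/t'')|_{C'}$ and $f'' = (t'/s_0)|_{C''}$; each is a genuine ratio of two sections of $\sO_Y(C)$ restricted to an integral curve on which neither numerator nor denominator vanishes identically. Writing $C_1 \cdot C_2$ for the intersection $0$-cycle of two Cartier divisors on $Y$, I would then compute
\[
  \divf(f')_{C'} + \divf(f'')_{C''} = \big((C \cdot C') - (C'' \cdot C')\big) + \big((C' \cdot C'') - (C \cdot C'')\big).
\]
Because $C'$ and $C''$ meet properly inside the Cohen--Macaulay locus, the local intersection multiplicities are symmetric, so $(C'' \cdot C') = (C' \cdot C'')$ as $0$-cycles; the cross terms cancel and the right-hand side collapses to $(C \cdot C') - (C \cdot C'') = \divf(t'|_C) - \divf(t''|_C) = \divf(f)$. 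This is exactly condition (6), and the existence of $f'$ and $f''$ is automatic once $t', t''$ are in hand, which is the advantage of this reciprocity construction over trying to prescribe a function with a given divisor.

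Next I would install the Bertini genericity. Among the pairs $(t', t'')$ solving $f = (t'/t'')|_C$ I would select a general one using \lemref{lem:Bertini} and the Bertini theorems of \cite{Ghosh-Krishna-Bertini}, invoking the finiteness statements in \lemref{lem:Cartier-good} to guarantee that $C'_\pm, C''_\pm$ are integral curves in $X$ that are everywhere lci and regular along $D$, with $C'_\pm \cap D^\dagger$ and $C''_\pm \cap D^\dagger$ regular, and that $C', C''$ are reduced and disjoint from $\Sigma$. Generality also lets me arrange that the finite sets $C \cap C'$, $C \cap C''$ and $C' \cap C''$ all lie in $(S_X)_\reg \setminus (\Sigma \cup D')$, which places every cycle above in $\sZ_0((S_X)_\reg \setminus S_A)$ and, since $s_0, t', t''$ are then units at the relevant points of the double locus, yields $f' \in k(C', C' \cap D')^{\times}$ and $f'' \in k(C'', C'' \cap D')^{\times}$. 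This secures conditions (1)--(5).

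The main obstacle is the tension in this last step. The equation $f = (t'/t'')|_C$ confines $(t', t'')$ to a proper linear subvariety of $H^0(Y, \sO_Y(C))^{\times 2}$, and solving it at all must contend with the cokernel of the restriction map $H^0(Y, \sO_Y(C)) \to H^0(C, \sO_Y(C)|_C)$, which is controlled by $H^1(Y, \sO_Y)$ via the sequence $0 \to \sO_Y \to \sO_Y(C) \to \sO_Y(C)|_C \to 0$; meanwhile the Bertini conditions require $t', t''$ to behave like general members of a linear system. Reconciling these is where the positivity of $\sO_Y(C)$ is used decisively: choosing $C$ of large degree in \lemref{lem:Cartier-good} (after passing to a projective closure, as there) makes $H^0(Y, \sO_Y(C))$ large, so that after imposing vanishing along the finitely many poles of $f$ and killing the $H^1(Y, \sO_Y)$-obstruction by a dimension count, the solution family still dominates a dense open subset of the linear system, and a general solution meets all of (1)--(5) at once.
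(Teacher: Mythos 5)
Your reciprocity identity is correct and is in fact the same algebraic heart as the paper's argument: the paper also produces two auxiliary divisors $N(s_\infty)$ and $(s'_0)$ with $f=(s'_0/s_\infty^N)|_C$ and uses the symmetry of proper intersection of Cartier divisors on the Cohen--Macaulay surface $Y$ to trade $\divf(f)_C$ for divisors of the function $t_0/t_\infty$ restricted to the two new curves. The genuine gap is in how you propose to produce the pair $(t',t'')$. First, solving $f=(t'/t'')|_C$ inside the \emph{fixed} linear system $H^0(Y,\sO_Y(C))$ is not just a finite-dimensional obstruction problem: $Y$ (and $S_X$) is only quasi-projective, so $H^1(Y,\sO_Y)$ need not be finite-dimensional and your dimension count collapses; passing to the projective closure does not help directly because $f$ is an arbitrary rational function on the non-proper curve $C$ with uncontrolled behaviour on $\ov{C}\setminus C$, and $\ov{C}$ need not be Cartier on $\ov{Y}$. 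The paper's construction is designed precisely to evade this: it chooses the denominator $s_\infty$ \emph{first}, as a section of an auxiliary bundle $\sI\otimes\sO_{\P^n}(m)$ ($\sI$ the ideal of the boundary) containing all poles of $f$, so that $S_X\setminus(s_\infty)$ is affine and $f\cdot s_\infty^N$ extends to a global section with no cohomological obstruction. Second, even granting a solution, your $t'$ is pinned down by $t''$ up to the tiny space $H^0(Y,\sO_Y(C)\otimes\sI_C)\cong H^0(Y,\sO_Y)$, so $C'=(t')$ is essentially rigid once $t''$ is chosen and you have no room to run the Bertini theorems of Ghosh--Krishna to get conditions (2)--(5) for $C'$ (integrality of $C'_\pm$, regularity along $D$, regularity of $C'_\pm\cap D^\dagger$, avoidance of $\Sigma$). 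In the paper the numerator lives in the high power $(\sL')^N$ and is perturbed by the large space $H^0(S_X,(\sL')^N\otimes\sI_T)$, which leaves the function on $C\cup C_\infty$ unchanged while providing full Bertini freedom away from $T$; no analogue of this freedom exists in your setup. Finally, note that condition (1) demands $t',t''$ be sections of very ample bundles \emph{on $S_X$}, not merely on $Y$, which your construction does not provide.
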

\begin{proof}
In this proof, we shall assume that $Y = S_X$ if $d =2$.
In this case, a Cartier curve on $S_X$ along $D' = (S_X)_{\sing}$ must be  
an effective Cartier divisor on $S_X$. 
Hence, we can assume that 
$C$ is an effective Cartier divisor on $Y$ for any $d \ge 2$. 
By \lemref{lem:Cartier-good} for $d\geq 3$ and by using
 \cite[Sublemma~1]{Biswas-Srinivas}  for $d=2$, we 
 can further assume that $C \cap  \Sigma = \emptyset$.
Notice that $Y = Y_+ \amalg_D Y_-$ (see \cite[Lemma~2.2]{Binda-Krishna}).

Since $S_X$ (and hence $Y$) is quasi-projective over $k$ such that
$Y \cap \Sigma$ is finite and $C \cap  \Sigma = \emptyset$, we repeat the argument
for properties (a) $\sim$ (e) in the proof of \lemref{lem:Cartier-good}
to find (after adding a suitable ample divisor to $C$ on $Y$)
an effective Cartier divisor $C'$ on $Y$ such that the following hold:
\begin{enumerate}
\item
$C \subset C'$.
\item
$\ov{C'\setminus C} \cap (C \cap D') = \emptyset$.
\item
  $C' \cap \Sigma = \emptyset$.
  \item
$\sO_{Y}(C')$ is very ample on $Y$.
\end{enumerate}

Setting $g = f$ on $C$ and $g =1$ on $C' \setminus C$, we then see that
$g$ extends to a function in $k(C',C' \cap D')^{\times}$ such that
$\divf(g) = \divf(f)$. We can thus assume that $\sO_{Y}(C)$ is very ample on $Y$.
We choose $t_0 \in H^0(Y, \sO_{Y}(C))$ such that $C = (t_0)$.

Since $\sO_{Y}(C)$ is very ample, and $Y$ is reduced,
we can use \cite[Theorem~3.4]{Ghosh-Krishna-Bertini} to find a new 
section $t_{\infty} \in H^0(Y, \sO_{Y}(C))$, sufficiently general
(see the proof of \cite[Lemma~1.3]{Levine-2}) so that:
\begin{enumerate}
\item
$(t_{\infty})$ is reduced.
\item
$(t_{\infty}) \cap (t_0) \cap D' = \emptyset$.
\item
$(t_{\infty})$ contains no component of $(t_0)$.
\item
  $D'$ contains no component of $(t_{\infty})$.
\item
  $(t_\infty) \cap \Sigma = \emptyset$.
\end{enumerate}

Denote by $C_{\infty}$ the divisor $(t_{\infty})$. Notice that the
function $h = (f,1)$ is meromorphic on $C\cup C_\infty$ and regular
invertible in a neighborhood of $(C \cup C_\infty) \cap D'$.
Since $Y \cap \Sigma$ is a finite closed subset of $S_X$ which
does not meet $C \cup C_\infty$, we can let $h = 1$ on $Y \cap \Sigma$
and this allows us to extend $h$ to a meromorphic function on
$T := (Y \cap \Sigma) \cup C \cup C_\infty$
which is regular invertible in a neighborhood of
$T' := (Y \cap \Sigma) \bigcup ((C \cup C_\infty) \cap D')$.
Let $S$ denote the finite set $\{\text{poles of $f$}\} \cup (C\cap C_{\infty})$.  
Note that $S \cap T' = \emptyset $ and hence $S \subset (S_X)_\reg \setminus S_A$.

Since $S_X$ is reduced quasi-projective, $X_{\pm}$ (as well as $Y_{\pm}$)
are integral closed subschemes of $S_X$ and $S \subset (S_X)_\reg$, we can apply
\cite[Theorem~3.4, Corollary~3.9]{Ghosh-Krishna-Bertini} to find a very ample line
bundle $\sL$ on $S_X$ and a section $s_\infty \in H^0(S_X, \sL)$ 
(see again \cite[Lemma~1.3]{Levine-2}) so that: 
\begin{listabc}
\item $(s_{\infty})$ and $(s_\infty) \cap Y$ are reduced.
\item
$Y_\pm \not\subset (s_\infty)$.
\item
$(s_\infty) \cap X_{\pm}$ and $(s_\infty) \cap Y_{\pm}$ are integral.
\item
$(s_\infty) \cap T' = \emptyset$.
\item
$(s_\infty) \supset S$.
\item
$C \cup C_{\infty}$ contains no component of $(s_{\infty}) \cap Y$.
\end{listabc}

If $\ov{S_X}$ is the closure of $S_X$ in $\P^n_k$ and if $\sI$ is the ideal sheaf of 
$\ov{S_X} \setminus S_X$ in $\P^n_k$, then we can find a 
section $s'_{\infty}$ of the sheaf $\sI \otimes \sO_{\P^n_k}(m)$ for some
$m \gg 0$, which restricts to a section $s_{\infty}$ on $S_X$ satisfying
the properties (a) - (f) on $S_X = \ov{S_X} \setminus V(\sI)$.
This implies in particular that 
$S_X \setminus (s_{\infty}) = \ov{S_X} \setminus (s'_{\infty})$ 
is affine. Set $\sL = \sO_{\P^n_k}(1)$ and $\sL' = \sL^m$.

We now know that $Y^o_+$ and $Y^o_+ \cap D^\dagger$ are
regular away from $C$ and (d) tells us that 
$(s_\infty)$ intersects $Y_+$ along $D$ at only those points which are away 
from $C \cup X_{\sing}$. It follows that $(s_\infty) \cap Y_+ \cap D \subset
(s_\infty) \cap (Y_+)_{\rm reg}$. On the other hand, we can use
\cite[Proposition~2.3]{Ghosh-Krishna-Bertini}
to ensure that $(s_\infty) \cap (Y_+)_{\rm reg}$ is regular.
The same holds for $Y_-$ and $Y_{\pm} \cap  D^\dagger$ as well.
We conclude that we can choose $\sL'$ and $s_\infty \in H^0(S_X, \sL')$ such that
(a) - (f) above as well as the following hold: \\
\hspace*{1cm} g) \ $(s_{\infty}) \cap Y_{\pm}$  is regular along $D$
and $(s_{\infty}) \cap Y_{\pm} \cap D^\dagger$ is regular.
\\
\hspace*{1cm} h) \ $S_X \setminus (s_\infty)$ is affine.

In particular, the function $h$, which is regular outside $S$, extends to
a regular function $H$ on $U = S_X \setminus (s_{\infty})$. 
Since $H$ is a meromorphic function on $S_X$ which has poles only along
$(s_{\infty})$, it follows that $Hs^N_{\infty}$ is an element of 
$H^0(U, (\sL')^N)$ 
which extends to a section $s_0$ of $(\sL')^N$ on all of $S_X$, if we choose
$N \gg 0$. Note that up to replacing $s_0$ by $s_0s^i_\infty$, we are free to choose
$N$ as large as needed.

Since $s_{\infty}$ and $h$ are both invertible on $T$ along 
$T'$, we see that $s_0$ is invertible on $T$ along $T'$.
In particular, $s_0 \notin H^0(S_X, (\sL')^N\otimes \sI_{C \cup C_\infty})$.
Let $V \subset |H^0(S_X, (\sL')^N)|$ be the linear system generated by
$s_0$ and $H^0(S_X, (\sL')^N\otimes \sI_{T})$.
Then for $N \gg 0$, the base locus of $V$ is the finite set of closed points
$(s_0) \cap (C \cup C_\infty)$ and it defines a locally closed embedding of
$S_X \setminus (C \cup C_\infty)$ into $\P_k(V^\vee)$. Recall that
$(s_0) \cap (Y \cap \Sigma) = \emptyset$.

By applying \cite[Lemma~1.3]{Levine-2} and
\cite[Theorems~3.4, 3.8]{Ghosh-Krishna-Bertini},
we can find a section $s'_0 = s_0 + \alpha$, with $\alpha \in 
H^0(S_X, (\sL')^N\otimes \sI_{T})$ such that the following hold:

\begin{listabcprime}
\item
$(s'_0)$ and $(s'_0) \cap Y$ are reduced away from $C \cup C_\infty$.
\item
$Y \not\subset (s'_0)$.
\item
$(s'_0) \cap X_{\pm}$ and $(s'_0) \cap Y_{\pm}$ are integral away from $C \cup C_\infty$.
\item
$(s'_0) \cap T' = \emptyset$.
\item
$C \cup C_{\infty}$ contains no component of $(s'_0) \cap Y$.
\item
  $(s'_0) \cap Y_{\pm}$ are regular along $D$ and $(s'_0) \cap Y_{\pm} \cap D^\dagger$ is
  regular.
\end{listabcprime}

Since $(s'_0) \cap Y$ does not meet $\Sigma$, and $Y$ is Cohen-Macaulay away from
$\Sigma$ (in fact away from $S_B$),
it follows that $(s'_0) \cap Y$ is Cohen-Macaulay. Since a generically
reduced Cohen-Macaulay scheme is reduced (see \cite[Proposition 14.124]{GW}),
it follows from (a') that $(s'_0) \cap Y$ is reduced. By the same token,
$(s'_0) \cap S_{X^o}$ is reduced. Since $(s'_0) \cap Y$ is Cohen-Macaulay,
and hence has pure dimension one, it follows from (c') that $(s'_0) \cap Y_\pm$ are
also irreducible. We conclude that $(s'_0) \cap Y_\pm$ are integral.
By the same token, $(s'_0) \cap X_{\pm}$ are irreducible.
In particular, $(s'_0) \cap X^o_{\pm}$ are integral.

We now have 
\[
\frac{s'_0}{s^N_{\infty}} = \frac{Hs^N_{\infty} + (\alpha s^{-N}_{\infty})
s^N_{\infty}}{s^N_{\infty}} = H + \alpha s^{-N}_{\infty} = H', \ (\mbox{say}).
\]

Since $\alpha$ vanishes along $C \cup C_{\infty}$ and $s_{\infty}$ is
invertible along $U$, it follows that $H'_{|{(C \cup C_{\infty}) \cap U}} =
H_{|{(C \cup C_{\infty}) \cap U}} = h_{|U}$. In other words, we have
${s'_0}/{s^N_{\infty}} = h$ as rational functions on $C \cup C_{\infty}$.
We can now compute:

\[
\nu_*({\rm div}(f)) = (s'_0) \cdot C - N (s_\infty) \cdot C
\]
\[ 
0 = {\rm div}(1) = (s'_0) \cdot C_{\infty} - N (s_\infty)\cdot C_{\infty}.
\]

Setting $(s^Y_{\infty}) = (s_\infty) \cap Y$ and $(s'^Y_0) = (s'_0) \cap Y$, we get
\[
\begin{array}{lll}
\nu_*(\divf(f)) & = & (s'_0) \cdot (C - C_{\infty}) - 
N(s_{\infty})(C - C_{\infty}) \\
& = & (s'^Y_0) \cdot (\divf({t_0}/{t_{\infty}})) - N(s^Y_{\infty}) \cdot
(\divf({t_0}/{t_{\infty}})) \\
& = & \iota_{{s'^Y_0}, *}(\divf(f')) + \iota_{{s^Y_{\infty}}, *}(\divf(f'')),
\end{array}
\]
where $f' = ({t_0}/{t_{\infty}})|_{(s'^Y_0)} \in 
k((s'^Y_0), D' \cap (s'^Y_0))^{\times}$ (by (d')) and 
$f'' = (-N) (t_0/t_{\infty})|_{(s^Y_\infty)} \in 
k((s^Y_\infty), D'\cap (s^Y_\infty))^{\times}$ (by (d)). 
It follows from (g) and (f') that 
$(s'^Y_{0})_{| X_+}, \ (s'^Y_{0})_{| X_-} , \ (s^Y_{\infty})_{| X_+}$ and 
$(s^Y_{\infty})_{| X_-}$ are all regular along $D$ and their intersections with $D^\dagger$ are
regular. All these curves are also integral
and miss $A \cup B$.
Setting $\sL'' = (\sL')^N, \ t' = s'_0$ and $t'' = s_\infty$,
the curves $C' = (t') \cap Y$ and $C'' = (t'') \cap Y$ together with the 
functions $f'$ and $f''$ satisfy the conditions of the Lemma.
\end{proof}

The following application of \lemref{lem:reduction-basic} will be used
in the proof of \propref{prop:Tau>2}.

\begin{lem}\label{lem:PF}
  Let $X$ and other notations be as above. Let $\wt{X}$ be an integral quasi-projective $k$-scheme
and $\wt{D} \subset \wt{X}$ a closed subscheme such that
$\wt{D} \cap \wt{X}_\reg$ is a Cartier divisor on $\wt{X}_\reg$.
Let  $p \colon \wt{X} \to X$ be a proper morphism such that
$p^*(D) \subset \wt{D}$.
Let $\wt{A}$ be a reduced
  closed subscheme of $\wt{X}$ of codimension at least two such that
  $p^{-1}(A \cup B) \subset \wt{A}$.
  Assume that 
   $p$ is a local complete intersection closed
  immersion in an open neighborhood of $p^{-1}(D^o)$. Then $p$ induces a
  push-forward map
  \[
    p_*  \colon \CH^{LW}_0(S_{\wt{X}}; S_{\wt{A}}) \to
    \CH^{LW}_0(S_X;S_A).
  \]
\end{lem}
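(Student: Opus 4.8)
The plan is to construct the push-forward map $p_*$ at the level of cycle groups and then verify that it descends to the Levine-Weibel Chow groups with modulus. Since $p \colon \wt{X} \to X$ is proper and restricts to a morphism of doubles $S_{\wt{X}} \to S_X$ (using \propref{prop:double*}(4) to identify $S_{\wt{X}}$ with $\wt{X} \times_X S_X$ along the locus where $p^*(D)$ is Cartier), the first step is to define $p_*$ on generating 0-cycles. For a closed point $x \in (S_{\wt{X}})_\reg \setminus S_{\wt{A}}$, its image $p(x)$ lies in $(S_X)_\reg \setminus S_A$ by the hypotheses $p^{-1}(A \cup B) \subset \wt{A}$ and the fact that $p$ is an lci closed immersion near $p^{-1}(D^o)$ (so regular points map to regular points along the relevant locus). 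We set $p_*([x]) = [k(x):k(p(x))]\,[p(x)]$ in the usual way, giving a homomorphism $\sZ_0((S_{\wt{X}})_\reg \setminus S_{\wt{A}}) \to \sZ_0((S_X)_\reg \setminus S_A)$.

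The second and main step is to show that $p_*$ sends relations to relations, i.e.\ $p_*(\sR^{LW}_0(S_{\wt{X}}; S_{\wt{A}})) \subset \sR^{LW}_0(S_X; S_A)$. This is exactly where \lemref{lem:reduction-basic} enters. Rather than trying to control the image of an arbitrary Cartier curve on $S_{\wt{X}}$ under $p$, I would first apply \lemref{lem:reduction-basic} on $S_{\wt{X}}$ to reduce to relations coming from Cartier curves $\wt{C}$ of the very special shape produced there: namely $\wt{C} = \wt{Y} \cap (t)$ for a very ample section, with $\wt{C}_\pm$ integral curves in $\wt{X}$ that are Cartier everywhere on $\wt{X}$ and regular along $\wt{D}$. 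For such a curve, the proper push-forward $C := p_*(\wt{C})$ (as a cycle, or its support together with the norm of the rational function) is again a genuine curve on $X$, and the function $f$ on $\wt{C}$ pushes forward via the norm map $\Nm_{\wt{C}/C}$ to a rational function on $C$. The key point is that because $p$ is an lci closed immersion in a neighborhood of $p^{-1}(D^o)$, the curve $\wt{C}$ being Cartier and regular along $\wt{D}$ forces its image to be Cartier and regular along $D$, and the modulus condition $f \in \Ker(\sO^\times_{\wt{C}, \wt{E}} \surj \sO^\times_{\wt{E}})$ is preserved under the norm since $p^*(D) \subset \wt{D}$ guarantees $\wt{E} = \nu^*(\wt{D})$ dominates $E = \nu^*(D)$. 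The compatibility $p_*(\divf(f)) = \divf(\Nm_{\wt{C}/C}(f))$ is the standard functoriality of divisors of rational functions under proper (here finite on $\wt{C}$) maps.

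The hardest part of this argument is controlling the behavior along $D$ and $D'$: one must ensure that the pushed-forward curve $C$ still satisfies conditions (1)--(4) in the definition of $\sR^{LW}_0(X|D;A\cup B)$, in particular that $C$ is regular at every point of $C \cap D$ and that $C \cap D^\dagger$ is regular. This is precisely why the reduction via \lemref{lem:reduction-basic} is indispensable: its output curves are \emph{regular along $D$} and have $\wt{C}_\pm \cap \wt{D}^\dagger$ regular, and the lci-closed-immersion hypothesis on $p$ near $p^{-1}(D^o)$ translates these regularity properties downstairs. I would handle this by working locally at points of $C \cap D'$, where $p$ is a closed immersion defined by a regular sequence, so that the local structure of $\wt{C}$ along $\wt{D}$ maps isomorphically (or via a controlled finite map in the horizontal direction) onto that of $C$ along $D$. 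Away from $D'$, no modulus or regularity constraints are imposed, so ordinary proper push-forward of rational equivalence on the quasi-projective scheme $(S_X)_\reg \setminus S_A$ suffices.

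Finally, having verified the inclusion on the special generators supplied by \lemref{lem:reduction-basic}, I would conclude that $p_*$ descends to a well-defined homomorphism $\CH^{LW}_0(S_{\wt{X}}; S_{\wt{A}}) \to \CH^{LW}_0(S_X; S_A)$, completing the proof. The main obstacle, as indicated, is the local analysis along $D'$ ensuring that the Cartier and regularity conditions defining the Levine-Weibel modulus relations are stable under $p_*$; everything else is a routine application of the norm construction and the reduction lemma.
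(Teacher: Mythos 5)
Your strategy coincides with the paper's: define $p_*$ on the cycle groups, invoke \lemref{lem:reduction-basic} to reduce the relations of $\CH^{LW}_0(S_{\wt{X}};S_{\wt{A}})$ to divisors of functions on the special Cartier curves produced there, and then push those forward using the hypothesis that $p$ is an lci closed immersion near $p^{-1}(D^o)$ (the paper's own proof is exactly this and is equally terse about the final verification). One slip worth flagging: in your third paragraph you verify that the image curve satisfies the conditions defining $\sR^{LW}_0(X|D;A\cup B)$ --- regularity at $C\cap D$, regularity of $C\cap D^\dagger$ --- and you describe $p_*(\wt{C})$ as a curve on $X$. But the target of $p_*$ is $\CH^{LW}_0(S_X;S_A)$, so the image curve lives on the double $S_X$ and what must be checked is only membership in $\sR^{LW}_0(S_X;S_A)$: namely that $p(\wt{C})$ is a Cartier curve on $S_X$ relative to $(S_X)_\sing=D'$ missing $S_A$, and that the normed function is regular invertible along $p(\wt{C})\cap D'$. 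These are weaker requirements (no regularity along $D$ or of the intersection with $D^\dagger$ is needed), and they follow from \propref{prop:double*}(3)--(4), which make $S_{\wt{X}}\to S_X$ an lci closed immersion near $D'$ so that Cartier-ness composes, together with $p^{-1}(A\cup B)\subset\wt{A}$ and $X_\sing\subset B$ to keep the image away from $S_A$ and $(X_\sing)_\pm$. So your ``hardest part'' is aimed at the wrong relation group; once retargeted, the argument closes and agrees with the paper.
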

\begin{proof}
  It is clear from our assumptions that there is a push-forward map
  $p_* \colon \sZ_0((S_{\wt{X}})_\reg \setminus S_{\wt{A}}) \to
  \sZ_0((S_X)_\reg \setminus S_A)$.
  To show that it preserves rational equivalences, it suffices to consider
  Cartier curves $C \subset S_{\wt{X}}$ and functions $f \in k(C, C \cap
  (S_{\wt{X}})_\sing)^{\times}$, where $C \inj S_{\wt{X}}$ satisfies properties (1) - (4) of
  \lemref{lem:reduction-basic}. In this case, it is clear that
  $p_*(\divf(f)) \in \sR^{LW}_0(S_X; S_A)$.
\end{proof}

\section{The pull-back map $\tau^*_X$}\label{sec:PB}
Let $k$ be a field. We fix an integral quasi-projective $k$-scheme $X$ of
dimension $d \ge 1$. Let $D \subset X$
be a closed subscheme such that $D^o = D \cap X^o$ is
a Cartier divisor on $X^o$.
We let $D' = D_\red \cup (X_\sing)_+ \cup (X_\sing)_-  = (S_X)_\sing$.
Let $A, B \subset X$ be reduced closed subschemes of dimensions $\le d-2$  such that
$X_\sing \subset B \subset X_\sing \cup D_\red$. We let $\Sigma = S_A \cup S_B =
S_{A\cup B}$.

We define 
\begin{equation}\label{eqn:tau^*-0}
  \tau^*_X\colon  \sZ_0((S_X)_\reg \setminus S_A) \to
  \sZ_0(X^o \setminus (D \cup A)) \quad \text{by} \quad
\tau^*_X([x]) = \iota^*_+([x]) - \iota^*_-([x]),
\end{equation}
where $x \in (S_X)_\reg$ is a closed point and $\iota_\pm \colon X_\pm \inj
S_X$ are inclusions. We shall call $\tau^*_X$ the `difference homomorphism'.

If $d = 1$, then it is shown in \cite[\S~5.5]{Binda-Krishna} that $\tau^*_X$ descends
to a map
\begin{equation}\label{eqn:tau^*-1}
\tau^*_X\colon  \CH_0^{LW}(S_X) = \CH_0(S_X) \to \CH^{LW}_0(X|D).
\end{equation}

The goal of this section is to generalize this to higher dimensions when
$k$ is infinite. We shall therefore assume throughout this section that
$k$ is infinite.

\subsection{The case of surfaces}\label{sec:dim-2}
We consider the case $d = 2$ in this subsection.
We shall prove the higher dimension case by reduction to this case.

\begin{lem}\label{lem:Tau-2}
 Assume that $d =2$.  
Then $\tau^*_X$ descends to a homomorphism
  \[
\tau^*_X\colon  \CH_0^{LW}(S_X;S_A)  \to \CH^{LW}_0(X|D; A \cup B).
  \]
\end{lem}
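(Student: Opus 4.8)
**Proof plan for Lemma 1.18 (the surface case of $\tau^*_X$).**

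The goal is to show that the difference homomorphism $\tau^*_X$, defined on cycle groups by $\tau^*_X([x]) = \iota^*_+([x]) - \iota^*_-([x])$, descends to a well-defined map $\CH_0^{LW}(S_X;S_A) \to \CH^{LW}_0(X|D;A\cup B)$ when $\dim(X) = 2$. Since $\tau^*_X$ is already defined on the free group $\sZ_0((S_X)_\reg \setminus S_A)$, what must be checked is that it carries the subgroup $\sR^{LW}_0(S_X;S_A)$ of relations into $\sR^{LW}_0(X|D;A\cup B)$. By definition, $\sR^{LW}_0(S_X;S_A)$ is generated by divisors $\divf(f)$ of functions $f \in k(C, C\cap D')^\times$ on Cartier curves $C \inj S_X$ (along $D' = (S_X)_\sing$) that miss $S_A$. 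So the plan is to verify that for each such generator, the image $\tau^*_X(\divf(f))$ lies in $\sR^{LW}_0(X|D;A\cup B)$.

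The key reduction is \lemref{lem:reduction-basic}, which applies in the case $d=2$ with $Y = S_X$: given any Cartier curve $C$ and function $f$, I can replace the relation $\nu_*(\divf(f))$ by a sum $\nu'_*(\divf(f')) + \nu''_*(\divf(f''))$ coming from two Cartier curves $C', C''$ whose restrictions $C'_\pm, C''_\pm$ are integral curves in $X$, Cartier everywhere on $X$, regular along $D$, and with $C'_\pm \cap D^\dagger, C''_\pm \cap D^\dagger$ regular, all missing $A\cup B$ and $\Sigma$. Thus it suffices to treat a single Cartier curve $C \inj S_X$ of this special shape, whose halves $C_\pm \subset X_\pm$ are integral curves with the good regularity properties above. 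The crux is then purely local computation of the difference: writing $C = C_+ \amalg_D C_-$, the function $f \in k(C, C\cap D')^\times$ restricts to $f_\pm$ on $C_\pm$, and I must compute $\iota^*_+(\divf(f)) - \iota^*_-(\divf(f))$ and recognize it as a relation with modulus $D$ on $X$.

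The main idea for identifying the image as a modulus relation is the gluing condition inherent in the double. Because $C$ is a curve on $S_X = X\amalg_D X$ and $f$ is a single rational function on $C$, its two branches $f_+$ and $f_-$ are forced to agree along $C\cap D = E$; that is, $f_+ \cdot f_-^{-1}$ restricts to the constant $1$ on $E$. Equivalently, the function $g := f_+/f_-$ on the curve $C_+ \cup C_-$ (viewed in $X$ via $\Delta$) lies in $\Ker(\sO^\times_{C,E} \surj \sO^\times_E)$, which is exactly the condition that $g$ has modulus $D$. Since $\iota^*_\pm$ simply reads off the divisor of $f$ on each branch and pushes it into $X$, the difference $\tau^*_X(\divf(f)) = \divf(f_+)_{C_+} - \divf(f_-)_{C_-}$ becomes the divisor of a function with modulus $D$ supported on the integral Cartier curves $C_\pm$ (which by \lemref{lem:reduction-basic} are Cartier everywhere on $X$, regular along $D$, with $C_\pm \cap D^\dagger$ regular). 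This is precisely a generator of $\sR^{LW}_0(X|D;A\cup B)$ as defined via conditions (1)--(4) in \S\ref{sec:LWGM}.

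I expect the main obstacle to be the careful bookkeeping at the gluing locus $D$: one must verify that the modulus condition $f_+/f_- \equiv 1$ on $E$ genuinely follows from $f$ being a well-defined function on the singular curve $C$ along $D'$, and that the resulting function on $C_\pm \subset X$ satisfies the precise regularity hypotheses (2), (3) required to be an admissible generator of $\sR^{LW}_0(X|D;A\cup B)$. This is where the regularity properties guaranteed by \lemref{lem:reduction-basic}---namely that $C'_\pm, C''_\pm$ are integral, Cartier everywhere, regular along $D$, with $C'_\pm\cap D^\dagger$ and $C''_\pm\cap D^\dagger$ regular---are essential, and I would need to track them through the divisor computation to confirm the image lands in the correct relation subgroup. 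The fact that $C_\pm$ miss $A\cup B$ (ensuring the cycles live in $\sZ_0(X^o\setminus(D\cup A))$) follows from condition (3) of \lemref{lem:reduction-basic}.
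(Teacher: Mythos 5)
Your overall skeleton matches the paper's: reduce via \lemref{lem:reduction-basic} to Cartier curves $C = C_+ \amalg_E C_-$ whose halves $C_\pm$ are integral, Cartier everywhere, regular along $D$, and then exploit the gluing condition $(f_+)|_E = (f_-)|_E$ coming from the exact sequence $0 \to \sO_{C,E} \to \sO_{C_+,E}\times\sO_{C_-,E} \to \sO_E \to 0$. This correctly disposes of the case $C_+ = C_-$, where $g = f_+f_-^{-1}$ is a genuine rational function on the single integral curve $C_+ = C_-$ with modulus $D$, and $\tau^*_X(\divf(f)) = \divf(g)$.

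However, there is a genuine gap: you have not addressed the case $C_+ \neq C_-$, which is the heart of the matter and occupies most of the paper's proof. When $C_+$ and $C_-$ are two distinct integral curves in $X$ agreeing only along $D$, the expression $f_+/f_-$ is not a rational function on any single curve --- $f_+$ lives on $C_+$ and $f_-$ on $C_-$, so their quotient is defined only on the finite set $C_+\cap C_-$. Your phrase ``the function $g := f_+/f_-$ on the curve $C_+\cup C_-$'' does not make sense, and in any case the generators of $\sR^{LW}_0(X|D;A\cup B)$ require an \emph{integral} curve, which $C_+\cup C_-$ is not. The difference $\divf(f_+)_{C_+} - \divf(f_-)_{C_-}$ is therefore not visibly a modulus relation: neither $f_+$ nor $f_-$ individually satisfies $f_\pm|_E = 1$, only their restrictions to $E$ agree. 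The paper resolves this by a lengthy auxiliary construction: choosing sections $s_\infty$ and ${s'_0}^\pm$ of powers of a very ample line bundle on $X$ with $s_0'^{\pm}/s_\infty^N$ interpolating $f_\pm$, and decomposing $\tau^*_X(\divf(f))$ as a sum of three terms $({s'_0}^+)\cdot\divf(t_+/t_-) + C_-\cdot\divf(H_+/H_-) - N(s_\infty)\cdot\divf(t_+/t_-)$, each of which is the divisor of a function with modulus $D$ on a single integral curve ($({s'_0}^+)$, $C_-$, $(s_\infty)$ respectively); here $t_+/t_-\equiv 1$ on $D$ because $t$ is a single section on $S_X$, and $H_+/H_-\equiv 1$ on $E$ because $H_\pm|_E = f_\pm|_E$. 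Without some such device your argument does not close.
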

\begin{proof}
We fix a locally closed embedding $S_X \inj \P^n_k$.  
We have shown in \lemref{lem:reduction-basic} that in order to
prove that $\tau^*_X$ preserves the subgroups of rational equivalences,
it suffices to show that $\tau^*_X(\divf(f)) \in \sR^{LW}_0(X|D;A \cup B)$,
where $f$ is a 
rational function (which is regular and invertible along $D'$)
on a Cartier curve $\nu\colon  C \inj S_X$ that we can 
choose in the following way:
\begin{enumerate}
\item
There is a very ample line bundle $\sL$ on $S_X$ and a section
$t \in H^0(S_X, \sL)$ such that $C = (t)$.
\item
  $C$ is a reduced Cartier divisor of the form $C = C_+ \amalg_E C_-$,
  where $E = \nu^*(D)$.
\item
  $C \cap \Sigma = \emptyset$.
\item
The restrictions of $C$ to $X$ via the two closed 
immersions $\iota_{\pm}$ are integral curves $C_\pm$ in $X$,
which are Cartier everywhere, are regular along $D$ and whose intersections with
$D^\dagger$ are regular.
\end{enumerate}

If $E = \emptyset$, then $C_\pm$ are two integral curves on $X$
away from $D \cup A\cup B$ and $\tau^*_X(\divf(f)) = \divf(f|_{C_+}) - \divf(f|_{C_-})
\in \sR^{LW}_0(X|D;A \cup B)$. We can thus assume that $E \neq \emptyset$.
Then (1) implies that 
\begin{equation}\label{eqn:map-diff-surfaces-0*}
(t_+)_{|D} = \iota'^*(t) = (t_-)_{|D},
\end{equation}
where recall that $\iota' = \iota_+ \circ \iota = \iota_- \circ \iota \colon  
D \inj S_X$ denotes the inclusion map. 

Let $(f_+, f_-)$ be the image of $f$ in $\sO^{\times}_{C_+, E} \times 
\sO^{\times}_{C_-,E} \inj k(C_+) \times k(C_-)$.
It follows from \cite[Lemma~2.2]{Binda-Krishna} that there is an exact sequence
\begin{equation}\label{eqn:D*}
0 \to \sO_{C,E} \to \sO_{C_+,E} \times \sO_{C_-, E} \to \sO_E \to 0.
\end{equation}
In particular, we have 
\begin{equation}\label{eqn:map-diff-surfaces-0} 
(f_+)_{|E} = (f_-)_{|E} \in \sO^{\times}_E .
\end{equation}

Let us first assume that $C_+ = C_-$ as curves on $X$. 
Let $C'$ denote this curve. 
Since $f$ is regular and invertible on $C$ along $D'$, we get 
$f_+, f_- \in \sO^{\times}_{C',E}$.
Setting $g := f_+ f^{-1}_- \in \sO^{\times}_{C',E}$, it
follows from ~\eqref{eqn:map-diff-surfaces-0} that
$g \in \Ker(\sO^{\times}_{C',E}  \to \sO^{\times}_E)$. 
Moreover, $\tau^*_X(\divf(f)) = \iota^*_+(\divf(f)) -
\iota^*_-(\divf(f)) = \divf(f_+) - \divf(f_-) = \pi_*(\divf(g))$,
where $\pi \colon C' \inj X$ is the inclusion.
We conclude from (2) and (4) that
$\tau^*_X(\divf(f))$ dies in $\CH^{LW}_0(X|D;A \cup B)$.

We now assume that $C_+ \neq C_-$ and write  $\Delta(C) = C_+ \cup C_- \subset X$.
If $\sI_{C_\pm}$ are the ideal sheaves defining $C_\pm$, then $\Delta(C)$ is defined by
$\sI_{C_+} \cap \sI_{C_-}$. Since $\sI_{C_\pm}$ are sheaves of prime ideals, it
follows that $\Delta(C)$ is reduced with two irreducible components $C_\pm$.
Furthermore, $\Delta(C) \cap (A \cup B) = \emptyset$.
Since $C_+ \cap D = C_- \cap D = E$ as closed subschemes, 
we see that the support of $\Delta(C) \cap D$ is same as $E_\red$.

Since $A \cup B$ is a finite closed subscheme of $X$
and $\Delta(C) \cap (A \cup B) = \emptyset$, the functions 
$f_\pm$ extend to meromorphic functions on
$T_\pm = A \cup B \cup C_\pm$ which are regular invertible in a neighborhood of 
$T' = A \cup B \cup E$ by letting $f_\pm = 1$ on $A \cup B$.
Let $S_\pm$ denote the set of closed points
on $C_\pm$, where $f_\pm$ have poles.
We let $T = T_+ \cup T_- = A \cup B \cup \Delta(C)$ and $S = S_+ \cup S_-$.
It is clear that $S \cap (D \cup A \cup B) = \emptyset$. In particular, $S \subset X^o$.
We now repeat the constructions in the proof of \lemref{lem:reduction-basic}
to find a very ample line bundle $\sL$ on $X$ and a section 
$s_{\infty} \in H^0(X, \sL)$ (see \cite[Lemma~1.4]{Levine-2} and
\cite[Corollary~3.9]{Ghosh-Krishna-Bertini}) such that:
\begin{listabc}
\item $(s_{\infty})$ is integral (because $X$ is integral);
\item $(s_\infty) \cap T' = \emptyset$;
\item $(s_\infty) \supset S$;
\item $(s_{\infty}) \not\subset \Delta(C)$;
\item $(s_{\infty})$ is regular away from $S$;
\item $(s_\infty) \cap D^\dagger$ is regular.
\item $X \setminus (s_\infty)$ is affine.
\end{listabc}

It follows that $f_{\pm}|_{T_\pm \setminus (s_\infty)}$ 
extend to regular functions $F_{\pm}$ on
$X \setminus (s_\infty)$. Since $F_{\pm}$ are meromorphic functions
on $X$ which have poles only along $(s_\infty)$, it follows that 
$F_{\pm}s^N_{\infty}$ are elements of $H^0(X \setminus (s_\infty), \sL^N)$ which 
extend to sections $s_0^{\pm}$ of $\sL^N$ on all of $X$, if we choose $N \gg 0$.
Since the functions $s_\infty$ and $F_\pm$ are all meromorphic functions
on $X$ which are regular on $X \setminus (s_\infty)$, 
it follows that each of them 
restricts to a meromorphic function on $T$ which is regular on
$T \setminus (s_\infty)$ and 
$F_\pm|_{T\pm \setminus (s_\infty)} = f_\pm|_{T_\pm \setminus (s_\infty)}$.
Since $s_\infty$ and $F_\pm$ are invertible on $T_\pm$ in some neighborhood of
$T'$, we see that $s_0^{\pm}$ are invertible on $T_\pm$ in some
neighborhood of $T'$.
In particular, $s_0^{\pm} \notin H^0(X, \sL^N\otimes \sI_{T})$.
An argument identical to the one in the proof of \lemref{lem:reduction-basic}
allows us to find sections ${s'_0}^{\pm} = s_0^{\pm} + \alpha^\pm$ with
$\alpha^\pm \in H^0(X, \sL^N\otimes \sI_{T})$ such that $({s'_0}^{\pm})$ satisfy the
following:
\begin{listabcprime}
 \item $({s'_0}^{\pm})$ are integral;
\item
$({s'_0}^{\pm}) \not\subset \Delta(C)$; 
\item $({s'_0}^{\pm}) \cap T'  = \emptyset$;
\item
  $({s'_0}^{\pm})$ are regular  along $D$.
\item
  $({s'_0}^{\pm}) \cap D^\dagger$ are regular.
\end{listabcprime} 

We then have 
\begin{equation}\label{eqn::map-diff-surfaces-10}
\frac{{s'_0}^{\pm}}{s^N_{\infty}} = 
\frac{F_{\pm}s^N_{\infty} + (\alpha^{\pm} s^{-N}_{\infty})
s^N_{\infty}}{s^N_{\infty}} = F_{\pm} + \alpha^{\pm} s^{-N}_{\infty} = H_{\pm}, \ 
(\mbox{say}).
\end{equation}

We can now find a dense open subscheme $U' \subset 
X \setminus ((s_\infty) \cup ({s'_0}^+) \cup ({s'_0}^-))$
which contains $T'$ and where $F_\pm, \alpha^\pm, s_\infty$ and 
${s'_0}^\pm$ are all regular. 
In particular, $H_\pm$ are rational functions on $X$
which are regular on $U'$.

Since $T' \subset U'$ and $E \neq \emptyset$, 
it follows that $T_\pm \cap U'$ are dense open in $T_\pm$.
It follows that $s_\infty$ and ${s'_0}^\pm$ restrict to regular functions
on $T_\pm \cap U'$ which are invertible in a neighborhood of $T'$.
Since $\alpha^{\pm}$ vanish along $T$ and $s_{\infty}$ is
invertible on $U'$, it follows that ${H_{\pm}}_{|{T_\pm \cap U'}} =
{F_{\pm}}_{|{T_\pm\cap U'}}$.
Since $F_\pm$ restrict to regular functions on $T_\pm \cap U'$ which
are invertible along $T'$, it follows that $H_\pm$ restrict to
regular functions on $T_\pm \cap U'$ which are invertible along $T'$.

As $H_+$ (resp. $H_-$) is regular on $U'$, it restricts to
a regular function on the dense open subset $C_- \cap U'$ 
(resp. $C_+ \cap U'$) of
$C_-$ (resp. $C_+$). Furthermore,  we have
\begin{equation}\label{eqn:map-diff-surfaces-1*}
{H_+}_{|E} = {F_+}_{|E} = {f_+}_{|E} \ =^{\dagger} \ {f_-}_{|E} 
= {F_-}_{|E} = {H_-}_{|E},
\end{equation}
where $\dagger$ follows from ~\eqref{eqn:map-diff-surfaces-0}.

We thus saw above that $H_+$ and $H_-$ are both regular functions on
$C_- \cap U'$ such that $H_- \neq 0$. 
In particular, ${H_+}/{H_-}$ is a rational function on $C_-$. 
Since ${s'_0}^+$ and ${s'_0}^-$ are both regular
 functions on $C_- \cap U'$ which are invertible in a neighborhood of $C_-\cap D$, 
it follows that the restriction of ${{s'_0}^+}/{{s'_0}^-}$ on $C_-$ is a 
rational function on $C_-$, which is regular and invertible  
in a neighborhood of   $C_- \cap D$.
On the other hand, we have 
\begin{equation}\label{eqn:map-diff-surfaces-2}
\frac{{s'_0}^+}{{s'_0}^-} = \frac{{{s'_0}^+}/{s^N_\infty}}{{{s'_0}^-}/{s^N_\infty}}
= \frac{H_+}{H_-}, 
\end{equation}
as rational functions on $X$.
In particular, ${s'_0}^+ \cdot {H_-} = {{s'_0}^-} \cdot {H_+}$
as regular functions on $U'$.
Hence, this identity holds after restricting these regular functions
to $C_- \cap U'$. We thus get 
\begin{equation}\label{eqn:map-diff-surfaces-20}
\frac{{s'_0}^+}{{s'_0}^-} = \frac{{{s'_0}^+}/{s^N_\infty}}
{{{s'_0}^-}/{s^N_\infty}} = \frac{H_+}{H_-}, 
\end{equation}
as rational functions on $C_-$. Note that $H_-$ is nonzero on $C_-$.
Since $\frac{{s'_0}^+}{{s'_0}^-}$ restricts to a rational function on $C_-$
which is regular and invertible in a neighborhood of   $C_- \cap D$,
 we conclude
that ${H_+}/{H_-}$ restricts to an identical rational function on $C_-$, 
which is  regular and invertible  in a neighborhood of   $C_- \cap D$. 

We now compute
\[
\begin{array}{lll}
\tau^*_X(\divf(f)) & = & \iota^*_+(\divf(f)) -
\iota^*_-(\divf(f)) \\
& = & 
\divf(f_+) - \divf(f_-) \\
& = & 
\left[({s'_0}^+) \cdot C_+ - (s^N_{\infty}) \cdot C_+\right]
- \left[({s'_0}^-)  \cdot C_- - (s^N_{\infty}) \cdot C_-\right] \\
& = & \left[({s'_0}^+)  \cdot C_+ - ({s'_0}^+)  \cdot C_-\right]
+ \left[({s'_0}^+)  \cdot C_- - ({s'_0}^-)  \cdot C_-\right] \\
& & - \left[(s^N_{\infty}) \cdot C_+ - (s^N_{\infty}) \cdot C_-\right] \\
& = & \left[({s'_0}^+)  \cdot (C_+ - C_-)\right] +
\left[C_- \cdot (({s'_0}^+)  - ({s'_0}^-) )\right] -
\left[(s^N_{\infty}) \cdot (C_+ - C_-)\right] \\
& = & ({s'_0}^+)  \cdot (\divf({t_+}/{t_-})) + C_- \cdot 
(\divf({{s'_0}^+}/{{s'_0}^-})) - N(s_{\infty}) \cdot (\divf({t_+}/{t_-})) \\
& = &  ({s'_0}^+)  \cdot (\divf({t_+}/{t_-})) + C_- \cdot 
(\divf({H_+}/{H_-})) - N(s_{\infty}) \cdot (\divf({t_+}/{t_-})).
\end{array}
\]

It follows from (b) and (c') that $t_{\pm}$ restrict to regular invertible
functions on $({s'_0}^+)$ and $(s_{\infty})$ along $D$.
We set $h_1 = {(\frac{t_+}{t_-})}_{| ({s'_0}^+)}, \ 
h_2 = {(\frac{H_+}{H_-})}_{| C_-}$ and
$h_3 =  {(\frac{t_+}{t_-})}_{| s_{\infty}}$.
Let $\nu_1\colon ({s'_0}^+) \inj X$, $\nu_2\colon  C_-  \inj X$ and
$\nu_3\colon  (s_\infty) \inj X$ denote the inclusion maps.
We now note that $({s'_0}^+), \ C_-$ and $(s_\infty)$ are integral Cartier divisors on
$X$ which are regular along
$D$ by (4), (e) and (d'). Their intersections with $D^\dagger$ are regular by (4), (f) and
$(e')$. Furthermore, none of these meets $A \cup B$
by (3), (b) and (c'). Combining these with
~\eqref{eqn:map-diff-surfaces-0*} and ~\eqref{eqn:map-diff-surfaces-1*},
we conclude that the cycles $(\nu_1)_*(h_1), \ (\nu_2)_*(h_2)$ and $(\nu_3)_*(h_3)$
die in $\CH^{LW}_0(X|D; A \cup B)$. This finishes the proof.
\end{proof}

\subsection{Case of higher dimensions}\label{sec:d>2}
We now extend \lemref{lem:Tau-2} to higher dimensions. 
We let $X$ be as described in the beginning of \S~\ref{sec:PB}. 

\begin{prop}\label{prop:Tau>2}
  Assume that $d \ge 2$. Then $\tau^*_X$ descends to a homomorphism
  \[
\tau^*_X\colon  \CH_0^{LW}(S_X;S_A)  \to \CH^{LW}_0(X|D;A \cup B).
  \]
\end{prop}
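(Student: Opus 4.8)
The plan is to deduce the general case from the surface case \lemref{lem:Tau-2}. For $d = 2$ the statement is exactly \lemref{lem:Tau-2}, so throughout I assume $d \ge 3$.

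First I would use \lemref{lem:reduction-basic} to restrict the relations that must be tested. By that lemma, $\sR^{LW}_0(S_X; S_A)$ is generated by the cycles $\divf(f)$ attached to Cartier curves $C = C_+ \amalg_E C_-$ on $S_X$ for which $C_\pm \subset X$ are integral curves, Cartier everywhere on $X$ and regular along $D$, with $C_\pm \cap D^\dagger$ regular, $C \cap \Sigma = \emptyset$, and $f \in k(C, C \cap D')^\times$. Because $\Sigma = S_{A \cup B}$ contains $(X_\sing)_\pm$, the condition $C \cap \Sigma = \emptyset$ forces $C_\pm$ into the regular locus $X^o$. Applying the difference homomorphism yields $\tau^*_X(\divf(f)) = \divf(f_+)_{C_+} - \divf(f_-)_{C_-}$, a $0$-cycle on $X^o \setminus (D \cup A \cup B)$, and the whole problem is reduced to proving that this cycle lies in $\sR^{LW}_0(X|D; A \cup B)$.

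The geometric core of the argument is to place both curves on a single surface. Assuming $C_+ \ne C_-$ and setting $\Delta(C) = C_+ \cup C_-$, a reduced curve inside the regular — hence Cohen--Macaulay — integral quasi-projective scheme $X^o$, I would apply \lemref{lem:Cartier-higher} to $X^o$, with $D^o$, $\Delta(C)$ and $A \cup B$ in the roles of $D$, $C$ and $A$. This produces general hypersurfaces $H_1, \dots, H_{d-2}$ through $\Delta(C)$ such that $Z := X^o \cap H_1 \cap \cdots \cap H_{d-2}$ is an integral Cohen--Macaulay surface, regular away from a finite subset of $\Delta(C)$ (so $Z$ is an $R_1$-surface) on which the $C_\pm$ are Cartier curves along $D$, meeting $A \cup B$ only in a finite set disjoint from $\Delta(C)$. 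Writing $D_Z = D^o \cap Z$, the curves satisfy $C_+ \cap D_Z = C_- \cap D_Z = E$, so $C = C_+ \amalg_E C_-$ sits inside the double $S_Z$, and by \lemref{lem:Cartier} it is a Cartier curve there along $D$; moreover $S_Z \hookrightarrow S_{X^o} \hookrightarrow S_X$ is compatible with $\iota_\pm$, so $\tau^*_Z(\divf(f))$ computed on $Z$ equals $\tau^*_X(\divf(f))$. The degenerate case $C_+ = C_-$ is disposed of exactly as in \lemref{lem:Tau-2}.

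Now I would invoke the surface case for $Z$. Choosing the finite auxiliary sets $A_Z \supset Z \cap (A \cup B)$ and $B_Z = Z_\sing$, \lemref{lem:Tau-2} applied to the $R_1$-surface $Z$ gives $\tau^*_Z(\divf(f)) \in \sR^{LW}_0(Z|D_Z; A_Z \cup B_Z)$. Finally I would push this relation forward along the locally closed immersion $Z \hookrightarrow X^o \subset X$. A generator of $\sR^{LW}_0(Z|D_Z; A_Z \cup B_Z)$ is the divisor of a function on an integral curve $\gamma \subset Z$ that is regular along $D_Z$, has $\gamma \cap D^\dagger$ regular, and avoids $A_Z \cup B_Z$; viewed in $X$, such a $\gamma$ is an integral curve lying in the regular locus $X^o$, regular at the points of $\gamma \cap D$, with $\gamma \cap D^\dagger$ regular and $\gamma \cap (A \cup B) = \emptyset$. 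Since $\gamma$ is regular along $D$ inside the regular scheme $X^o$, it is automatically Cartier relative to $D \cup A \cup B$ there, so its divisor is literally a defining relation of $\sR^{LW}_0(X|D; A \cup B)$ and the underlying $0$-cycle is unchanged by the inclusion. Hence $\tau^*_X(\divf(f)) \in \sR^{LW}_0(X|D; A \cup B)$, which is what we want.

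I expect the main obstacle to lie in making the passage to the surface $Z$ rigorous near the finitely many singular points of the section. By construction $Z$ is regular and the $C_\pm$ Cartier on $Z$ only along $D$ (via \lemref{lem:Open}) and away from a finite bad subset of $\Delta(C)$; at the remaining points of $(Z_\sing)_\pm$ one must ensure that $C$ is still a legitimate Cartier curve on $S_Z$ and that $f$ remains invertible there, since these points lie in $X^o$ but are new singularities created by the section. Controlling this forces one to choose the $H_i$ compatibly both with $D$ and with the support of $\divf(f)$, and — where a clean reduction to $S_Z$ is unavailable — to re-run the construction of \lemref{lem:Tau-2} directly on $Z$, using only restrictions to the $C_\pm$ of globally Cartier auxiliary divisors cut out on $Z$. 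The conceptual point that makes the descent work, and which I would stress, is that the Levine--Weibel modulus relations on $X$ only demand regularity of the test curves along $D$ within the regular locus $X^o$, not local completeness of intersection throughout $X$; this is exactly what allows relations manufactured on the surface $Z$ to survive as relations on $X$.
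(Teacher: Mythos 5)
Your overall strategy --- reduce to the surface case of \lemref{lem:Tau-2} by placing $\Delta(C)=C_+\cup C_-$ on a two-dimensional hypersurface section produced by \lemref{lem:Cartier-higher} --- is exactly the paper's strategy, and your preliminary reductions (via \lemref{lem:reduction-basic}, and the case $C_+=C_-$) match the paper. But the step you yourself flag as ``the main obstacle'' is a genuine gap, and the remedies you sketch do not close it. \lemref{lem:Cartier-higher} only gives a surface $T$ that is regular away from a finite subset $Z_T\subset\Delta(C)$, and only makes $C_\pm$ Cartier on $T$ \emph{along $D$}. The troublesome points are those of $Z_2:=(Z_T\cap T_\sing)\setminus D$: they lie on $\Delta(C)$, away from $D$ and from $X_\sing$, so (i) $f_\pm$ may well have zeros or poles there, in which case $\divf(f_+)-\divf(f_-)$ is not even an element of $\sZ_0(T_\reg\setminus(D\cup A_Z\cup B_Z))$ and the surface-case map $\tau^*_T$ cannot be evaluated on it; and (ii) $C_\pm$ need not be locally principal on $T$ at these points, so $C_+\amalg_E C_-$ need not be a Cartier curve on $S_T$ relative to $(S_T)_\sing\supset (T_\sing)_\pm$. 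Your suggested fixes --- choosing the $H_i$ to also behave well at the support of $\divf(f)$, or ``re-running'' the surface argument --- are not carried out, and the Bertini inputs (\lemref{lem:Bertini-0}, \lemref{lem:Bertini}) only control regularity of the section at a \emph{prescribed} finite set of regular points of the curve; they do not let you simultaneously force $Z_T$ to avoid the support of $\divf(f_\pm)$ and make $C_\pm$ Cartier at whatever singular points remain (for instance at points of $C_+\cap C_-$). The paper's resolution is a different device you are missing: a proper birational modification $\pi\colon\wt{T}\to T$ (blow-ups centred over $Z_2$) which kills exactly these singularities, so that $\wt{T}_\sing\cap\wt{C}_\pm\subset\wt{D}$; one then applies \lemref{lem:Tau-2} to $\wt{T}$ with the strict transforms $\wt{C}_\pm$ and pushes the resulting relation forward along $p=\iota\circ\pi\colon\wt{T}\to X$ using \lemref{lem:PF}. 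That push-forward step has no counterpart in your outline.

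A second, related defect: you work with the \emph{open} surface $Z=X^o\cap H_1\cap\cdots\cap H_{d-2}$ and then ``push forward along the locally closed immersion $Z\inj X^o\subset X$''. A curve $\gamma$ closed in $Z$ is closed in $X^o$ but generally not in $X$; its closure in $X$ may meet $X_\sing\subset B$ and may carry additional zeros or poles of the test function, so $\divf(g)_\gamma$ is not literally a generator of $\sR^{LW}_0(X|D;A\cup B)$. This is why the paper takes $T=(X\cap H_1\cap\cdots\cap H_{d-2})_\red$ closed in $X$ and transports relations along a \emph{proper} map. Your closing conceptual remark (that the modulus relations only require regularity of test curves along $D$ inside $X^o$) is correct and is indeed what makes the descent possible, but it does not substitute for these two missing constructions.
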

\begin{proof}
  We can assume $d \ge 3$ by \lemref{lem:Tau-2}.
We fix a locally closed embedding $S_X \inj \P^n_k$.
Let $\nu\colon  C \inj S_X$ be a connected Cartier curve which misses $S_A$ and let 
$f \in \sO^{\times}_{C, E}$, where $E = \nu^*(D)$.
We need to show that $\tau^*_X(\divf(f))$ dies in $\CH^{LW}_0(X|D;A \cup B)$.

We can assume that $C$ is a reduced Cartier curve
with an inclusion $C \inj Y$, where $Y$ is as given by
Lemma~\ref{lem:Cartier-good} and $C$ satisfies the properties (1) - (5) of
\lemref{lem:reduction-basic}.
We shall follow the notations of \lemref{lem:reduction-basic}.

We write $C = (t) \cap Y$, where $t \in H^0(S_X, \sL)$ such that
$\sL$ is a very ample line bundle on $S_X$.
Let $t_{\pm} = \iota^*_{\pm}(t) \in H^0(X, \iota^*_{\pm}(\sL))$ and let
$C_{\pm} = (t_{\pm}) \cap Y = (t_{\pm}) \cap Y_{\pm}$.
Let $\nu_{\pm}\colon  C_\pm \inj X$ denote the inclusions.
It follows from our choice of the section that $C_\pm \subset {X^o}$ are
integral curves, which are Cartier everywhere on $X$ and regular along $D$.
Moreover, $C_\pm \cap (A \cup B) = \emptyset$ and $C_\pm \cap D^\dagger$
are regular.

If $C_+ = C_-$, exactly the same argument as in the
case of surfaces applies to show that 
$\tau^*_X(\divf(f))$ dies in $\CH^{LW}_0(X|D;A \cup B)$.
So we assume $C_+ \neq C_-$. We can also assume $E \neq \emptyset$.

Let $\Delta(C) = C_+ \cup C_-$ denote the scheme theoretic image
in $X$ under the finite map $\Delta$. Then $\Delta(C) \cap (A \cup B) =
\emptyset$.
By applying \lemref{lem:Cartier-higher} (to $X^o$ instead of $X$), we find that
there are general hypersurfaces
$H_1, \ldots , H_{d-2} \subset \P^n_k$ of large enough degrees such that
$T = (X \cap H_1 \cap \cdots \cap H_{d-2})_\red$ has the following 
properties:
\begin{enumerate}
\item
  $T$ is integral and $T \nsubset D$. 
\item
  $T^o = X^o \cap H_1 \cap \cdots \cap H_{d-2}$.
\item
  $\Delta(C) \subset T$.
\item
  $C_\pm$ are Cartier divisors on $T$ along $D$.
\item
  There is a finite closed subset $Z_T \subset \Delta(C)$ (depending on $T$) such that
  $(T \cap X^o) \setminus Z_T$ is regular.
\item
  $T \cap (A \cup B)$ is finite.
\end{enumerate}

It follows from the above properties that $T_\sing \subset (X_\sing \cap T) \cup Z_T$.
In particular, $T_\sing$ is finite. We write $Z = Z_T \cap T_\sing = Z_1 \amalg Z_2$,
where $Z_1 \subset D$ and $Z_2 \cap D = \emptyset$.
Since $Z_2$ is a finite set of closed points, we can find a finite sequence of
blow-ups with centers lying over $Z_2$ such that the composition 
$\pi \colon \wt{T} \to T$ has the properties: 
\begin{enumerate}
\item
  $\pi$ is isomorphism over $T \setminus Z_2$;
\item
  $\wt{T}$ is an integral surface with
  $\wt{T}_\sing \subset \pi^{-1}(T_\sing \setminus Z_2) \cong T_\sing \setminus Z_2
  \subset (B \cap T) \cup Z_1$.
 \end{enumerate}

We let $p \colon \wt{T} \to X$ be the composite map.
We let $\wt{A} = \pi^{-1}((A \cup B) \cap T) \cong
(A \cup B) \cap T, \ \wt{B} = \wt{T}_\sing$ 
 and $\wt{D} = p^{*}(D)$. 
Then $\wt{A}$ and $\wt{B}$ are finite.
In particular, $\wt{T}$ satisfies the hypotheses of \lemref{lem:Tau-2}.
Let $S_{\wt{T}}$ be the double of $\wt{T}$ along $\wt{D}$.
Since $T^o \inj X^o$ is a complete intersection and
$\pi$ is isomorphism away from $\pi^{-1}(Z_2)$, it follows that $p$ is a
proper morphism which is
a complete intersection closed immersion away from $p^{-1}(X_\sing \cup Z_2)$
(in particular, over an open neighborhood of $D^o$).
Since $X_\sing \subset B$, 
it follows that $p((S_{\wt{T}})_\reg \setminus S_{\wt{A}}) \subset (S_X)_\reg \setminus S_A$.
We can therefore apply \lemref{lem:PF} to get a push-forward map
\begin{equation}\label{eqn:Tau>2-0}
  p_* \colon \CH^{LW}_0(S_{\wt{T}};S_{\wt{A}}) \to \CH^{LW}_0(S_X;S_{A}).
\end{equation}

One checks from the above that there is also a push-forward map
\begin{equation}\label{eqn:Tau>2-1}
  p_* \colon \CH^{LW}_0(\wt{T}|\wt{D}; \wt{A} \cup \wt{T}_\sing) \to
      \CH^{LW}_0(X|D;A \cup B).
\end{equation}

We let $\wt{C}_\pm$ be the strict transforms of $C_\pm$ under $\pi$.
      Since the latter is an isomorphism over $D \cap T$, it follows that
      $\wt{C}_+ \cap \wt{D} = \wt{C}_- \cap \wt{D} = E$ and
      $\wt{C} = \wt{C}_+ \amalg_{E} \wt{C}_-$ is a Cartier curve on $S_{\wt{T}}$
      relative to the singular locus of $S_{\wt{T}}$ (see \lemref{lem:Cartier}).
      Furthermore, $f \in k(\wt{C}, E)^{\times}$ and
      $\divf(f)_C = \pi_*(\divf(f)_{\wt{C}})$ (see \cite[Proposition~1.4]{Fulton}).
It follows by \lemref{lem:Tau-2} that $\tau^*_{\wt{T}}(\divf(f)_{\wt{C}}) = 0$ in 
      $\CH^{LW}_0(\wt{T}|\wt{D}; \wt{A} \cup \wt{T}_\sing)$.
 We thus get
      \[
        \tau^*_X(\divf(f)_C) = \tau^*_X \circ p_*(\divf(f)_{\wt{C}}) =
        p_* \circ \tau^*_{\wt{T}}(\divf(f)_{\wt{C}}) = 0
      \]
      in $\CH^{LW}_0(X|D;A \cup B)$. This concludes the proof.
    \end{proof}

   \subsection{A moving lemma for Chow groups with
      modulus}\label{sec:ML}
We shall now give an application of the existence of $\tau^*_X$.
Let $X$ and other notations be as described in the beginning of \S~\ref{sec:PB}.
Recall our notation $Z^\star = Z \cup X_\sing$ for any $Z \subset X$.
We let $p^+_* \colon \sZ_0(X^o \setminus (D \cup A)) \to 
\sZ_0(S_X \setminus (D' \cup S_A))$ be the push-forward map induced by the
inclusion $\iota_+ \colon X_+ \inj S_X$. One can define $p^-_*$ in a
similar way. Following the proof of \cite[Proposition~5.9]{Binda-Krishna}, one 
can deduce that 
$p^\pm_*$ descend to the level of Chow groups and give rise to the push-forward maps
\begin{equation}\label{eqn:PF-0}
  p^\pm_* \colon \CH^{LW}_0(X|D;A^\star) \to \CH^{LW}_0(S_X;S_A).
\end{equation}
Composing with the canonical surjection
$\CH^{LW}_0(X|D; A\cup B) \surj \CH^{LW}_0(X|D;A^\star)$, we get the push-forward maps

\[
  \wt{p}^\pm_* \colon \CH^{LW}_0(X|D; A\cup B) \to \CH^{LW}_0(S_X;S_A).
\]
It is clear that the composite maps
\begin{equation}\label{eqn:PFF-0}
 \CH^{LW}_0(X|D; A\cup B) \xrightarrow{\wt{p}^\pm_*} \CH^{LW}_0(S_X;S_A)
 \xrightarrow{\tau^*_X}  \CH^{LW}_0(X|D; A\cup B)
\end{equation}
are identity.
As an immediate consequence of this, we get the following moving lemma for the
Levine-Weibel Chow group with modulus.

\begin{cor}\label{cor:ML-0}
  The canonical map
  \[
    \CH^{LW}_0(X|D; A\cup B) \to  \CH^{LW}_0(X|D;A^\star)
  \]
  is an isomorphism.
  \end{cor}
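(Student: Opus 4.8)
The plan is to exhibit the canonical map as a surjection that also admits a one-sided inverse, and then conclude it is an isomorphism. Write $q \colon \CH^{LW}_0(X|D; A\cup B) \to \CH^{LW}_0(X|D; A^\star)$ for this map. First I would record that the two groups are built from the \emph{same} underlying group of $0$-cycles. Indeed, $A^\star = A\cup X_\sing \subset A\cup B$ since $X_\sing \subset B$, while on the other hand $B \subset X_\sing \cup D_\red$ forces $B \cap (X^o \setminus (D\cup A)) = \emptyset$, because $X^o \cap X_\sing = \emptyset$ and $(X^o\setminus D) \cap D_\red = \emptyset$. Hence $\sZ_0(X^o \setminus (D\cup A\cup B)) = \sZ_0(X^o \setminus (D\cup A^\star))$, and $q$ is induced by the identity on cycles.

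Next I would check that $q$ is surjective. Since $A^\star \subset A\cup B$, every admissible curve missing $A\cup B$ also misses $A^\star$; consequently every generator of $\sR^{LW}_0(X|D; A\cup B)$ is a generator of $\sR^{LW}_0(X|D; A^\star)$, so $\sR^{LW}_0(X|D; A\cup B) \subset \sR^{LW}_0(X|D; A^\star)$ as subgroups of the common cycle group. Thus $q$ is the evident quotient map and is surjective; this is exactly the canonical surjection used to define $\wt{p}^\pm_*$ in the paragraph preceding the statement.

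The crux is injectivity, and here I would simply read off the retraction already assembled in \eqref{eqn:PFF-0}. By construction $\wt{p}^+_* = p^+_* \circ q$, where $p^+_*$ is the push-forward of \eqref{eqn:PF-0} and $\tau^*_X$ is the difference homomorphism of \propref{prop:Tau>2}. The identity \eqref{eqn:PFF-0} says $\tau^*_X \circ \wt{p}^+_* = \id$, whence $(\tau^*_X \circ p^+_*) \circ q = \id_{\CH^{LW}_0(X|D; A\cup B)}$. Therefore $q$ has a left inverse and is injective. Being both surjective and injective, $q$ is an isomorphism, which is the assertion.

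I expect there to be no genuine obstacle inside the corollary itself: all the weight has been moved into the construction of $\tau^*_X$ (\propref{prop:Tau>2}) and into verifying that \eqref{eqn:PFF-0} is the identity, both of which are available. The only points that require a moment's care are the bookkeeping that the source and target share the same cycle group, so that $q$ is literally a quotient map and hence surjective, and the factorization $\wt{p}^+_* = p^+_* \circ q$, which is precisely what allows the single retraction identity to do double duty by also yielding the injectivity of $q$.
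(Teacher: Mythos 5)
Your proof is correct and is exactly the argument the paper intends: the corollary is stated as an "immediate consequence" of \eqref{eqn:PFF-0}, and the intended reasoning is precisely your factorization $\wt{p}^+_* = p^+_* \circ q$ together with the retraction $\tau^*_X \circ \wt{p}^+_* = \id$, plus the observation that $q$ is a surjection between quotients of the same cycle group. Your bookkeeping (that $B$ meets $X^o\setminus(D\cup A)$ trivially, so the underlying cycle groups coincide and the relation subgroups are nested) is the right way to make the "immediate" step explicit.
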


\subsection{The maps $p^\pm_*$ and $\tau^*_X$ for lci
  Chow groups}\label{sec:lci}
We shall now define the maps $p^\pm_*$ and $\tau^*_X$ between the lci Chow
groups with and without modulus $\CH_0(S_X;S_A)$ and $\CH_0(X|D;A \cup B)$ over
any base field.

\begin{prop}\label{prop:lci-maps}
  Let $k$ be any field. Then the maps $p^\pm_*$ and $\tau^*_X$ between the
  0-cycle groups induce the maps
  \[
    p^\pm_* \colon \CH_0(X|D;A^\star) \to \CH_0(S_X;S_A) \ \ \mbox{and} \ \
    \tau^*_X \colon \CH_0(S_X;S_A) \to \CH^{\rm mod}_0(X|D;A \cup B)
  \]
  such that the composite maps
  \[
    \CH^{\rm mod}_0(X|D;A \cup B) \surj \CH_0(X|D;A^\star) \xrightarrow{p^\pm_*}
    \CH_0(S_X;S_A) \xrightarrow{\tau^*_X} \CH^{\rm mod}_0(X|D;A \cup B)
  \]
  are identity.
\end{prop}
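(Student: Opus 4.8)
The plan is to bootstrap from the Levine--Weibel statements already proved for infinite base fields (\propref{prop:Tau>2} and \eqref{eqn:PFF-0}) in two stages: descend from the Levine--Weibel to the lci Chow groups while $k$ is infinite, and then remove the infinitude hypothesis by a pro-$\ell$ argument. The case $d=1$ is \eqref{eqn:tau^*-1}, so I assume $d\ge 2$. First consider the target of $\tau^*_X$. In the proof of \lemref{lem:reduction-basic} the relations generating the image of $\tau^*_X$ are divisors of functions on integral curves $C_\pm\subset X$ that are Cartier on $X$, regular along $D$, and with $C_\pm\cap D^\dagger$ regular. Passing to the normalization $\nu\colon C_{\pm,n}\to X$, which is an admissible morphism and an isomorphism near $D$, we get $\nu^*(D^\dagger)$ regular and $\divf(f)=\nu_*(\divf(\nu^*f))$; hence each such relation already lies in $\sR^{\rm mod}_0(X|D;A\cup B)$. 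Thus the cycle-level difference map carries $\sR^{LW}_0(S_X;S_A)$ into $\sR^{\rm mod}_0(X|D;A\cup B)$, refining \propref{prop:Tau>2} to a map with target $\CH^{\rm mod}_0(X|D;A\cup B)$.

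To promote the source from $\CH^{LW}_0(S_X;S_A)$ to the lci group $\CH_0(S_X;S_A)$, I use the projective bundle trick of \cite{Binda-Krishna}. It suffices to show that $\tau^*_X(\mu_*(\divf(f)))\in\sR^{\rm mod}_0(X|D;A\cup B)$ for every good curve $\mu\colon\Gamma\to S_X$ and $f\in k(\Gamma,\mu^{-1}((S_X)_\sing))^\times$. Choosing a projective embedding of $\Gamma$ and combining it with $\mu$, realize $\Gamma$ as a closed subscheme of $\P^N_{S_X}$ over $S_X$; over the regular locus one has $\P^N_{S_{X^o}}\cong S_{\P^N_{X^o}}$ by \propref{prop:double*}(4), and the lci-ness of $\mu$ together with \lemref{lem:Cartier-higher} and \propref{prop:Cartier-higher-*} exhibits $\Gamma$ as a Cartier-type curve on the double $S_{\P^N_X}$ of the higher-dimensional scheme $\P^N_X$. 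Applying the already-established difference map $\tau^*_{\P^N_X}$ there (note $\dim\P^N_X\ge 3$) and pushing down along the projection $\P^N_X\to X$, with which the difference map is compatible since $\iota_\pm$ base-changes along $\P^N_X\to X$, yields $\tau^*_X(\mu_*(\divf(f)))\in\sR^{\rm mod}_0(X|D;A\cup B)$. This gives $\tau^*_X\colon\CH_0(S_X;S_A)\to\CH^{\rm mod}_0(X|D;A\cup B)$ for infinite $k$.

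For the push-forward maps, let $\nu\colon C\to X$ be admissible with modulus function $f$, defining a generator of $\sR_0(X|D;A^\star)$; since $\nu(C)\subset X^o$, the morphism $\nu$ is lci, so by \propref{prop:double*}(3)--(4) the self-join $S_\nu\colon S_C=C\amalg_E C\to S_X$ (with $E=\nu^*(D)$) is a good curve on $S_X$. The modulus condition $f|_E=1$ lets $(f,1)$ (resp.\ $(1,f)$) glue to a function in $k(S_C,S_C\cap(S_X)_\sing)^\times$ with $S_{\nu,*}(\divf(f,1))=\iota_{+,*}\nu_*(\divf(f))$, so $p^\pm_*$ carries $\sR_0(X|D;A^\star)$ into $\sR_0(S_X;S_A)$, giving $p^\pm_*\colon\CH_0(X|D;A^\star)\to\CH_0(S_X;S_A)$. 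That $\tau^*_X\circ p^\pm_*$ is the identity is then immediate on 0-cycles, exactly as in \eqref{eqn:PFF-0}, since all four maps are induced by the same maps of free cycle groups. Finally, for finite $k$ of characteristic $p$ and a prime $\ell\neq p$, base change to the infinite pro-$\ell$ extension $k_\ell/k$; as $k$ is perfect this preserves integrality, normality and reducedness, so the infinite-field case holds over $k_\ell$. A generator $r$ of $\sR_0(S_X;S_A)$ satisfies $\tau^*(r)\in\sR^{\rm mod}_0$ already over some finite subextension $k'/k$ of $\ell$-power degree; pushing forward along the finite flat morphism $\pi\colon X_{k'}\to X$, which commutes with the difference maps and sends relations to relations, gives $[k':k]\,\tau^*_X(r)\in\sR^{\rm mod}_0(X|D;A\cup B)$. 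Running this for two distinct primes produces coprime multiples of $\tau^*_X(r)$ lying in the subgroup $\sR^{\rm mod}_0$, whence $\tau^*_X(r)\in\sR^{\rm mod}_0$; the same argument treats $p^\pm_*$.

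The main obstacle is the projective bundle descent of $\tau^*_X$ from $\CH^{LW}_0(S_X;S_A)$ to $\CH_0(S_X;S_A)$: good (finite lci) curves are genuinely more general than embedded Cartier curves, and one must both realize the lifted curve on $S_{\P^N_X}$ as a curve of the Cartier type demanded by \propref{prop:Tau>2} and verify that the difference map is compatible with the bundle projection so that the relation descends correctly. Once this is secured, the refinement of the target to $\CH^{\rm mod}_0$, the construction of $p^\pm_*$ by self-joins, and the pro-$\ell$ descent to finite fields are all routine.
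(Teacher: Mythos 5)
Your proposal is correct and follows essentially the same route as the paper, which handles $p^\pm_*$ by the self-join construction of \cite[Proposition~5.9]{Binda-Krishna}, descends $\tau^*_X$ from \propref{prop:Tau>2} to the lci group via the projective bundle trick of \cite[Lemma~8.3]{Gupta-Krishna-BF} together with push-forward along smooth proper morphisms for $\CH^{\rm mod}_0$, and treats finite fields by the pro-$\ell$ argument of \cite[Theorem~7.1]{Binda-Krishna}; your unpacking of these citations (including the observation that the relations produced by \lemref{lem:Tau-2} normalize to generators of $\sR^{\rm mod}_0$) is sound. The only blemish is the appeal to \lemref{lem:Cartier-higher} and \propref{prop:Cartier-higher-*} to see that $\Gamma$ is Cartier in $\P^N_{S_X}$ --- those results concern joins of curves in $X$, whereas the relevant point is simply that a finite lci morphism factors through a regular closed immersion into a projective bundle, which is by definition a Cartier embedding.
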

\begin{proof}
  The existence of $p^\pm_*$ has the same proof as that of
  \cite[Proposition~5.9]{Binda-Krishna}. If $k$ is infinite, then
  the assertion that $\tau^*_X$ descends to the level of Chow groups,
 is a direct consequence of \propref{prop:Tau>2} and the projective bundle trick of
  \cite[Lemma~8.3]{Gupta-Krishna-BF}, combined with the following two remarks.
  The first is that the cited result is proven when $X$ is regular, but its proof
  only uses that $X$ has singularity in codimension $\ge 2$.
  The second is that for any smooth proper morphism
  $\pi \colon X' \to X$, the push-forward map between the 0-cycle
  groups $\sZ_0(X'^o \setminus \pi^{-1}(D^o \cup A \cup B)) \to \sZ_0(X^o \setminus
  (D^o \cup A \cup B))$ induces the map
  $\pi_* \colon \CH^{\rm mod}_0(X'|D'; \pi^{-1}(A \cup B)) \to
  \CH^{\rm mod}_0(X|D;A \cup B)$, where $D' = \pi^*(D)$.
  
  When $k$ is finite, the proof given in \cite[Theorem~7.1]{Binda-Krishna} when
  $X$ is regular works verbatim (using the above push-forward maps) to
reduce to the case when $k$ is infinite. The rest of the lemma is immediate.
\end{proof}

\begin{remk}\label{remk:Surf-**}
  If $k$ is infinite and $X$ is a surface, then the map
  $\CH^{LW}_0(S_X; S_A) \to \CH_0(S_X; S_A)$ is an isomorphism
  by \cite[Theorem~8.1]{BKS}. It follows therefore from ~\eqref{eqn:PFF-0} and
  \propref{prop:lci-maps} that the map
  $\phi_{X|D} \colon
  \CH^{LW}_0(X|D; A) \to \CH_0(X|D; A)$ is an isomorphism.  This proves the
  promised isomorphism ~\eqref{eqn:Mod-old}.
\end{remk}

\vskip .3cm

As an immediate consequence of \propref{prop:lci-maps}, we get the following 
moving lemma for the Chow group with modulus over arbitrary fields.
This is new even when $X$ is regular.

\begin{cor}\label{cor:mL-lci}
  Let $k$ be any field.
 Then the  canonical map
  \[
    \CH^{\rm mod}_0(X|D; A\cup B) \to  \CH_0(X|D;A^\star)
  \]
  is an isomorphism.
\end{cor}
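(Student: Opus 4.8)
The plan is to derive the isomorphism as a formal consequence of \propref{prop:lci-maps}, so that essentially all the geometric content has already been supplied by the construction of $\tau^*_X$ there. The canonical map in the statement is precisely the surjection $\CH^{\rm mod}_0(X|D; A\cup B) \surj \CH_0(X|D;A^\star)$ that opens the composite in \propref{prop:lci-maps}, and that composite
\[
\CH^{\rm mod}_0(X|D;A \cup B) \to \CH_0(X|D;A^\star) \xrightarrow{p^+_*}
\CH_0(S_X;S_A) \xrightarrow{\tau^*_X} \CH^{\rm mod}_0(X|D;A \cup B)
\]
is the identity. Reading this backwards, $\tau^*_X \circ p^+_*$ is a left inverse to the canonical map, which is therefore (split) injective. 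So the only thing left to verify by hand is surjectivity.

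For surjectivity I would first identify the two Chow groups as quotients of one and the same free abelian group. Indeed $X^o = X_\reg$ is disjoint from $X_\sing$, so adjoining $X_\sing$ to the removed locus changes nothing on $X^o$; and since $B \subset X_\sing \cup D_\red$ we have $B \cap X^o \subseteq D$, so adjoining $B$ removes no further points of $X^o \setminus D$. Hence
\[
X^o \setminus (D \cup A^\star) = X^o \setminus (D \cup A) = X^o \setminus (D \cup A \cup B),
\]
and both $\CH^{\rm mod}_0(X|D; A\cup B)$ and $\CH_0(X|D;A^\star)$ are quotients of $\sZ_0(X^o \setminus (D \cup A))$. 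It then suffices to check the containment of relation subgroups $\sR^{\rm mod}_0(X|D;A\cup B) \subseteq \sR_0(X|D;A^\star)$. This is immediate from the definitions: since $X_\sing \subseteq B$ we have $A^\star \subseteq A \cup B$, so every admissible curve whose image avoids $A \cup B$ also avoids $A^\star$; and passing from the modulus group (with the extra regularity constraint on $\nu^*(D^\dagger)$) to the lci group only enlarges the set of permitted relations. Thus every generator $\nu_*(\divf(f))$ of the source relations is a generator of the target relations, the canonical map is a well-defined quotient map, and in particular it is surjective.

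Putting the two halves together, the canonical map is a surjection admitting a left inverse, hence an isomorphism. I do not expect a genuine obstacle at this stage: the substantive work — descending $\tau^*_X$ to the lci Chow groups over an arbitrary field — is exactly what \propref{prop:lci-maps} already provides (via \propref{prop:Tau>2} when $k$ is infinite, together with the pro-$\ell$-extension reduction when $k$ is finite). The only point requiring mild care is the bookkeeping of supports: one must confirm that the two cycle groups genuinely coincide before comparing relations, which is why I would record the observation that $X_\sing$ and $B$ meet $X^o \setminus D$ trivially before invoking the retraction.
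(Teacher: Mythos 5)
Your proof is correct and is essentially the paper's own argument: the corollary is recorded there as an immediate consequence of \propref{prop:lci-maps}, with injectivity coming from the left inverse $\tau^*_X \circ p^+_*$ supplied by that proposition and surjectivity being the evident containment of relation subgroups (using $X_\sing \subset B$ and $B \cap X^o \subset D$) that you spell out. No gaps.
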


\vskip .3cm

\subsection{Proof of \corref{cor:ML-CH-mod}}
The first part of the corollary follows directly from 
\corref{cor:mL-lci}. 
If we assume further that $k$ is algebraically closed, then
we have the commutative diagram
\begin{equation}\label{eqn:ML-CH-mod-0}
  \xymatrix@C.8pc{
    \CH^{LW}_0(X|D;B) \ar[r] \ar@{->>}[d] & \CH^{LW}_0(S_X) \ar@{->>}[d] \\
    \CH^{\rm mod}_0(X|D;B) \ar[r] & \CH_0(S_X),}
  \end{equation}
  in which the vertical arrows are surjective. We have shown above that
  the horizontal arrows are injective. The right vertical arrow is an
  isomorphism by \cite[Theorem~1.3]{Gupta-Krishna-JAG} (if $X$ is affine) and
  \cite[Theorem~6.6]{Krishna-torsion} (if $X$ is projective). It follows that
  the left vertical arrow is also an isomorphism, which implies our
  assertion.
 $\hfill\square$

\vskip .3cm

\section{The exact sequence and applications}\label{sec:ES}
The goal of this section is to prove \thmref{thm:Main-1} and its applications
to Bloch's formula and moving lemma for singular surfaces.
We let $k$ be a field and $X$ an integral quasi-projective scheme of dimension
$d \ge 1$ which is regular in codimension one. 
Let $D \subset X$ be a closed subscheme such that
$D^o = D \cap X^o$ is a Cartier divisor on $X^o$. We let $A \subset X$ be a
reduced closed subscheme such that $\dim(A) \le d-2$.

\subsection{The pull-back maps $\iota^*_\pm$}\label{sec:PB-1}
We begin by defining the maps $\iota^*_\pm \colon
\CH_0(S_X;S_A) \to \CH_0(X, D;A)$. At the level of the cycle groups,
$\iota^*_\pm \colon \sZ_0((S_X)_\reg \setminus S_A) \to
\sZ_0(X^o \setminus (D \cup A))$ are simply the projection maps
\begin{equation}\label{eqn:Proj}
  \sZ_0((S_X)_\reg \setminus S_A) = \sZ_0(X^o_+ \setminus (D \cup A))
  \oplus \sZ_0(X^o_- \setminus (D \cup A)) \to
  \sZ_0(X^o \setminus (D \cup A))
\end{equation}
onto the two factors of the direct sum.

\begin{lem}\label{lem:Proj-0}
  The maps $\iota^*_\pm$ induce homomorphisms
  \[
\iota^*_\pm \colon \CH_0(S_X;S_A) \to \CH_0(X, D;A).
\]
\end{lem}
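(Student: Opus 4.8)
The plan is to show that each projection $\iota^*_\pm$ carries the subgroup of rational equivalences $\sR_0(S_X;S_A)$ into $\sR_0(X,D;A)$; by the symmetry interchanging the two copies of $X$ it suffices to treat $\iota^*_+$. A generator of $\sR_0(S_X;S_A)$ has the form $\nu_*(\divf(f))$ for a good curve $\nu\colon C\to S_X$ relative to $(S_X)_\sing=D'$ (so $C$ is reduced, $\nu$ is finite lci, $\nu^{-1}(D')$ is nowhere dense, and $\nu(C)$ misses $S_A$), together with $f\in k(C,\nu^{-1}(D'))^\times$. First I would record the cycle-level identity that makes the statement plausible: the composite $\Delta\circ\nu\colon C\to X$ is again finite lci, and since $\Delta^{-1}(D^\star)=D'$ it is a good curve relative to $D$ with $f\in k(C,(\Delta\nu)^{-1}(D^\star))^\times$; as $\Delta\circ\iota_\pm=\id_X$ one has $\Delta_*=\iota^*_++\iota^*_-$ on $\sZ_0((S_X)_\reg\setminus S_A)$, whence $(\iota^*_++\iota^*_-)(\nu_*\divf(f))=(\Delta\nu)_*(\divf(f))\in\sR_0(X,D;A)$. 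Combined with the descent of the difference map $\tau^*_X=\iota^*_+-\iota^*_-$ (\propref{prop:lci-maps}) this serves as a consistency check, but since it controls only the sum and the difference it does not by itself yield descent of the individual $\iota^*_\pm$ (one would need to divide by $2$). So the individual maps require a direct argument.

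For that direct argument I would decompose $C$. Since $(S_X)_\reg=X^o_+\sqcup X^o_-$ and $\nu^{-1}(D')$ is nowhere dense, every irreducible component of $C$ maps generically into $X_+$ or into $X_-$; let $C_+$ (resp. $C_-$) be the reduced union of the components of the first (resp. second) type, so that $C=C_+\cup C_-$ with $C_+\cap C_-\subset\nu^{-1}(D)$. Because $f$ is a unit near $\nu^{-1}(D')$, the cycle $\divf(f)$ is supported away from $\nu^{-1}(D')$, and the $+$-components (resp. $-$-components) contribute points of $X^o_+$ (resp. $X^o_-$). Projecting onto the first factor therefore gives the cycle-level identity $\iota^*_+(\nu_*\divf(f))=(\nu|_{C_+})_*(\divf(f|_{C_+}))$ in $\sZ_0(X^o\setminus(D\cup A))$, where $f|_{C_+}\in k(C_+,C_+\cap\nu^{-1}(D'))^\times$ is the restriction of $f$ to the $+$-components. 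It remains to interpret the right-hand side as an element of $\sR_0(X,D;A)$, i.e. to exhibit $\nu|_{C_+}\colon C_+\to X$ as a good curve relative to $D$ carrying $f|_{C_+}$.

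The \textbf{main obstacle} is precisely the lci property of this restriction: $\nu|_{C_+}$ is finite with reduced source, misses $A$, and satisfies the nowhere-density condition, but the restriction of a finite lci morphism to a union of irreducible components need not be lci at the points of $C_+\cap C_-$ lying over $D$ (equivalently, $C_+\hookrightarrow C$ need not be a regular immersion there). One cannot circumvent this by redefining $f$ to be $1$ on $C_-$, since the compatibility $(f_+)|_E=(f_-)|_E$ forced by the exact sequence $0\to\sO_{C,E}\to\sO_{C_+,E}\times\sO_{C_-,E}\to\sO_E\to 0$ (with $E=\nu^*(D)$) obstructs such a splitting. To resolve it I would, assuming first that $k$ is infinite, reduce to the special Cartier curves produced by the analysis of the double in \S~\ref{sec:CC}: by \lemref{lem:reduction-basic} (via \lemref{lem:Cartier-good}) the relevant relations are generated by curves of the shape $C=C_+\amalg_E C_-$ in which $C_\pm$ are \emph{integral curves that are Cartier everywhere on $X$ and regular along $D$}. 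For such $C_\pm$ the inclusion $C_\pm\hookrightarrow X$ is itself an lci (Cartier) immersion, so $\nu|_{C_\pm}\colon C_\pm\to X$ is automatically a good curve relative to $D$ with $f|_{C_\pm}\in k(C_\pm,C_\pm\cap D^\star)^\times$; hence $(\nu|_{C_+})_*(\divf(f|_{C_+}))\in\sR_0(X,D;A)$, as required.

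Finally I would promote this from the Cartier-curve (Levine--Weibel) level to the lci groups and to an arbitrary base field exactly as in \propref{prop:lci-maps}: the passage from $\CH^{LW}_0$ to $\CH_0$ is handled by the projective-bundle trick of \cite[Lemma~8.3]{Gupta-Krishna-BF} (which uses only that $X$ is regular in codimension one), while the reduction from a finite field to an infinite one is handled by the pro-$\ell$-extension argument of \cite[Theorem~7.1]{Binda-Krishna}. Granting these two reductions, the identity $\iota^*_+(\nu_*\divf(f))=(\nu|_{C_+})_*(\divf(f|_{C_+}))$ shows that $\iota^*_+$---and symmetrically $\iota^*_-$---descends to the asserted homomorphism $\CH_0(S_X;S_A)\to\CH_0(X,D;A)$.
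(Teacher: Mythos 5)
Your proposal is correct and follows essentially the same route as the paper: reduce to the Levine--Weibel groups over an infinite field via the projective-bundle and pro-$\ell$ tricks of \propref{prop:lci-maps}, then use \lemref{lem:Cartier-good} and \lemref{lem:reduction-basic} to replace an arbitrary Cartier curve on $S_X$ by one of the form $C_+\amalg_E C_-$ with $C_\pm$ integral, Cartier everywhere on $X$ and regular along $D$, so that $\iota^*_+(\divf(f))=\divf(f_+)$ visibly lies in $\sR^{LW}_0(X,D;A)$. Your identification of the lci obstruction at the points of $C_+\cap C_-$ over $D$ as the crux, and its resolution by the special shape of the reduced curves, is exactly the point the paper's proof relies on.
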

\begin{proof}
 By repeating the argument for the construction of $\tau^*_X$ in
  \propref{prop:lci-maps}, using easy variants of \cite[Proposition~6.1]{Binda-Krishna}
and   \cite[Lemma~8.3]{Gupta-Krishna-BF},
  we at once reduce to showing that $\iota^*_\pm$ induce homomorphisms
\begin{equation}\label{eqn:Proj-1}
 \iota^*_\pm \colon \CH^{LW}_0(S_X;S_A) \to \CH^{LW}_0(X, D;A)
\end{equation}
when $k$ is infinite. We show this for $\iota^*_+$ as the other case is identical.

We let $D' = D_\red \cup (X_\sing)_+ \cup (X_\sing)_- \subset S_X$.
Let $C \subset S_X$ be a Cartier curve missing $S_{A}$
and let $f \in k(C, C \cap D')^{\times}$.
By Lemmas~\ref{lem:Cartier-good} and ~\ref{lem:reduction-basic}, we can assume that
there is a reduced surface $Y \subset S_X$ containing $C$ such
that the following hold:
\begin{enumerate}
\item
  $Y \cap S_{X^o}$ is a complete intersection in $S_{X^o}$.
\item
  $Y_\pm$ are integral surfaces in $X_\pm$ not contained in $D$.
\item
  $C \subset Y$ is a reduced Cartier divisor.
  \item
    $C \cap S_{A^\star} = \emptyset$.
  \item
    $C \cap X_\pm = C_\pm$ are integral curves which are Cartier everywhere on $X_\pm$
    and are regular along $D$.
    \end{enumerate}

If we let $f = (f_+, f_-) \in k(C, C \cap D')^{\times}$, then
    $\iota^*_+(\divf(f)) = \divf(f_+)$.
    On the other hand, $C_+$ is Cartier along $D$ and misses $A^\star$
    by (4) and (5). Moreover, $f_+ \in k(C_+, C_+ \cap D^\star)^{\times}$ since
    $f \in k(C, C \cap D')^{\times}$. It follows that $\iota^*_+(\divf(f))$
    dies in $\CH^{LW}_0(X, D;A)$.
    This concludes the proof.
\end{proof}

\subsection{The map $\Delta^*_X$}\label{sec:Delta*}
We shall now construct $\Delta^*_X \colon \CH_0(X, D;A) \to
  \CH_0(S_X;S_A)$ which will be a right inverse of $\iota^*_\pm$.

  At the level of the cycle groups,
  $\Delta^*_X$ is the diagonal map
  \begin{equation}\label{eqn:Delta-0}
  \sZ_0(X^o \setminus (D \cup A)) \xrightarrow{\Delta^*_X}
  \sZ_0(X^o_+ \setminus (D \cup A)) \oplus \sZ_0(X^o_- \setminus (D \cup A)) =
  \sZ_0((S_X)_\reg \setminus S_A).
  \end{equation}

\begin{lem}\label{lem:Delta-1}
    The map $\Delta^*_X$ descends to a homomorphism
    \[
      \Delta^*_X \colon \CH_0(X,D;A) \to \CH_0(S_X;S_A).
    \]
  \end{lem}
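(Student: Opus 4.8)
The plan is to show that the diagonal map $\Delta^*_X$ carries the subgroup $\sR_0(X,D;A)$ of rational equivalences into $\sR_0(S_X;S_A)$, so that it descends to the quotient. By the definition of $\CH_0(X,D;A)$ as the lci Chow group relative to $D^\star = D \cup X_\sing$, the group $\sR_0(X,D;A)$ is generated by cycles $\nu_*(\divf(f))$, where $\nu \colon C \to X$ is a good curve relative to $D$ (hence a finite lci morphism with $\nu^{-1}(D^\star)$ nowhere dense) whose image misses $A$, and $f \in k(C, \nu^{-1}(D^\star))^{\times}$. It therefore suffices to prove that for each such generator, $\Delta^*_X(\nu_*(\divf(f)))$ lies in $\sR_0(S_X;S_A)$.

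First I would construct, from the good curve $\nu \colon C \to X$, a corresponding good curve on the double. The natural candidate is the join $S_C = C \amalg_{\nu^*(D)} C$ together with the induced finite morphism $S_C \to S_X$ obtained functorially from $\nu$ via Proposition~\ref{prop:double*}(4) (applied to $\nu$, noting that $\nu^*(D)$ is a Cartier divisor on $C$ since $C$ is a good curve and $D^o$ is Cartier, so $S_C \cong C \times_X S_X$). The finiteness and lci properties of $C \to X$ along $D$, combined with Proposition~\ref{prop:double*}(3), should guarantee that the induced map $S_C \to S_X$ is again a good curve relative to $(S_X)_\sing = D'$, whose image misses $S_A$ because $\nu(C)$ misses $A$. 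This is the structural heart of the argument.

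Next I would track the function. A rational function $f \in k(C, \nu^{-1}(D^\star))^{\times}$ pulls back to the pair $(f,f)$ on the two copies of $C$ glued along $\nu^*(D)$; because the two components agree identically along the gluing divisor, the exact sequence analogous to \eqref{eqn:D*} (applied to $S_C$) shows that $(f,f)$ patches to a well-defined element $\wt f \in k(S_C, S_C \cap D')^{\times}$. One then checks, using the compatibility $\Delta \circ \iota_\pm = \id$ of the finite map $\Delta \colon S_X \to X$, that the pushforward of $\divf(\wt f)$ under $S_C \to S_X$ projects under $\iota^*_\pm$ to exactly $\nu_*(\divf(f))$ in each factor; equivalently, that this pushforward equals $\Delta^*_X(\nu_*(\divf(f)))$ in $\sZ_0((S_X)_\reg \setminus S_A)$. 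Hence $\Delta^*_X(\nu_*(\divf(f))) = (S_C \to S_X)_*(\divf(\wt f)) \in \sR_0(S_X;S_A)$, as desired.

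The main obstacle I anticipate is verifying that the induced map $S_C \to S_X$ genuinely satisfies the lci and finiteness hypotheses defining a good curve on $S_X$ \emph{uniformly}, rather than merely away from the gluing locus. Away from $D$ the two copies of $C$ map in disjointly and the good-curve property is inherited from $\nu$; the delicate point is precisely along $\nu^*(D)$, where the lci condition for $S_C \to S_X$ must be deduced from the lci condition for $\nu$ together with the behaviour of the double under base change. Here Proposition~\ref{prop:double*}(3) is the essential input: since $\nu$ is lci along $D$, the base change $C \times_X S_X \to S_X$ is lci along $D$, and identifying $C \times_X S_X$ with $S_C$ via part (4) closes the gap. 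I would devote the bulk of the verification to this point, the remaining bookkeeping (misses $S_A$, $\wt f$ regular invertible along $D'$, divisor compatibility) being routine.
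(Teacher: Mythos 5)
Your core idea --- form the join $S_C = C\amalg_{\nu^*(D)}C$, glue $f$ with itself to a function $\wt{f}$ on $S_C$, and push forward --- is exactly the mechanism the paper uses. But there is a genuine gap at the step you yourself identify as the heart of the argument. Proposition~\ref{prop:double*}, including parts (3) and (4), is stated under the hypothesis that $D$ is a Cartier divisor on all of $X$. In the setting of Lemma~\ref{lem:Delta-1} one only knows that $D^o = D\cap X^o$ is Cartier on the regular locus; nothing is assumed about $D$ along $X_\sing$. A good curve $\nu\colon C\to X$ relative to $D$ is only required to have $\nu^{-1}(D^\star)$ nowhere dense, so it may meet $D$ at points lying over $X_\sing\cap D$. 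At such points $D$ need not be locally principal, so $\nu^*(D)$ need not be a Cartier divisor on $C$ (an ideal of finite colength in a one-dimensional reduced local ring need not be principal), $\Delta\colon S_X\to X$ need not be flat there, and neither the identification $S_C\cong C\times_X S_X$ nor the lci property of $C\times_X S_X\to S_X$ along $D$ follows; regular immersions are not preserved by non-flat base change. Hence $S_C\to S_X$ is not visibly a good curve on $S_X$, and the generator $\nu_*(\divf(f))$ is not shown to land in $\sR_0(S_X;S_A)$.

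Repairing this requires moving the curve off $X_\sing$ before forming the join, and that is where the real work of the paper's proof lies: it first reduces to an infinite base field, then reduces the lci statement to its Levine--Weibel analogue by the projective bundle trick of Lemma~\ref{lem:Proj-bundle-trick}, and then uses the Bertini-type modifications of Lemma~\ref{lem:reduction-basic} to replace an arbitrary Cartier curve by one lying on a nice surface with $C\cap A^\star=\emptyset$ (in particular $C\subset X^o$, where $D$ is honestly Cartier) and Cartier everywhere on $X$; only then does Lemma~\ref{lem:Cartier} show that $C\amalg_E C$ is a Cartier curve on $S_X$. Your direct construction would be a clean, field-independent shortcut if $D$ were Cartier on all of $X$, but in the stated generality the moving step you omit is not routine bookkeeping --- it is the essential content of the lemma.
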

  \begin{proof}
We can reduce the proof of the lemma to the case of infinite base field
    using \cite[Proposition~6.1]{Binda-Krishna}. Note that the
    latter was stated when $A = \emptyset = D$ but its proof did not use these
    conditions.

    Assume now that $k$ is infinite. In this case, it suffices to
    show using \lemref{lem:Proj-bundle-trick} that $\Delta^*_X$
  descends to a homomorphism
  \begin{equation}\label{eqn:Delta-2}
    \Delta^*_X \colon \CH^{LW}_0(X,D;A) \to \CH^{LW}_0(S_X;S_A).
  \end{equation}

Let $C \subset X$ now be a Cartier curve relative to $D$ which misses $A$ and let
  $f \in k(C, C \cap D^\star)^{\times}$. By repeating the proof of
  \lemref{lem:reduction-basic} without using double
  (see \cite[Sublemma~1]{Biswas-Srinivas}),
  we can modify $C$ so as to assume that there is an integral surface
  $Y \subset X$ containing $C$ and not contained in $D$ such
that the following hold:
\begin{enumerate}
\item
  $Y \cap {X^o}$ is a complete intersection in ${X^o}$.
\item
  $C \subset Y$ is an integral Cartier divisor.
\item
  $Y \cap A^\star$ is finite.
  \item
    $C \cap A^\star = \emptyset$.
  \end{enumerate}

If follows from the above properties that $C$ is a Cartier curve everywhere
  on $X$. We let $C' = C \amalg_E C = S_C$, where $E := C \cap D$.
  Then $C' \subset S_X$ is a Cartier curve everywhere and  does not meet $S_{A^\star}$
  by \lemref{lem:Cartier}. Using the exact sequence (see ~\eqref{eqn:D*})
  \[
    0 \to \sO^{\times}_{C', C' \cap D} \to
    \sO^{\times}_{C, C \cap D} \times \sO^{\times}_{C, C \cap D} \to
    \sO^{\times}_{E} \to 0
    \]
    and that $C' \cap D' = C' \cap D$, we also get 
    $h = (f,f) \in k(C', C' \cap D')^{\times}$.
    Since $\Delta^*_X(\divf(f)) = \divf(h)$, we conclude that
    $\Delta^*_X(\divf(f))$ dies in $\CH^{LW}_0(S_X;S_A)$. This finishes the proof.
\end{proof}

\begin{lem}\label{lem:Proj-bundle-trick}
Let $k$ be an infinite field. Suppose that $\Delta^*_X$ descends to a homomorphism
    \[
      \Delta^*_X \colon \CH^{LW}_0(X,D;A) \to \CH^{LW}_0(S_X;S_A).
    \]
    for every $X \in \Sch_k$ satisfying the hypotheses of \lemref{lem:Delta-1}.
    Then $\Delta^*_X$ descends to a homomorphism
    \[
      \Delta^*_X \colon \CH_0(X,D;A) \to \CH_0(S_X;S_A).
    \]
    
  \end{lem}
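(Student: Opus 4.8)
The plan is to exploit the hypothesis for the \emph{auxiliary} scheme $P = \P^n_X := X \times_k \P^n_k$ and to push the resulting relations down to $X$ along the projective bundle projection $\pi \colon P \to X$, thereby converting the lci (good-curve) relations on $X$ into Cartier (Levine--Weibel) relations on $P$. This is exactly the mechanism of the projective bundle trick of \cite[Lemma~8.3]{Gupta-Krishna-BF}; I indicate how it applies. First I would record that $P$ again satisfies the hypotheses of \lemref{lem:Delta-1}: it is integral and quasi-projective, its singular locus is $X_\sing \times \P^n_k$ (so $P$ is $R_1$), and setting $D_P = \pi^*(D)$ and $A_P = \pi^{-1}(A)$ one has that $D_P^o$ is a Cartier divisor on $P^o = X^o \times \P^n_k$ and $\dim(A_P) \le (d-2)+n = \dim(P) - 2$. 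By the hypothesis of the lemma, $\Delta^*_P$ therefore descends to a homomorphism $\CH^{LW}_0(P, D_P; A_P) \to \CH^{LW}_0(S_P; S_{A_P})$. Moreover, by \propref{prop:double*}(4) the double $S_P$ is canonically the projective bundle $\P^n_{S_X}$ over $S_X$, so $\pi$ induces a smooth proper morphism $\ov{\pi} \colon S_P \to S_X$ restricting to $\pi$ on each copy $P_\pm$.

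Since there is a canonical surjection $\CH^{LW}_0(X, D; A) \surj \CH_0(X, D; A)$, it suffices to show that the composite $\CH^{LW}_0(X, D; A) \xrightarrow{\Delta^*_X} \CH^{LW}_0(S_X; S_A) \surj \CH_0(S_X; S_A)$ annihilates the kernel of this surjection, which is generated by the classes of $\nu_*(\divf(f))$ for good curves $\nu \colon C \to X$ relative to $D$ missing $A$, with $f \in k(C, \nu^{-1}(D^\star))^{\times}$. Because $\nu$ is finite, I would embed $C$ as a closed subscheme $j \colon C \inj P$ over $X$, so that $\pi \circ j = \nu$; since $\nu$ is lci and $\pi$ is smooth, $j$ is a regular embedding, whence $C$ is an honest Cartier curve on $P$ relative to $D_P$ which misses $A_P$. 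One checks that $C \cap D_P^\star = \nu^{-1}(D^\star)$, so $f \in k(C, C \cap D_P^\star)^{\times}$ and hence $\divf(f)_C$, viewed on $C \subset P$, lies in $\sR^{LW}_0(P, D_P; A_P)$. Applying the descended $\Delta^*_P$ and then the surjection to the lci group gives that the class of $\Delta^*_P(\divf(f)_C)$ is zero in $\CH_0(S_P; S_{A_P})$.

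Finally I would push forward along $\ov{\pi}$. As $\ov{\pi}$ is smooth and proper, it induces $\ov{\pi}_* \colon \CH_0(S_P; S_{A_P}) \to \CH_0(S_X; S_A)$, the push-forward for lci Chow groups already used in the proof of \propref{prop:lci-maps}. At the level of cycles the diagonal commutes with projection, $\ov{\pi}_* \circ \Delta^*_P = \Delta^*_X \circ \pi_*$, and $\pi_*(\divf(f)_C) = \nu_*(\divf(f))$ by compatibility of divisors with proper push-forward \cite[Proposition~1.4]{Fulton}. Combining these,
\[
\Delta^*_X(\nu_*(\divf(f))) = \ov{\pi}_*(\Delta^*_P(\divf(f)_C)) = 0
\]
in $\CH_0(S_X; S_A)$, which is precisely the vanishing needed to descend $\Delta^*_X$ to $\CH_0(X,D;A)$.

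The main obstacle is the middle step: producing the closed embedding $j \colon C \inj P$ and verifying that it is a regular embedding, so that the good (merely lci) curve $C$ becomes a genuine Cartier curve on $P$ relative to $D_P$ satisfying all the conditions in the definition of $\CH^{LW}_0(P,D_P;A_P)$ — this is exactly what allows the Levine--Weibel hypothesis to do the work. Alongside this one must confirm that $\ov{\pi}_*$ is defined on the lci Chow groups of the doubles and is compatible with the diagonal maps, both of which follow from the smoothness and properness of the projective bundle projection as in \propref{prop:lci-maps} and \cite[Lemma~8.3]{Gupta-Krishna-BF}.
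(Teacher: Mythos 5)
Your proposal is correct and follows essentially the same route as the paper: the paper's proof is a one-line reference to the projective bundle trick of \cite[Lemma~8.3]{Gupta-Krishna-BF} together with \cite[Proposition~3.18]{Binda-Krishna}, and your argument is precisely an unpacking of that trick (factor a good curve through a regular closed embedding into $\P^n_X$, apply the Levine--Weibel-level hypothesis there using $S_{\P^n_X}\cong \P^n_{S_X}$, and push forward along the bundle projection). The points you flag as obstacles --- the regularity of the embedding $j\colon C\inj \P^n_X$ coming from $\nu$ lci and $\pi$ smooth, and the existence of $\ov{\pi}_*$ compatible with the diagonal maps --- are exactly the content of the cited results, so nothing is missing.
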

  \begin{proof}
    Using \cite[Proposition~3.18]{Binda-Krishna}, the lemma is easily proven by
    mimicking the argument of the proof of \cite[Lemma~8.3]{Gupta-Krishna-BF}
    mutatis mutandis.
\end{proof}

\subsection{Proof of \thmref{thm:Main-1}}\label{sec:Main-1-pf}
We now prove \thmref{thm:Main-1}.
Let $(X,D)$ be as in the theorem and let $A \subset X$ be any reduced
closed subscheme such that $\dim(A) \le d-2$.
We show more generally that there is a split exact sequence
 \begin{equation}\label{eqn:Main-1-0}
    0 \to \CH_0(X|D;A^\star) \xrightarrow{p^+_*} \CH_0(S_X;S_A)
    \xrightarrow{\iota^*_-} \CH_0(X, D;A) \to 0.
  \end{equation} 
Taking $A = \emptyset$ proves \thmref{thm:Main-1}.
  Now, it is clear from the definitions of $p^\pm_*$ and $\iota^*_\pm$ that
~\eqref{eqn:Main-1-0} is a complex and $\iota^*_-$ is surjective.
  It follows from \propref{prop:lci-maps} that $p^+_*$ is split injective.
  It remains to show that ~\eqref{eqn:Main-1-0} is exact in the middle.
  But this easily follows from the fact that $\iota^*_\pm \circ \Delta^*_X$
  are the identity maps (see the proof of \cite[Theorem~7.1]{Binda-Krishna}).
$\hfill\square$

\subsection{Bloch's formula and moving lemma}\label{sec:BF*}
Let $k$ be any field and $X$ an integral quasi-projective surface over $k$
which is regular in codimension one. Let $D \subset X$ be a closed subscheme
such that $D^o = D \cap X^o$ is a Cartier divisor on $X^o$.
Let $\sK^M_{i, (X,D)}$ denote the kernel of the restriction map
$\sK^M_{i,X} \surj \sK^M_{i,D}$ of Zariski (and Nisnevich)
sheaves of Milnor $K$-groups on $X$.
By \cite[Lemma~3.2]{Gupta-Krishna-JAG} (see also \cite[Lemma~3.7]{Gupta-Krishna-REC}),
there is a cycle class map
$\cyc_{X} \colon \sZ_0(X^o \setminus D) \to H^2_\zar(X, \sK^M_{2,X})$ such that
the composite map
\begin{equation}\label{eqn:Milnor-0}
\sZ_0(X^o \setminus D) \xrightarrow{\cyc_{X}} H^2_\zar(X, \sK^M_{2,X})
\to H^2_\zar(X, \sK_{2,X}) \to K_0(X)
\end{equation}
takes a closed point $x \in X^o$ to the class $[\sO_{\{x\}}] \in K_0(X)$.
Here, $\sK_{i,X}$ is the Zariski sheaf of Quillen $K$-groups and
$H^2_\zar(X, \sK_{2,X}) \to K_0(X)$ is the edge map in the Thomason-Trobaugh
descent spectral sequence for algebraic $K$-theory (see \cite[Theorem~10.3]{TT}).
By \cite[Theorem~3.6]{BKS}, $\cyc_{X}$ factors through the map
\begin{equation}\label{eqn:Milnor-1}
  \cyc_{X} \colon \CH_0(X,D) \to H^2_\zar(X, \sK^M_{2,X}).
\end{equation}

We now look at the commutative diagram
\begin{equation}\label{eqn:Milnor-2}
  \xymatrix@C.8pc{
    \CH_0(X,D) \ar[r]^-{\cyc_{X}} \ar[d]_-{\Delta^*_X} &    H^2_\zar(X, \sK^M_{2,X})
      \ar[d]^-{\Delta^*_X} \\
      \CH_0(S_X) \ar[r]^-{\cyc_{S_X}} &  H^2_\zar(S_X, \sK^M_{2,S_X}).}
  \end{equation}

We saw in the proof of \thmref{thm:Main-1} that $\Delta^*_X$ on the left is
  injective. The map $\cyc_{S_X}$ is an isomorphism by
  \cite[Theorem~8.1]{BKS}. It follows that $\cyc_{X}$ is injective.
  This map is surjective by using \cite[Theorem~2.5]{Kato-Saito-86}
  in combination with \cite[\S~3.5]{Gupta-Krishna-REC}.
  It follows that $\cyc_X$ is an isomorphism. This proves the first isomorphism
  of \thmref{thm:Main-3}.
 $\hfill\square$

\vskip .3cm
  
 \begin{remk}\label{remk:Top}
  We remark that the above proof implies using \cite[Lemma~3.3]{BKS} that
  the change of topology map $H^2_\zar(X, \sK^M_{2,X}) \to H^2_\nis(X, \sK^M_{2,X})$
  is an isomorphism.
\end{remk}

\vskip .3cm

We now show that the cycle class map
  $\cyc_{X|D} \colon \sZ_0(X^o \setminus D) \to H^2_\zar(X, \sK^M_{2,(X,D)})$
  induces an isomorphism
  \begin{equation}\label{eqn:Milnor-3}
    \cyc_{X|D} \colon \CH_0(X|D) \xrightarrow{\cong} H^2_\zar(X, \sK^M_{2, (X,D)}).
  \end{equation}
  
    To this end, we look at the commutative diagram
    \begin{equation}\label{eqn:Milnor-4}
      \xymatrix@C.8pc{
        0 \ar[r] & \CH_0(X|D) \ar[r]^-{p^+_*} \ar@{.>}[d] &
        \CH_0(S_X) \ar[r]^-{\iota^*_-} \ar[d]^-{\cyc_{S_X}} &
        \CH_0(X,D) \ar[d]^-{\cyc_X} \ar[r] & 0 \\
        0 \ar[r] & H^2_\zar(S_X, \sK^M_{2, (S_X, X_-)}) \ar[r] &
        H^2_\zar(S_X, \sK^M_{2,S_X}) \ar[r]^-{\iota^*_-} & H^2_\zar(X, \sK^M_{2,X})
        \ar[r] & 0,}
      \end{equation}
      where the top sequence comes from \thmref{thm:Main-1} and the
      bottom sequence is split by the pull-back
      $\Delta^*_X \colon H^2_\zar(X, \sK^M_{2,X}) \to  H^2_\zar(S_X, \sK^M_{2,S_X})$.
      The map $\cyc_{S_X}$ is an isomorphism by \cite[Theorem~8.1]{BKS} and we showed
      above that $\cyc_X$ is also an isomorphism. It follows that $\cyc_{S_X}$
      induces an isomorphism
      \begin{equation}\label{eqn:Milnor-5}
        \cyc_{(S_X, X_-)} \colon \CH_0(X|D) \xrightarrow{\cong}
        H^2_\zar(S_X, \sK^M_{2, (S_X, X_-)}).
        \end{equation}

        It is straightforward to check that the composite map
        \[
    \sZ_0(X^o \setminus D)  \xrightarrow{\cyc_{(S_X, X_-)}}      
    H^2_\zar(S_X, \sK^M_{2, (S_X, X_-)}) \xrightarrow{\iota^*_+} H^2_\zar(X, \sK^M_{2, (X,D)})
  \]
  coincides with $\cyc_{X|D}$ of ~\eqref{eqn:Milnor-3}.
  To finish the proof therefore, we only have to show that the map
 \begin{equation}\label{eqn:Milnor-6} 
  \iota^*_+ \colon  H^2_\zar(S_X, \sK^M_{2, (S_X, X_-)})  \to 
  H^2_\zar(X, \sK^M_{2, (X,D)})
\end{equation}
is an isomorphism.
But this is proven in \cite[Lemma~13.3]{Binda-Krishna}, where we remark that
the latter result made an assumption that $X \in \Sm_k$ which was never used
in the proof.
This concludes the proof of the second isomorphism of \thmref{thm:Main-3}.
$\hfill\square$

\vskip .3cm

\begin{remk}\label{remk:Top-0}
  Using Remark~\ref{remk:Top} and the above proof, one easily checks that the
change of topology map
$H^2_\zar(X, \sK^M_{2,(X,D)}) \to H^2_\nis(X, \sK^M_{2,(X,D)})$ is also an isomorphism.
\end{remk}

\subsection{Proof of \corref{cor:Main-3-0}}\label{sec:ML-lci*}
We shall now prove \corref{cor:Main-3-0}. In the proof, we shall
show that part (3) of the corollary also holds if $k$ is algebraically closed
and $X$ is projective\footnote{This uses \cite{Levine-unpub} which is yet unpublished.
  One should therefore treat this addendum to the corollary as a conditional result.}.

We first assume that $X$ is a surface
over any field and note that $cyc_X$ has the factorization
\[
  \CH_0(X,D) \to \CH_0(X) \xrightarrow{\cyc_X} H^2_\zar(X, \sK^M_{2,X}).
  \]
  The second arrow is an isomorphism by \cite[Theorem~8.1]{BKS}
  and the composite arrow is the first isomorphism of \thmref{thm:Main-3}.
  The first arrow therefore is also an isomorphism.

  Suppose now that $k$ is algebraically closed and $d \ge 2$. We have a commutative diagram
  \begin{equation}\label{eqn:Main-3-0-0}
    \xymatrix@C.8pc{
      \CH^{LW}_0(X,D) \ar[r] \ar@{->>}[d] & \CH^{LW}_0(X) \ar@{->>}[d] \\
      \CH_0(X,D) \ar[r] & \CH_0(X).}
  \end{equation}
  The right vertical arrow is an isomorphism by \cite[Theorem~4.4]{Gupta-Krishna-JAG}
  (if $X$ is projective) and \cite[Corollary~7.6]{Krishna-Invent} (if $X$ is affine).
  It follows easily from \cite[Lemma~2.2]{Ghosh-Krishna-CFT} (see the proof of
  Lemma~2.3 of op. cit.) that the  top horizontal arrow is surjective.
  It suffices therefore to show that this arrow is injective too.

If $X$ is affine, then we have a factorization
  \[
    \CH^{LW}_0(X,D) \surj \CH^{LW}_0(X) \xrightarrow{\cyc_X} H^d_\nis(X, \sK^M_{d,X}),
  \]
  where the second arrow exists by \cite[Theorem~1.1]{Gupta-Krishna-JAG}.
  The second and the composite arrows can be shown to be isomorphisms
  by the same proof as in the surface case using
  \cite[Theorem~1.1]{Gupta-Krishna-JAG} instead of
  \cite[Theorem~8.1]{BKS}. The desired conclusion then follows.

  If $X$ is an integral projective scheme
  of dimension $d \ge 2$ over $k$ which is regular in codimension one, then
  we have a commutative diagram
  \[
   \xymatrix@C.8pc{
      \CH^{LW}_0(X,D) \ar[r] \ar[d]_-{\Delta^*_X} & K_0(X) \ar[d]^-{\Delta^*_X} \\
      \CH^{LW}_0(S_X) \ar[r] & K_0(S_X).}
  \]
  The left vertical arrow is injective by definition (see \secref{sec:Delta*}). 
Levine \cite{Levine-unpub}
  showed that the bottom horizontal  arrow is injective with rational coefficients.
  It follows that $\CH^{LW}_0(X,D) \to \CH^{LW}_0(X)$ has torsion kernel.
  We are therefore done by \lemref{lem:Torsion}.
 $\hfill\square$

 \vskip .3cm

\begin{lem}\label{lem:Torsion}
   Let $D \subset X$ be as in \corref{cor:Main-3-0} with $k$ algebraically closed
   and $X$ integral projective $R_1$-scheme. Then the canonical map
   \[
     \alpha_X \colon \CH^{LW}_0(X,D)_{\rm tor} \surj \CH^{LW}_0(X)_{\rm tor}
   \]
   is injective.
 \end{lem}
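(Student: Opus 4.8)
The plan is to deduce injectivity on torsion from the embedding $\Delta^*_X$ already constructed in the paper, together with a Roitman-type torsion theorem applied to the double $S_X$. Since $X$ is projective, the degree map is defined on all three Chow groups and every torsion class has degree zero, so it suffices to work inside the degree-zero subgroups. Thus let $z \in \CH^{LW}_0(X,D)_{\rm tor}$ with $\alpha_X(z) = 0$ in $\CH^{LW}_0(X)$; I must show $z = 0$. By \lemref{lem:Delta-1} and \lemref{lem:Proj-0} the composite $\iota^*_\pm \circ \Delta^*_X$ is the identity, so $\Delta^*_X \colon \CH^{LW}_0(X,D) \to \CH^{LW}_0(S_X)$ is split injective. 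Hence it is enough to prove that $\Delta^*_X(z) = 0$ in $\CH^{LW}_0(S_X)$, and $\Delta^*_X(z)$ is again a torsion class of degree zero.

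The second step is to detect $\Delta^*_X(z)$ by an Albanese map. Because $k$ is algebraically closed and $S_X$ is a reduced projective scheme, I invoke the Roitman-type torsion theorem for the Levine--Weibel (cohomological) Chow group: the Albanese homomorphism $\rho_{S_X} \colon \CH^{LW}_0(S_X)_{\deg 0} \to \Alb(S_X)$ is injective on torsion subgroups (see, e.g., \cite{Krishna-torsion}). Consequently it suffices to show that the Albanese class $\rho_{S_X}(\Delta^*_X(z))$ vanishes. This is exactly the point at which only the torsion statement, rather than injectivity of $\alpha_X$ outright, can be extracted: $\rho_{S_X}$ has a large kernel in general, and Roitman's theorem controls it only on torsion.

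To compute the class, I choose a degree-zero representative $\tilde z \in \sZ_0(X^o \setminus D)$ of $z$, so that $\Delta^*_X(z)$ is represented by the cycle $(\iota_+)_*(\tilde z) + (\iota_-)_*(\tilde z)$ supported on $(S_X)_\reg$. By functoriality of the Albanese construction for the closed immersions $\iota_\pm \colon X \inj S_X$, namely $a_{S_X} \circ \iota_\pm = \Alb(\iota_\pm) \circ a_X$ on regular loci, one obtains
\[
\rho_{S_X}(\Delta^*_X(z)) = \Alb(\iota_+)\big(\rho_X(\tilde z)\big) + \Alb(\iota_-)\big(\rho_X(\tilde z)\big).
\]
Now $\rho_X(\tilde z)$ depends only on the class of $\tilde z$ in $\CH^{LW}_0(X)_{\deg 0}$, which is $\alpha_X(z) = 0$; hence $\rho_X(\tilde z) = 0$ and the right-hand side vanishes. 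Therefore $\rho_{S_X}(\Delta^*_X(z)) = 0$, so $\Delta^*_X(z) = 0$ by the torsion-injectivity of $\rho_{S_X}$, and finally $z = 0$. Note that this argument sidesteps the fact that the na\"ive push-forward $(\iota_\pm)_* \colon \CH^{LW}_0(X) \to \CH^{LW}_0(S_X)$ is \emph{not} defined (functions with poles along $D$ cease to be invertible along $(S_X)_\sing \supset D$): the computation only uses the point-level functoriality of the Albanese morphism, not a push-forward on Chow groups.

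I expect the main obstacle to be the legitimacy of the Roitman input for $S_X$. First, $S_X$ is only reduced projective and, since $X$ is merely $R_1$, need not be Cohen--Macaulay, so I must ensure the cited torsion theorem applies in this generality, possibly after recording via \propref{prop:double*} the structural properties of $S_X$ that the theorem requires. Second, and more seriously, in characteristic $p$ the $p$-primary part of Roitman's theorem for singular projective varieties is the delicate ingredient; the argument above is immediate for the prime-to-$p$ torsion, while the $p$-torsion must be absorbed into whichever form of the singular Roitman theorem (including its $p$-part) is available over $k = \ov{k}$. The remaining ingredients --- the splitting of $\Delta^*_X$ and the point-level functoriality of the Albanese --- are formal.
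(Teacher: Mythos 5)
Your argument breaks down at the step labelled ``functoriality of the Albanese construction for the closed immersions $\iota_\pm$''. The identity $\rho_{S_X}(\Delta^*_X(z)) = \Alb(\iota_+)(\rho_X(\tilde z)) + \Alb(\iota_-)(\rho_X(\tilde z))$, with $\rho_X$ the universal regular (Albanese) homomorphism attached to $\CH^{LW}_0(X)_{\deg 0}$, presupposes a homomorphism $\Alb(\iota_\pm)\colon \Alb(X) \to \Alb(S_X)$ fitting into a commutative square with the pointwise Albanese maps. No such map exists in general: $\iota_\pm$ carries $D \cap X^o$ into $(S_X)_\sing$, so $a_{S_X}\circ\iota_\pm$ is only defined on $X^o\setminus D$, and the obstruction to factoring it through $a_X\colon X^o \to \Alb(X)$ is exactly the obstruction to the push-forward $(\iota_\pm)_*\colon \CH^{LW}_0(X)\to\CH^{LW}_0(S_X)$ that you correctly note does not exist. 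Concretely, by \thmref{thm:Main-1} the group $\CH_0(S_X)_{\deg 0}$ contains $\CH_0(X|D)_{\deg 0}$ as a direct summand, so $\Alb(S_X)$ contains the ``Albanese with modulus'' of $(X,D)$ (which has a nontrivial affine part as soon as $D$ is, say, non-reduced), and the composite $X^o\setminus D \xrightarrow{\iota_+} (S_X)_\reg \to \Alb(S_X)$ does not factor through $\Alb(X)$. What \lemref{lem:Delta-1} does give you is that $\rho_{S_X}\circ\Delta^*_X$ factors through $\CH^{LW}_0(X,D)_{\deg 0}$, hence through \emph{its} universal regular quotient $\Alb(X,D)$; upgrading this to a factorization through $\Alb(X)$ is essentially the assertion that $\Alb(X,D)\to\Alb(X)$ is injective on the relevant classes, which on torsion is the content of the lemma itself. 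So the decisive step is circular. (Your flagged worry about Roitman for $S_X$ is also real --- $S_X$ is reducible and, for $X$ merely $R_1$, need not be Cohen--Macaulay, and the $p$-primary statement is not available in that generality --- but it is secondary to the functoriality gap.)

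The paper proceeds quite differently: it never invokes an Albanese of $S_X$. It first settles surfaces via Bloch's formula (diagram~\eqref{eqn:Milnor-2}), then for $d\ge 3$ takes a torsion class killed by $\alpha_X$, spreads the witnessing relation $n\delta=\sum\divf(f_i)$ over a well-chosen complete intersection surface $Z$ inside a blow-up $\wt{X}$ of $X$ at smooth points (Bloch's trick plus the Bertini theorems of \cite{Ghosh-Krishna-Bertini}), applies the surface case to $Z$, and uses the Roitman theorem together with the hyperplane section theorem for Albanese varieties only to see that $\CH^{LW}_0(Z)_{\rm tor}\to\CH^{LW}_0(\wt{X})_{\rm tor}$ is an isomorphism. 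If you want to salvage your approach, you would need to prove directly that the universal regular quotients of $\CH^{LW}_0(X,D)_{\deg 0}$ and $\CH^{LW}_0(X)_{\deg 0}$ coincide, and that is not a formal fact.
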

 \begin{proof}
   If $X$ is a surface, ~\eqref{eqn:Milnor-2} already shows that $\alpha_X$
   is bijective. We can thus assume that $\dim(X) \ge 3$.
 Let ${\delta} \in \sZ_0(X^o \setminus D)$ be a
0-cycle such that $[\delta] \neq 0$ in $\CH^{LW}_0(X,D)$, while
$n[\delta] \ = \ 0$ for some $n > 1$. We
will show that ${\alpha}_X ([\delta]) \neq 0$.

Our assumption implies that there
exists a finite collection of integral curves (see \cite[Lemma~1.3]{Levine-2})
$C_i$ contained in $X^o$ which are Cartier along $D$ and functions 
$f_i \in k(C_i, C_i \cap D)^{\times}$ such that $n\delta = {\sum} \divf(f_i)$.
Using Bloch's trick (see \cite[Lemma~5.2]{Bloch-Duke}), we let
$\pi : \wt X \to X$ be the successive blow-up at smooth points such
that if $\wt{C}_i$ is the strict transform of $C_i$ on $\wt X$, then the following hold:
\begin{enumerate}
\item
Each $\wt{C}_i$ is smooth;
\item
$\wt{C}_i$'s are pairwise disjoint;
\item
Each $\wt{C}_i$ meets the exceptional divisor of $\pi$ transversely
at smooth points.
\end{enumerate}

We let $\wt{D} = \pi^*(D)$.
Let ${\delta}'$ be a 0-cycle on $(\bigcup \wt{C}_i) \setminus \wt{D}$
such that ${\pi}_{*}
({\delta}') = \delta$. Now, $f_i$ can be considered as an
element of $k(\wt{C}_i, \wt{C}_i \cap \wt{D})^{\times}$
and we get ${\pi}_{*}(\eta) = 0$ if we let
$\eta =  n{\delta}' - {\sum} \divf (f_i)_{\wt{C}_i}$. 
Thus we can find smooth 
complete rational curves $L_j$ lying in the exceptional divisor,
and rational functions $g_j$ on $L_j$ such that $\eta = \sum
\divf(g_j)_{L_j}$. This gives $n{\delta}' = \sum \divf(f_i)_{\wt{C}_i} +
\sum \divf(g_j)_{L_j}$. We can
choose $L_j$'s in such a way that the curve
$C' = (\bigcup \wt{C}_i)  \cup  (\bigcup L_j)$
is reduced with smooth components and only ordinary double point singularities
(see the proof of \cite[Theorem~6.7]{Krishna-Invent}).
Since the support of $\eta$ is disjoint from $\wt{D}$, we can choose $L_j$'s such that
they lie in the components of the exceptional divisor which map to some closed
points of $X^o \setminus D$ under $\pi$. In particular, $(\bigcup L_j) \cap (\wt{D} \cup
\wt{X}_\sing) = \emptyset$.

We can now apply \cite[Corollary~3.15]{Ghosh-Krishna-Bertini} to find a
complete intersection integral surface $Z \subset \wt{X}$ containing $C'$ such that
$Z \cap \pi^{-1}(X^o)$ is regular and $Z \cap \wt{X}_\sing \cong Z \cap X_\sing$ is
  finite. Since $C' \subset Z_\reg$, it follows that each irreducible component of
  $C'$ is a Cartier divisor on $Z$.
  Letting $E = \wt{D} \cap Z$ and $A = \wt{X}_\sing \cap Z$,
  it follows that $\delta'$ is an element of
  $\sZ_0(Z_\reg \setminus (\wt{D} \cup A))$ such that $n \delta'$ dies in
  $\CH^{LW}_0(Z,E;A)$, hence in $\CH^{LW}_0(Z,E)$.
  We conclude from the surface case that
  $\alpha_Z(\delta') \neq 0$ in $\CH^{LW}_0(Z)$.
  Since $\CH^{LW}_0(Z;A) \xrightarrow{\cong} \CH^{LW}_0(Z)$ by
  \cite[Lemma~2.3]{Ghosh-Krishna-CFT}, it follows that
  $\alpha_Z(\delta') \neq 0$ in $\CH^{LW}_0(Z;A)$.

We now consider the commutative diagram 
\begin{equation}\label{eqn:Torsion-0}
  \xymatrix@C.8pc{
    \CH^{LW}_0(Z,E;A) \ar[r]^-{\alpha_Z} \ar[d] & \CH^{LW}_0(Z;A) \ar[r] &
    \CH^{LW}_0(\wt{X})
    \ar[d]^-{\pi_*} \\
    \CH^{LW}_0(X,D) \ar[rr]^-{\alpha_X} & & \CH^{LW}_0(X).}
\end{equation}

By a combination of \cite[Lemma~4.3]{Gupta-Krishna-JAG},
  the Roitman torsion theorem for normal projective varieties
  \cite[Theorem~1.6]{Krishna-Srinivas} and the hyperplane section theorem for
  the Albanese varieties of normal projective varieties
  \cite[Chapter~8, Theorem~5]{Lang}, we see that $\CH^{LW}_0(\wt{X})_{\rm tor}$
  satisfies the hyperplane section theorem.
  This implies that the top right horizontal arrow in ~\eqref{eqn:Torsion-0} is
  an isomorphism between the torsion subgroups. On the other hand, it is easy to check
  that $\pi_*$ is defined and is an isomorphism
  because $\pi$ is a composition of monoidal transformations with centers away from
  $X_\sing$. 
  We conclude that
  $\alpha_X([\delta]) \neq 0$. This finishes the proof.
\end{proof}

\vskip .3cm
  
  We end with the following question.

  \begin{ques}\label{ques:ML-LW-0}
    Let $X$ be an integral quasi-projective scheme of dimension $d \ge 2$
    over an infinite field which is regular in codimension one and let
    $Y \subset X$ be a nowhere dense closed subscheme.
     Is the canonical map
    \[
      \CH^{LW}_0(X,Y) \to \CH^{LW}_0(X)
    \]
    an isomorphism?
  \end{ques}

  If the above question has a positive answer, then using the projective bundle
  and pro-$\ell$-extension tricks we used above,
  one can show that the same holds for the lci
  Chow groups over any field. In this case, we can replace
  $\CH_0(X,D)$ by $\CH_0(X)$ in \thmref{thm:Main-1}.
  In particular, we can replace $\CH_0(X,D)$ by $\CH_0(X)$ in
  \thmref{thm:Main-1} when $d \le 2$ using \corref{cor:Main-3-0}.

\vskip .4cm

\noindent\emph{Acknowledgements.}
Gupta was supported by the
SFB 1085 \emph{Higher Invariants} (Universit\"at Regensburg).
Rathore would like to thank the Mathematics department of IISc, Bangalore for invitation,
where part of this work was done. The authors thank the referees for their valuable
comments.

\end{document}